\newtheorem{thm}{Theorem}[section]
\newtheorem{defn}[thm]{Definition}
\newtheorem{prop}[thm]{Proposition}
\newtheorem{conj}[thm]{Conjecture}
\newtheorem{lem}[thm]{Lemma}
\newtheorem{cor}[thm]{Corollary}
\newtheorem{quest}[thm]{Question}
\theoremstyle{remark}
\newtheorem{rmk}[thm]{Remark}
\newtheorem{ex}[thm]{Example}
\newcommand{\B}[1]{\overline{#1}}
\newcommand{\C}{\mathbb{C}}
\newcommand{\R}{\mathbb{R}}
\newcommand{\rb}{\rangle}
\newcommand{\lb}{\langle}
\newcommand{\f}[1]{\mathfrak{#1}}
\newcommand{\bgm}[2]{\lb#1,#2\rb}
\newcommand{\HCFp}{\operatorname{HCF}_+}
\newcommand{\id}{\operatorname{Id}}
\newcommand{\tr}{\operatorname{tr}}
\newcommand{\ad}{\operatorname{ad}}
\title[$\HCFp$ on $\mathsf{SL}_n(\C)$ and perfect solitons]{Positive Hermitian Curvature Flow on special linear groups and perfect solitons}
\author{James Stanfield}
\begin{document}
	\maketitle
	\begin{abstract}
		We study invariant solutions to the Positive Hermitian Curvature Flow, introduced by Ustinovskiy, on complex Lie groups. We show in particular that the canonical scale-static metrics on the special linear groups, arising from the Killing form, are dynamically unstable. This disproves a conjecture of Ustinovskiy. We also construct certain perfect Lie groups that admit at least two distinct invariant solitons for the flow, only one of which is algebraic. This is the second known example of a geometric flow with non-algebraic, homogeneous solitons. The first being the G2-Laplacian flow.
	\end{abstract}
	\section{Introduction}
	\label{sec:intro}
	Suppose $(M,J,g)$ is a Hermitian manifold. Let $\Omega$ and $T$ be respectively the curvature and torsion of the Chern connection. The \emph{Positive Hermitian Curvature Flow} ($\HCFp$), as introduced by Ustinovskiy in \cite{UstPosHCF2019}, is a family of Hermitian metrics satisfying the initial value problem:
	\begin{equation}
		\label{eqn:hcfp}
	\partial_t g_t = -\Theta(g_t);\qquad g_0 = g,
	\end{equation}
	where
	\[
	\Theta(g)_{i\B j} := g^{k \B l}\Omega_{k \B l i\B j } - Q(T)_{i\B j},
	\]
	and
	\begin{equation}
		\label{eqn:Q}
	Q(T)_{i\B j} := \frac{1}{2}g_{k\B l}g^{r\B s}T_{ir}^k\B{T_{js}^l}.
	\end{equation}
	We call $\Theta(g)$ the \emph{torsion-twisted Chern--Ricci curvature}. The flow (\ref{eqn:hcfp}) is a member of a family of \emph{Hermitian Curvature Flows} (HCFs), where $Q(T)$ is taken to be an arbitrary tensor quadratic in $T$, introduced by Streets and Tian in \cite{StreetsAndTianHCF2011}, generalising the K\"ahler--Ricci flow to non-K\"ahler Hermitian manifolds(see \cite[~\S 3]{ustinovskiyThesis} and references therein). The particular choice of torsion term (\ref{eqn:Q}) was made by Ustinovskiy so that the flow (\ref{eqn:hcfp}) preserves curvature positivity conditions on the Chern connection, generalising a similar and very important property of the Ricci Flow that has proven essential in many of its applications (cf. \cite{BohmWilking2Pos2008,BrendleSchoenQuart2009,KRS,HamiltonThreeManifolds1982,HamiltonFourManifolds1986}). In particular, if $g_0$ is \emph{Griffiths non-negative} (resp. positive), i.e. $\Omega(X,JX,JY,Y) \geq 0$ (resp. $>0$) for all $X,Y\in \Gamma(TM)$, then $g_t$ is also Griffiths non-negative (resp. positive) while a solution exists \cite[~Theorem 0.1]{UstHCFHom2017}. Hence the name \emph{Positive} Hermitian Curvature Flow.
	
	%	Letting $Q(T)$ be an arbitrary tensor which is quadratic in the torsion yields a family of well-posed flows introduced by Streets and Tian in \cite{StreetsAndTianHCF2011} known as \emph{Hermitian Curvature Flows} (HCFs), which generalise the K\"ahler Ricci Flow to Hermitian manifolds. See \cite[~\S 3]{ustinovskiyThesis} and references therein for a survey on members of the HCF family currently being studied. 
	
	%Their first choice for $Q$ was made so that the HCF becomes the gradient flow for a certain functional. Subsequently in \cite{StreetsTianPluriclosed2010}, Streets and Tian chose $Q$ so that the flow preserves the pluriclosed condition: $\partial \B \partial g(J\cdot,\cdot) = 0$. This is now known as the \emph{pluriclosed flow}.
	
%	The particular choice of torsion term (\ref{eqn:Q}) was made by Ustinovskiy so that the flow (\ref{eqn:hcfp}) preserves curvature positivity conditions on the Chern connection, generalising a similar and very important property of the Ricci Flow that has proven essential in many of its applications (cf. \cite{BohmWilking2Pos2008,BrendleSchoenQuart2009,KRS,HamiltonThreeManifolds1982,HamiltonFourManifolds1986}). In particular, if $g_0$ is \emph{Griffiths non-negative} (resp. positive), i.e. $\Omega(X,JX,JY,Y) \geq 0$ (resp. $>0$) for all $X,Y\in \Gamma(TM)$, then $g_t$ is also Griffiths non-negative (resp. positive) while a solution exists \cite[~Theorem 0.1]{UstHCFHom2017}. Hence the name \emph{Positive} Hermitian Curvature Flow.
	
	It is hoped that (\ref{eqn:hcfp}) can be used as a tool to prove uniformisation results under positive curvature conditions in Hermitian geometry. For instance, the \emph{weak Campana-Peternell} conjecture proposed by Ustinovskiy:
	
	\begin{conj}[{\cite[~Conjecture 1.3]{UstStruct2020}}]
		\label{conj:wCP}
		Let $(M,g)$ be a compact Hermitian manifold with Griffiths non-negative Chern curvature and positive first Chern--Ricci curvature. Then $M$ is isomorphic to a rational homogeneous manifold $\mathsf{P}\backslash \mathsf{G}$, where $\mathsf{G}$ is a complex semisimple Lie group and $\mathsf{P}\leq \mathsf{G}$ is a parabolic subgroup.
	\end{conj}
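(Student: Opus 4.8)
The plan is to run $\HCFp$ and read off the biholomorphism type of $M$ from its singularity model, adapting the strategy used for the Ricci flow by Hamilton and Brendle--Schoen and for $\HCFp$ itself by Ustinovskiy in his proof of the Hermitian Mok theorem \cite{UstStruct2020}. Start the flow \eqref{eqn:hcfp} at $g_0$; by \cite[Theorem 0.1]{UstHCFHom2017} the solution $g_t$ stays Griffiths non-negative. The hypothesis on the first Chern--Ricci curvature of $g_0$ forces $-K_M$ to be ample, i.e.\ $M$ to be of Fano type, a property of $M$ insensitive to the flow; the point is to exploit it twice. First, arguing as for the K\"ahler--Ricci flow on Fano manifolds together with the curvature maximum principle of \cite{UstHCFHom2017}, the anticanonical positivity should force the flow to collapse: an appropriate trace of $\Theta(g_t)$ obeys, under Griffiths non-negativity, a reaction--diffusion inequality of the shape $\partial_t u \ge \Delta u + c\,u^2$, so its minimum blows up and the flow develops a singularity at a finite time $T$.

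Next comes the blow-up analysis: rescale parabolically about points and times realizing the maximal curvature. One needs (i) a Type-I (or at least polynomial) curvature bound near $T$; (ii) a non-collapsing estimate yielding a uniform lower bound on the injectivity radius; and (iii) a Cheeger--Gromov-type compactness theorem for $\HCFp$. Granting these, one extracts a complete, non-flat, Griffiths non-negative limiting $\HCFp$-solution which, by a Hamilton--Perelman-type rigidity argument, is self-similar, i.e.\ an $\HCFp$ soliton; moreover the Fano-type hypothesis, fed into the splitting theorem for the Chern connection of a Griffiths non-negative manifold, should prevent a flat de Rham factor and force the essential irreducible piece to be compact with positive Chern--Ricci curvature.

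The decisive step is then the classification of such singularity models: a complete, Griffiths non-negative $\HCFp$ soliton with positive Chern--Ricci curvature arising as a blow-up of a compact Fano-type Hermitian manifold should be the canonical metric on a rational homogeneous manifold $\mathsf{P}\backslash\mathsf{G}$. I would attempt this by showing that the soliton equation rigidifies the curvature tensor enough that the Chern connection has the holonomy of a compact Hermitian symmetric space of positive curvature, a Berger-type argument, and then invoking the classification of the latter. A backward-uniqueness / rigidity argument in the spirit of \cite{BrendleSchoenQuart2009} would finally transfer this rigidity from the limit back to $M$, yielding the asserted biholomorphism $M\cong\mathsf{P}\backslash\mathsf{G}$.

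The main obstacle is this soliton classification, and the results of the present paper are precisely a warning about it: the canonical scale-static metrics on $\mathsf{SL}_n(\C)$ are dynamically unstable, and even left-invariant solitons on Lie groups need not be algebraic, so one cannot expect a short classification and must instead use the extra hypotheses — Griffiths non-negativity, positive Chern--Ricci, being a blow-up of a compact manifold — very tightly to rule out the exotic models. Secondary difficulties are the absence of a Perelman-type monotone entropy for general Hermitian curvature flows, which is the standard route to non-collapsing and Type-I estimates, and the delicate interaction, for this non-K\"ahler flow, between the flow and the positivity of the various Chern--Ricci curvatures invoked above.
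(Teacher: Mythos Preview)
The statement you are attempting to prove is Conjecture~\ref{conj:wCP}, which is an \emph{open conjecture} quoted from \cite{UstStruct2020}; the paper does not prove it and contains no argument to compare your proposal against. Your write-up is not a proof but a research programme, and you yourself flag this: the soliton classification you need in the ``decisive step'' is not established anywhere, nor are the Type-I bound, the non-collapsing estimate, or the Cheeger--Gromov compactness for $\HCFp$ that you explicitly ``grant''. Each of these is a substantial open problem for non-K\"ahler Hermitian curvature flows, and without them the argument has no content beyond an analogy with the Ricci flow.

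More to the point, the results of this very paper undercut the plausibility of your strategy. Theorem~\ref{thm:SLnUnstable} shows that even on the model spaces $\mathsf{SL}_{n+1}(\C)$, the canonical static metric is dynamically unstable and nearby left-invariant $\HCFp$ solutions converge (after rescaling) to a soliton on $\mathsf{SL}_n(\C)\ltimes\mathsf{H}_{2n+1}(\C)$, which is \emph{not} a rational homogeneous manifold. The paper explicitly raises the question of whether the induced limit on a quotient $\mathsf{P}\backslash\mathsf{SL}_{n+1}(\C)$ is rational homogeneous and remarks that if the answer is negative, ``a proof of Conjecture~\ref{conj:wCP} via the $\HCFp$ would have to be subtle.'' Your proposal assumes precisely the opposite --- that blow-up limits are rational homogeneous --- without supplying any mechanism to rule out the exotic limits the paper constructs. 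Until that gap is closed, the approach remains speculative.
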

%	\begin{conj}[{\cite[~Conjecture 6.2]{ustinovskiyThesis}}]
%		\label{conj:wCP}
%		Let $M \not \cong \mathbb{CP}^n$ be a compact, irreducible, complex manifold. Suppose $M$ admits a Hermitian metric $g$ with Griffiths non-negative Chern curvature and positive first Chern--Ricci form. Then $(M,g)$ is isomorphic as a Hermitian manifold to $(\mathsf{P}\backslash\mathsf{G},\check h)$, where $\mathsf{G}$ is a simple complex Lie group, $\mathsf{P}\leq \mathsf{G}$ is a parabolic subgroup, and $h$ is a left-invariant Hermitian metric on $\mathsf{G}$.
%	\end{conj}

%	Rational homogeneous manifolds are particularly important objects in the theory of compact complex homogeneous manifolds. They are projective, and by a theorem of Tits, any connected compact homogeneous manifold fibres holomorphically over a rational homogeneous manifold, with complex parallelisable fibre \cite{Huckleberry90}.
	
	The $\HCFp$ has shown to be very fruitful in studying Hermitian manifolds satisfying similar assumptions to Conjecture \ref{conj:wCP}. In \cite{UstStruct2020}, Ustinovskiy studied the structure of $(M,g)$ when the assumption on the first Chern--Ricci curvature is dropped. They also proved in \cite{UstPosHCF2019} that if additionally, the Chern curvature is Griffiths positive at some point in $M$, then $M$ is biholomorphic to $\mathbb{CP}^n$ \cite[Propositon 0.3]{UstPosHCF2019}. The proof of the latter statement combines regularising properties of the $\HCFp$ with Mori's solution to Hartshorne's conjecture \cite{MoriProj1979}. A proof of this or Conjecture \ref{conj:wCP} using the asymptotic behaviour of the $\HCFp$ akin to \cite{KRS} in the K\"ahler setting would be interesting.
	
	%In \cite{UstPosHCF2019}, Ustinovskiy showed that if the positive first Chern--Ricci curvature assumption is dropped, and there exists at least one point in $M$ where the Chern curvature is strictly Griffiths positive, then $M$ is biholomorphic to $\mathsf{CP}^n$. Moreover in \textcolor{red}{ref},
	
	%The proof combined regularising properties of the $\HCFp$ and an algebraic result of Mori \textcolor{red}{ref}. 

	To gain insight into a potential proof of Conjecture \ref{conj:wCP} via the $\HCFp$, it is important to study the flow on manifolds that already have the expected symmetries. To that end, suppose now that $M = \mathsf{H}\backslash \mathsf{G}$ is a \emph{complex homogeneous manifold} (i.e. $\mathsf{G}$ is a complex Lie group and $\mathsf{H} \leq \mathsf{G}$ is a closed complex subgroup). In \cite{UstHCFHom2017}, Ustinovskiy showed that the $\HCFp$ preserves $\mathcal{M}^{\textrm{sub}}(M)$, the space of (typically not $\mathsf{G}$-invariant) submersion metrics on $M$ induced by left-invariant Hermitian metrics on $\mathsf{G}$. Moreover, they showed that an $\HCFp$ solution in $\mathcal{M}^{\textrm{sub}}(M)$ is determined by the corresponding left-invariant solution on $\mathsf{G}$.

	The simplest $\HCFp$ solutions come from \emph{$\HCFp$-static} metrics. A Hermitian metric $g$ is called $\HCFp$-static if $\Theta(g) = \lambda g$ for some $\lambda \in \R$. These are the equivalent of Einstein metrics in the Ricci flow setting. The only known non-K\"ahler examples exist on complex homogeneous manifolds of the form $\mathsf{H}\backslash\mathsf{G}$, where $\mathsf{G}$ is a semisimple complex Lie group. In this case, the metric is given by $\check g_{\operatorname{can}}$, where $g_{\operatorname{can}}$ is a canonical left-invariant metric on $\mathsf{G}$ arising from the \emph{Killing form} $\mathsf{G}$ (see Definition \ref{def:canonicalMetric} and Propositions \ref{prop:canonicalMetric} and \ref{prop:canonicalMetricStatic} for details).
	
	%$\HCFp$ solutions with initial conditions that are $\HCFp$-static evolve only by scaling. Thus, they are fixed points of the $\HCFp$ after normalisation. 

%	In \textcolor{red}{ref}, Ustinovskiy studied the $\HCFp$ on the space of \emph{submersion} metrics $\mathcal{M}^{\textrm{sub}}(\mathsf{H}\backslash\mathsf{G})$ on a \emph{complex homogeneous manifold} $\mathsf{H}\backslash\mathsf{G}$, which we now define. Recall For a left-invariant Hermitian metric $g$ on a complex Lie group $\mathsf{G}$, and a closed subgroup $\mathsf{H} \leq \mathsf{G}$, one can associate a unique metric $\check g$ on $\mathsf{H}\backslash\mathsf{G}$ such that the projection $\mathsf{G} \to \mathsf{H}\backslash\mathsf{G}$ is a Hermitian submersion. $\mathcal{M}^{\textrm{sub}}(\mathsf{H}\backslash\mathsf{G})$ is the set of all such metrics. Note that these are typically not $\mathsf{G}$-invariant. Remarkably, Ustinovskiy showed that $\mathcal{M}^{\textrm{sub}}(\mathsf{H}\backslash\mathsf{G})$ is preserved by the $\HCFp$, and in fact that the 

	In \cite{UstHCFHom2017}, working at the level of Lie algebras, Ustinovskiy made the following conjecture about left-invariant $\HCFp$ solutions on simple complex Lie groups:
	\begin{conj}[{\cite[~Conjecture 5.4]{UstHCFHom2017}}]
		\label{conj:Ust}
		Let $\mathsf{G}$ be a simple complex Lie group. Then after some rescaling, any left-invariant solution to (\ref{eqn:hcfp}) on $\mathsf{G}$ converges in the $C^\infty$ topology to $\phi^*g_{\operatorname{can}}$ for some inner automorphism $\phi$ of $\mathsf{G}$.
	\end{conj}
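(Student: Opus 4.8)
The statement just displayed is Ustinovskiy's conjecture, and --- as the abstract indicates --- the plan is not to confirm it but to \emph{disprove} it, by showing that on $\mathsf{G}=\mathsf{SL}_n(\C)$ (for suitable $n$) the metric $\check g_{\operatorname{can}}$ is \emph{not} an attractor for the normalised flow. The first step is to pass to a finite-dimensional picture. A left-invariant Hermitian metric on a complex Lie group $\mathsf{G}$ is precisely a positive $J$-Hermitian form on $\f g=\operatorname{Lie}(\mathsf{G})$; writing the Chern connection of such a metric explicitly in terms of $g$, $g^{-1}$ and the Lie bracket --- as is done at the Lie-algebra level in \cite{UstHCFHom2017} --- exhibits $\Theta(g)$ as a universal algebraic expression in these data, so that \eqref{eqn:hcfp} becomes an ODE on the (finite-dimensional) space of positive Hermitian forms on $\f g$. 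Since $\Theta$ is equivariant, $\Theta(\phi^*g)=\phi^*\Theta(g)$ for $\phi\in\operatorname{Aut}(\f g)$; after imposing a normalisation (e.g.\ fixed volume, or $\tr_{g_{\operatorname{can}}}g=\dim_\C\f g$) the $\HCFp$-static metric $\check g_{\operatorname{can}}$ of Proposition~\ref{prop:canonicalMetricStatic} becomes a genuine equilibrium of the normalised system.

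The second step is to locate an explicit low-dimensional invariant family through $g_{\operatorname{can}}$ and analyse the flow there. Fixing a maximal parabolic $\mathsf{P}\leq\mathsf{SL}_n(\C)$ yields a grading $\f{sl}_n(\C)=\f g_{-1}\oplus\f g_0\oplus\f g_1$ with $\f g_0=\f l$ the Levi subalgebra. By the equivariance above, those metrics that act as a scalar on each of $\f g_{-1}$, $\f g_1$, $[\f l,\f l]$ and the centre of $\f l$ --- equivalently, the metrics fixed by $\operatorname{Ad}$ of the semisimple part and of a compact form of the centre of $\mathsf L$ --- form a submanifold $\mathcal F$ preserved by the normalised $\HCFp$, on which the flow reduces to an explicit ODE in a handful of scale parameters $x_k>0$. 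The metric $g_{\operatorname{can}}$ is an interior equilibrium of this ODE; I would then compute the Jacobian of the reduced ODE at $g_{\operatorname{can}}$ and exhibit an eigenvalue with positive real part whose eigenvector is transverse to the one-parameter curve $\exp(tH)^*g_{\operatorname{can}}$ ($H$ in the centre of $\f l$), which already lies in $\mathcal F$ and accounts for every inner automorphism of $\mathsf G$ meeting $\mathcal F$. This positive eigenvalue is precisely what makes $\check g_{\operatorname{can}}$ dynamically unstable, contradicting the conjecture.

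I expect the difficulty to be twofold. First, one must obtain the reduced ODE \emph{exactly}: this requires evaluating both the Chern--Ricci endomorphism and the torsion quadratic $Q(T)$ of \eqref{eqn:Q} on the non-static metrics in $\mathcal F$, keeping careful track of the structure constants of $\f{sl}_n(\C)$ in a basis adapted simultaneously to the grading and to $g_{\operatorname{can}}$; it is essential that $Q(T)$ --- and not merely the Chern--Ricci part --- be computed correctly, since the sign of the destabilising eigenvalue turns on its contribution (this is where the specific choice \eqref{eqn:Q} of torsion term enters decisively). Second, one must upgrade ``positive eigenvalue of the linearised normalised flow at $\check g_{\operatorname{can}}$'' to the assertion that \emph{some} left-invariant $\HCFp$ solution has \emph{no} rescaling converging to \emph{any} $\phi^*g_{\operatorname{can}}$. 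For this one identifies the full inner-automorphism orbit $\operatorname{Ad}(\mathsf G)\cdot g_{\operatorname{can}}$ of $g_{\operatorname{can}}$ --- a copy of the symmetric space $\mathsf{SL}_n(\C)/\mathsf{SU}(n)$ --- checks that its intersection with $\mathcal F$ is exactly the curve $\exp(tH)^*g_{\operatorname{can}}$ above, and then runs an unstable-manifold argument: any initial metric on the local unstable manifold of $\check g_{\operatorname{can}}$ other than $\check g_{\operatorname{can}}$ itself generates a left-invariant $\HCFp$ solution which, after any normalisation, remains a definite distance from $\operatorname{Ad}(\mathsf G)\cdot g_{\operatorname{can}}$, contradicting Conjecture~\ref{conj:Ust}.
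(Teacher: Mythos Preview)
Your reduction to a finite-dimensional ODE on an $\operatorname{Ad}$-invariant family through $g_{\operatorname{can}}$ is exactly the paper's set-up (with $\f{sl}_{n+1}(\C)=\f{sl}_n(\C)\oplus\C I\oplus\f s$ and $\f s=\f g_{-1}\oplus\f g_1$ in your notation), but the plan to detect instability via a positive eigenvalue of the linearisation will not succeed. A direct computation of the Jacobian of the normalised planar system at the fixed point corresponding to $g_{\operatorname{can}}$ gives trace $-(n+3)$ and determinant $0$: the eigenvalues are $0$ and $-(n+3)$, so the equilibrium is non-hyperbolic with a \emph{centre} direction, not an unstable one. The extra parameter you introduce by allowing distinct scales on $\f g_1$ and $\f g_{-1}$ contributes only the automorphism direction $\exp(tH)^*g_{\operatorname{can}}$ you already accounted for, i.e.\ another zero eigenvalue. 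The paper explicitly remarks that $\mathsf{SL}_n(\C)$ was selected \emph{because} a centre eigenspace was found, and that ``the analysis is more subtle than simply computing the linearisation around $g_{\operatorname{can}}$''; your unstable-manifold argument therefore has no hyperbolic structure to invoke.

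What is actually required is a centre-manifold or global argument. The paper opts for the latter: it exhibits an explicit region $D\subset\R_{>0}^2$ whose closure contains both the static point $(1,1)$ and the degenerate point $(0,0)$, checks that the vector field points into $D$ along $\partial D$, observes that one coordinate is monotone on $D$, and concludes via Poincar\'e--Bendixson that every trajectory in $D$ converges to $(0,0)$. Since $D$ meets every neighbourhood of $(1,1)$, this produces solutions starting arbitrarily close to $g_{\operatorname{can}}$ that escape permanently. For your second difficulty the paper also proceeds differently: rather than estimating distance to the orbit $\operatorname{Ad}(\mathsf G)\cdot g_{\operatorname{can}}$, it passes to the varying-brackets viewpoint, shows the rescaled bracket converges to that of $\f{sl}_n(\C)\ltimes\f h_{2n+1}$, and applies Lauret's Cheeger--Gromov convergence theorem to obtain subconvergence to a soliton on a Lie group not isomorphic to $\mathsf{SL}_{n+1}(\C)$, which a fortiori rules out convergence to any $\phi^*g_{\operatorname{can}}$.
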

	
	Let ${\mathsf{SL}_{n}(\C)}$ be the complex special linear group consisting of complex $n\times n$ matrices of unit determinant. Ustinovskiy observed Conjecture \ref{conj:Ust} holds on ${\mathsf{SL}_{2}(\C)}$ \cite[~Example 5.2]{UstHCFHom2017}. Our main result is a counterexample to Conjecture \ref{conj:Ust} on $\mathsf{SL}_n(\C)$ for $n\geq 3$. We show in particular that $g_{\operatorname{can}}$ is dynamically unstable. Let $\mathsf{H}_{2n+1}(\C)$ denote the complex Heisenberg group of dimension $2n+1$ (see Example \ref{ex:heisenberg}). 
	\begin{thm}
		\label{thm:SLnUnstable}
		Let $n\geq 2$. Then on $\mathsf{SL}_{n+1}(\C)$, there exists a left-invariant solution $(g_t)_{t\in[0,T_{\max})}$ to (\ref{eqn:hcfp}) with $g_0$ chosen arbitrarily close to $g_{\operatorname{can}}$ in the $C^\infty$ topology such that as $t \to T_{\max}$, $(G,J,(T_{\max}-t)^{-1}g_t)$ converges in the Cheeger-Gromov sense to $(\B G,\B J,\B g)$, which is locally isometric to a shrinking $\HCFp$ soliton on $\mathsf{SL}_n(\C)\ltimes \mathsf{H}_{2n+1}(\C)$.
	\end{thm}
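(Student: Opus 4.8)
\medskip

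The plan is to reduce \eqref{eqn:hcfp} on $\mathsf{G}=\mathsf{SL}_{n+1}(\C)$ to a finite-dimensional dynamical system, to locate $g_{\operatorname{can}}$ in it as an equilibrium with an unstable direction, and then to recognise the singularity model of the resulting runaway trajectory through the bracket flow. Left-invariant Hermitian metrics on $\mathsf{G}$ are the same as $J$-Hermitian inner products on $\mathfrak{g}=\mathfrak{sl}_{n+1}(\C)$, and on a complex Lie group the Chern torsion is essentially the Lie bracket, so $\Theta$ is a concrete rational function of the inner product built from the structure constants of $\mathfrak{g}$; by Ustinovskiy's reduction it suffices to work at this Lie-algebraic level. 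I would fix the $\mathbb{Z}$-grading $\mathfrak{g}=\mathfrak{g}_{-1}\oplus\mathfrak{g}_0\oplus\mathfrak{g}_1$ induced by the maximal parabolic of type $(n,1)$, refine $\mathfrak{g}_0\cong\mathfrak{sl}_n(\C)\oplus\C H_0$, and restrict to the family of diagonal metrics $g_{a,b,c}$ obtained from $g_{\operatorname{can}}$ by scaling $\mathfrak{sl}_n(\C)$, $\C H_0$ and (symmetrically) $\mathfrak{g}_1\oplus\mathfrak{g}_{-1}$ by $a,b,c>0$. The $\ad$-invariance of each summand, together with the symmetry interchanging $\mathfrak{g}_1$ and $\mathfrak{g}_{-1}$, should make this family $\HCFp$-invariant, so that \eqref{eqn:hcfp} descends to an autonomous system of ODEs for $(a,b,c)\in\R^3_{>0}$, with $g_{\operatorname{can}}$ sitting at the balanced point $a=b=c$ (static by Proposition \ref{prop:canonicalMetricStatic}).

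Next comes the instability computation: linearising the reduced flow at $g_{\operatorname{can}}$ and exhibiting an eigenvalue of the ``wrong'' sign shows that $g_{\operatorname{can}}$ is not a local attractor and carries a nontrivial unstable manifold. Any initial metric $g_0$ on this unstable manifold, taken as close to $g_{\operatorname{can}}$ as we like, then produces a solution of the kind claimed. For such a solution I would show that the reduced trajectory leaves every neighbourhood of the equilibrium, hits the boundary of the parameter domain in finite time $T_{\max}$, and does so at the parabolic rate, so that the singularity is of Type I; establishing this requires a genuine analysis of the reduced phase portrait---monotone/Lyapunov quantities for the $(a,b,c)$-system and control of the asymptotics of $a/c$ and $b/c$ as $t\to T_{\max}$---rather than just the linearised picture.

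To identify $(\B G,\B J,\B g)$ I would adopt Lauret's bracket-flow viewpoint: transport all the $g_t$ to a single Hermitian vector space $(V,\bgm{\cdot}{\cdot},J)$, so that $\HCFp$ becomes a flow $\mu_t$ on the variety of complex Lie brackets on $V$ and Cheeger--Gromov subconvergence of $\bigl(\mathsf{G},J,(T_{\max}-t)^{-1}g_t\bigr)$ is equivalent to convergence of the parabolically rescaled brackets $\mu_t$ in $\Lambda^2 V^*\otimes V$. The degeneration of $(a,b,c)$ found above is exactly a (generalised) In\"on\"u--Wigner contraction of $\mathfrak{sl}_{n+1}(\C)$ along the grading: in the limit $[\mathfrak{g}_1,\mathfrak{g}_{-1}]$ keeps only its $\C H_0$-component and $[H_0,\mathfrak{g}_{\pm1}]$ vanishes, while the $\mathfrak{sl}_n(\C)$-action on $\mathfrak{g}_{\pm1}$ persists, so the limiting algebra is $\mathfrak{sl}_n(\C)\ltimes\mathfrak{h}_{2n+1}(\C)$ (the limit group may be a cover or quotient of $\mathsf{SL}_n(\C)\ltimes\mathsf{H}_{2n+1}(\C)$, hence only ``locally isometric''). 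That $\B g$ is a \emph{shrinking} $\HCFp$ soliton I would check directly on the contracted algebra, by verifying that the bracket-flow solution through $\mu_\infty$ with the limit metric evolves only by scaling composed with automorphisms---equivalently that $\Theta(\B g)=\lambda\B g$ up to a derivation of $\mathfrak{sl}_n(\C)\ltimes\mathfrak{h}_{2n+1}(\C)$---again from the structure-constant formula for $\Theta$.

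The main obstacle is not the linear instability, which is a finite computation, but the two parts of the global analysis. First, one must show that the unstable trajectory genuinely runs to a Type I singularity whose parabolic rescaling converges, which calls for the right monotonicity quantities for the reduced ODE and, for the Cheeger--Gromov step, uniform curvature and injectivity-radius bounds that upgrade algebraic bracket convergence to geometric convergence. Second, one must solve the soliton equation on $\mathfrak{sl}_n(\C)\ltimes\mathfrak{h}_{2n+1}(\C)$ on the nose and confirm that the solution is shrinking---a point made delicate by the non-unimodularity of that group, for which solitons need not be algebraic.
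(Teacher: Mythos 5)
Your overall route coincides with the paper's: the same three-parameter ansatz (your $(a,b,c)$-scaling of $\mathfrak{sl}_n(\C)\oplus\C H_0\oplus(\mathfrak{g}_1\oplus\mathfrak{g}_{-1})$ is exactly the paper's $\sigma_{x,y,z}$ acting on $\mathfrak{sl}_n(\C)\oplus\C I\oplus\mathfrak{s}$), reduction to an ODE, identification of the degeneration as a contraction to $\mathfrak{sl}_n(\C)\ltimes\mathfrak{h}_{2n+1}(\C)$ via Lauret's varying-bracket formalism, and Cheeger--Gromov convergence from bracket convergence. However, there is one concrete gap: your instability mechanism does not work as stated. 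Linearising the normalised system (\ref{eqn:yzODE}) at the static point $(y,z)=(1,1)$ gives the Jacobian $\begin{pmatrix}-(n+1)&2n\\ \frac{n+1}{n}&-2\end{pmatrix}$, which has trace $-(n+3)$ and determinant $0$: the eigenvalues are $0$ and $-(n+3)$. There is no eigenvalue of the ``wrong'' sign and hence no a priori nontrivial unstable manifold; the instability is a genuinely nonlinear phenomenon along a centre direction (this is precisely why the paper warns that the analysis is more subtle than the linearisation). The paper replaces your linear step by exhibiting an explicit forward-invariant region $D=\{z^2(n+1)/(z^2+n)\le y<1\}$ whose closure contains $(1,1)$, on which $y$ is monotone and Poincar\'e--Bendixson forces convergence to $(0,0)$. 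Your fallback of ``monotone/Lyapunov quantities'' is the right idea, but without producing such a region the argument is incomplete.

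Two smaller points. First, to see that the limit bracket retains exactly the Heisenberg relation $\mu_\infty(r_i,s_j)=c\,\delta_{ij}I$ with $c\ne 0$ (rather than the fully abelian contraction), one needs the precise asymptotic ratio $z^2/y\to\frac{n^2-2}{n(n+1)}$; you correctly flag that the relative rates of $b$ and $c$ must be controlled, but this ratio is the essential quantitative input and requires its own comparison argument (Lemma \ref{lem:asymyz}). Second, your worry about non-unimodularity of the limit is misplaced: $\mathfrak{sl}_n(\C)\ltimes\mathfrak{h}_{2n+1}(\C)$ is unimodular (the $\mathfrak{sl}_n(\C)$-action on $\mathfrak{s}$ is by two copies of the standard representation and is trivial on $\C I$, so all adjoint maps are traceless), and the limit soliton is in fact algebraic: $P_\infty=n\operatorname{id}-\frac1n D$ with $D=2\operatorname{id}_{\C I}+\operatorname{id}_{\mathfrak{s}}$ a symmetric derivation. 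The ``locally isometric'' caveat in the statement comes from the Cheeger--Gromov limit possibly being a cover/quotient, as you note, and is handled by \cite[Corollary 6.20]{lauretConvHomMfds}.
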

%	\begin{thm}
%		
%		For all $n \geq 2, k \geq 0,$ and $\varepsilon > 0$, there exists a left-invariant solution $(g_t)_{t\in[0,T_{\max})}$ to (\ref{eqn:hcfp}) on ${\mathsf{SL}_{n+1}(\C)}$ with $|g_{\operatorname{can}} - g_0|_{C^k(g_{\operatorname{can}})} < \varepsilon$ such that the rescaled Hermitian manifolds $(\mathsf{G},J,(T_{\max} - t)^{-1}g_t)$ converge as $t \to T_{\max}$ in the Cheeger--Gromov sense to a complex Lie group with left-invariant Hermitian metric $(\B {\mathsf{G}},\B J,\B g)$ locally isometric to a shrinking $\HCFp$ soliton on $\mathsf{SL}_n(\C)\ltimes \mathsf{H}_{2n+1}(\C)$.
%	\end{thm}
	
	By convergence in the Cheeger--Gromov sense, we mean for any sequence of times approaching $T_{\max}$, there exists a subsequence $\{t_k\}_{k=1}^
	\infty$, an exhaustion $\{e \in U_k \subset \B {\mathsf{G}}\}_{k=1}^\infty$ of $\B{ \mathsf{G}}$, and biholomorphisms $\varphi_k\colon U_k \to \varphi_k(U_k)\subset{\mathsf{SL}_{n+1}(\C)}$ satisfying $\varphi_k(e) = e$ and such that $\{(T_{\max}-t_k)^{-1}\varphi_k^*g_{t_k}\}_{k=1}^\infty$ converges to $\bar g$ in the $C^\infty$ topology. An $\HCFp$ soliton is a Hermitian manifold $(M,J,g)$ admitting a solution to (\ref{eqn:hcfp}) with $g_0 = g$ that evolves only by scaling and pull-back by biholomorphisms. A soliton is called shrinking (resp. expanding, steady) if the scaling factor is decreasing (resp. increasing, constant).
	
	Considering Theorem \ref{thm:SLnUnstable} in the context of Conjecture \ref{conj:wCP}, the following question is natural:
	\begin{quest}
	Consider a parabolic subgroup $\mathsf{P}\leq {\mathsf{SL}_{n+1}(\C)}$ and a left-invariant $\HCFp$ solution $(g_t)_{t\in T_{\max}}$ coming from Theorem \ref{thm:SLnUnstable}. After a suitable rescaling, is the Cheeger--Gromov limit of $(\mathsf{P}\backslash \mathsf{G},\check g_t)$ as $t\to T_{\max}$ a rational homogeneous manifold?
	\end{quest}

	If the answer is negative, as Theorem \ref{thm:SLnUnstable} suggests, a proof of Conjecture \ref{conj:wCP} via the $\HCFp$ would have to be subtle. Indeed one could not then expect a general Hermitian manifold satisfying the conditions of the conjecture to be deformed into one that is rational homogeneous.
	
	To prove Theorem \ref{thm:SLnUnstable}, we consider a three-parameter family of left-invariant and $\operatorname{Ad}(\mathsf{SU}(n))$-invariant Hermitian metrics on ${\mathsf{SL}_{n+1}(\C)}$, which is preserved by the $\HCFp$. The metric $g_{\operatorname{can}}$ is $\operatorname{Ad}(\mathsf{SU}(n+1))$-invariant, and so is a member of this family. After rescaling, the $\HCFp$ in this ansatz is equivalent to an ODE on $\R_{>0}^2$, and one can show that $g_{\operatorname{can}}$ is unstable. Note that the analysis is more subtle than simply computing the linearisation around $g_{\operatorname{can}}$, since we show that the escaping solutions actually never return. To show the convergence result, we make use of the technique of varying brackets first introduced by Lauret for almost-Hermitian Lie groups in \cite{lauret2015CurvatureFlows}. Specifically, we pull-back a rescaled $\operatorname{Ad}(\mathsf{SU}(n))$-invariant solution to a curve of Lie brackets in $\f{sl}_{n+1}(\C) \otimes \Lambda^2 \f {sl}_{n+1}(\C)^*$, which we show converges to the Lie bracket of a certain semi-direct product $\f{sl}_n(\C) \ltimes \f h_{2n+1}(\C)$. Cheeger--Gromov convergence follows by a technical result of Lauret \cite{lauretConvHomMfds}. 
	
%	We can then compute the torsion-twisted Chern--Ricci curvature of the corresponding Lie group with left-invariant Hermitian metric to show it is a soliton.
	
	Our second result concerns the uniqueness and properties of left-invariant $\HCFp$ solitons. A complex Lie group with left-invariant Hermitian metric $(\mathsf{G},g)$ is called a \emph{semi-algebraic} $\HCFp$ soliton if it is a soliton in the usual sense, and the biholomorphisms driving the evolution are also Lie group automorphisms of $\mathsf{G}$. A semi-algebraic soliton is called \emph{algebraic} if $\Theta(g)_e = \lambda g_e + g_e(D\cdot,\cdot)$ for some $\lambda \in \R$ and derivation $D \in \operatorname{Der}(\f g = \operatorname{Lie}(\mathsf{G}))$. Equivalent definitions of solitons for other geometric flows can be defined in the obvious way.
	
	It is known that all homogeneous solitons for the Ricci flow are in fact algebraic \cite{jabRicSolAlg14}, but the same question remains open for other flows in general. The author showed in \cite{stanfield21} that left-invariant $\HCFp$-solitons are always algebraic on nilpotent or almost-abelian complex Lie groups, as well as uniqueness up to homothety in the latter case. Moreover in \cite{pujia2020positive}, Pujia proved the same is true on two-step nilpotent Lie groups. Our second main result shows that neither of these properties hold for $\HCFp$-solitons in general.
%	\begin{thm}
%	Let $\mathsf{G} = {\mathsf{SL}_{n+1}(\C)}$, $n \geq 2$ be the universal cover of the complex special linear group. Then for every $\varepsilon > 0$, there exists a left-invariant solution $(g_t)_{t\in [0,T_{\max{}})}$ to (\ref{eqn:hcfp}) with $|g_0 - g^K|_{C^0(g^K)} < \varepsilon$ such that $(\mathsf{G},J,(T_{\max{}}-t)^{-1}g_t)$ converges as $t\to T_{\max{}}$ in the Cheeger--Gromov sense to a complex Lie group with left-invariant Hermitian metric which is locally, biholomorphically isometric to an the algebraic soliton $\HCFp$ soliton $({\mathsf{SL}_n(\C)}\ltimes H(\C),\B g)$.
%	\end{thm}
	\begin{thm}
		\label{thm:TwoSolitons}
		There exist perfect complex Lie groups $\mathsf{G}$ admitting at least two left-invariant, semi-algebraic $\HCFp$-solitons that are distinct up to homothety, only one of which is algebraic.
	\end{thm}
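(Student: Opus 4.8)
The plan is to exhibit an explicit family of complex Lie groups for which one can both write down two different invariant solitons and verify that exactly one of them is algebraic. Given the statement of Theorem \ref{thm:SLnUnstable} and the discussion preceding it, the natural candidate is a semi-direct product $\mathsf{G} = \mathsf{SL}_n(\C)\ltimes \mathsf{H}_{2n+1}(\C)$ (or a suitable small variant): it is perfect because $\mathsf{SL}_n(\C)$ is perfect and acts on $\f{h}_{2n+1}(\C)$ with $[\f{g},\f{g}] = \f{g}$, and it carries a rich but finite-dimensional space of $\operatorname{Ad}(\mathsf{SU}(n))$-invariant left-invariant Hermitian metrics. I would first set up this ansatz, parametrise the invariant metrics by a handful of real scaling parameters on the irreducible summands (the $\f{sl}_n(\C)$ factor, the two $n$-dimensional pieces of $\f{h}_{2n+1}(\C)$, and the center), and compute $\Theta(g)_e$ as a diagonal endomorphism with entries that are explicit rational functions of those parameters, using the structure constants of the semi-direct product together with the curvature/torsion formulas for left-invariant metrics on complex Lie groups.

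The soliton equation in this ansatz reduces, after the standard reduction for semi-algebraic solitons, to the algebraic system $\Theta(g)_e = \lambda\,\id + \tfrac12(D + D^*)$ on the metric, where $D$ ranges over derivations of $\f{g}$ preserving the $\operatorname{Ad}(\mathsf{SU}(n))$-decomposition; the key point is that $\operatorname{Der}(\f{g})$ for such a semi-direct product contains, besides the inner derivations, a torus of "scaling" derivations that act by distinct weights on the Heisenberg summands and the center, but these weights are constrained by the Jacobi identity (the bracket $\Lambda^2\f{h}^{(1)} \to \f{z}$ forces the center weight to be the sum of the weights on the two $n$-dimensional pieces). The second step is therefore to solve $\Theta(g)_e = \lambda\id + \tfrac12(D+D^*)$ allowing $D$ to be an \emph{arbitrary} symmetric endomorphism commuting with the $\mathsf{SU}(n)$-action — this is the semi-algebraic condition and it has a solution, which one checks is unique up to homothety — and then separately to ask whether that solution can be realised with $D$ an honest derivation. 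I expect the count to work out so that the semi-algebraic soliton requires a $D$ whose weights violate the center-weight constraint above, hence is \emph{not} algebraic, while a \emph{different} choice of metric parameters does admit an algebraic soliton with $D$ in the derivation torus; producing these two genuinely distinct (non-homothetic) solutions is the crux.

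The main obstacle is precisely the non-degeneracy/distinctness bookkeeping: one must (i) verify that the semi-algebraic soliton one writes down is genuinely not algebraic, which requires showing that \emph{no} derivation $D$ (not merely the obvious diagonal ones) can satisfy $\Theta(g)_e - \lambda\id = \tfrac12(D+D^*)$ — this uses that $\operatorname{Der}(\f{g})$ is itself computable for these Lie algebras, with the semisimple part $\f{sl}_n(\C)$ pinning down most of it and the solvable radical contributing only the constrained scaling torus plus nilpotent pieces that are symmetric-free — and (ii) confirm the two solitons are not related by a homothety, which amounts to comparing the induced endomorphisms $g^{-1}\Theta(g)$ (a scale-invariant quantity) on the two metrics and seeing the eigenvalue ratios differ. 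A secondary technical point is checking that the semi-algebraic soliton actually integrates to a self-similar $\HCFp$ solution, i.e. that the one-parameter family $\exp(tD)$ of automorphisms is well-defined on $\mathsf{G}$ and drives the flow; this follows from the general equivalence between the fixed-point equation $\Theta(g)_e = \lambda\id + \tfrac12(D+D^*)$ and the soliton property once $D \in \operatorname{Der}(\f{g})$, exactly as in the Ricci-flow and earlier $\HCFp$ literature. Once these are in place, Theorem \ref{thm:TwoSolitons} follows by exhibiting the two metrics explicitly on a single such $\mathsf{G}$.
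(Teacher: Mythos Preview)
Your proposal has a genuine gap, rooted in a confusion about the definitions and in the choice of ansatz.

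First, the definitions: a left-invariant metric is \emph{semi-algebraic} when $P_\mu = \lambda\id + \tfrac12(D+D^*)$ for some \emph{derivation} $D\in\operatorname{Der}(\f g)$; it is \emph{algebraic} when in addition $D^*\in\operatorname{Der}(\f g)$.  Your second step ``allowing $D$ to be an arbitrary symmetric endomorphism commuting with the $\mathsf{SU}(n)$-action --- this is the semi-algebraic condition'' is not the semi-algebraic condition; that equation is solvable for \emph{every} metric and carries no content.  Likewise, your heuristic ``the semi-algebraic soliton requires a $D$ whose weights violate the center-weight constraint, hence is not algebraic'' is self-defeating: if the weights violate the derivation constraint then $D\notin\operatorname{Der}(\f g)$, so the metric is not even semi-algebraic.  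What you must actually produce is a metric and a genuine $D\in\operatorname{Der}(\f g)$ solving the equation, together with a proof that no derivation whose \emph{adjoint} is also a derivation can do the job.

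Second, the ansatz: restricting to diagonal scalings of the irreducible summands of $\f{sl}_n(\C)\ltimes\f h_{2n+1}(\C)$ is unlikely to produce a non-algebraic soliton.  Indeed, the soliton on that group arising from Corollary~\ref{cor:limitBracketSLn} is algebraic, with $D=2\id_{\C I}+\id_{\f s}$ symmetric.  The mechanism that produces non-algebraic behaviour in the paper is \emph{off-diagonal}: one takes $\f g=\f h\ltimes_{\ad}\f h$ with $\f h$ any static complex Lie algebra (so the radical is abelian, not Heisenberg), and acts by the shear $h_{1,b}=\begin{psmallmatrix}1&b\\0&1\end{psmallmatrix}$ mixing the two copies of $\f h$.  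The conjugated derivation $D_b=h_{1,b}Dh_{1,b}^{-1}$ remains a derivation of the deformed bracket $\nu_b$, but for $b\neq 0$ its adjoint $D_b^*$ fails to preserve the nilradical and hence is not a derivation; at $b=\pm 2^{-1/4}$ one gets a bona fide semi-algebraic, non-algebraic soliton, while $b=0$ gives an algebraic one.  Distinctness up to homothety is then checked by comparing scale-invariant spectral data of $P_{\nu_b}$.  None of this off-diagonal structure appears in your plan, and without it the argument cannot close.
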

	Recall that a Lie group $\mathsf{G}$ is called \emph{perfect} if its Lie algebra satisfies $[\f g,\f g] = \f g$. To the author's knowledge, this distinguishes the $\HCFp$ as the second known example of a geometric flow with homogeneous solitons that are semi-algebraic, but not algebraic. The first is the $\mathsf{G}_2$-Laplacian flow which was studied in the homogeneous case by Lauret in \cite{lauretLaplacian}.
	
	The solitons in Theorem \ref{thm:TwoSolitons} lie in a particular $\R^\times \ltimes \R$ orbit of left-invariant Hermitian metrics on $\mathsf{G} = \mathsf{H} \ltimes \C^{\dim_{\C} \mathsf{H}}$, where $\mathsf{H}$ is any simply-connected complex Lie group admitting a left-invariant $\HCFp$-static metric (e.g. any simply-connected and semisimple complex Lie group). See section \ref{sec:perfSol} for details on the construction. In particular, note that if $\mathsf{H}$ is unimodular, then so is $\mathsf{G}$.
	
	Theorem \ref{thm:TwoSolitons} therefore demonstrates that the theory of the $\HCFp$ differs substantially to that of others in the HCF family. In \cite{HCFUni2019}, Lafuente, Pujia, and Vezzoni studied the first version of the HCF introduced by Streets and Tian in \cite{StreetsAndTianHCF2011} as the gradient flow of a particular functional. They showed (in contrast to Theorem \ref{thm:TwoSolitons}) that on complex unimodular Lie groups, left-invariant semi-algebraic solitons are algebraic and are unique up to homothety.
	
	Several members of the HCF family are actively being studied in the homogeneous case. We refer the reader to \cite{arrLafHomPlu19,bolingPCFLocHom,pluriclosedTwoStep2015,fino2021pluriclosed,HCFUni2019,PanelliPodestaHCFCmpctHom2020,Pediconi2020,PujiaSolitons2019,pujia2020positive,stanfield21,UstHCFHom2017}.
	
%	For the `gradient' HCF, we refer the reader to  \cite{HCFUni2019,Pediconi2020,PujiaSolitons2019}, for the Pluriclosed Flow, to \cite{arrLafHomPlu19,bolingPCFLocHom,pluriclosedTwoStep2015,fino2021pluriclosed}, and finally for the $\HCFp$, to \cite{PanelliPodestaHCFCmpctHom2020,pujia2020positive,stanfield21,UstHCFHom2017}.
	
	The rest of the article is structured as follows: In section \ref{sec:prelims}, we discuss preliminaries, including $\HCFp$-static metrics and solitons. In section \ref{sec:SLn}, we set up and prove Theorem \ref{thm:SLnUnstable}. Finally, Section \ref{sec:perfSol} contains the proof of Theorem \ref{thm:TwoSolitons}.
	
	\subsubsection*{Acknowledgements}
	I am immensely grateful to my advisor Ramiro Lafuente for his guidance and feedback throughout this project. I also wish to thank Timothy Buttsworth for helpful comments on a final draft of this article. This work was supported by an Australian Government Research Training Program (RTP) Scholarship.
	\section{Preliminaries}
		\label{sec:prelims}
	\subsection{Notation}
	Throughout this article, $(\mathsf{G},J)$ denotes a complex Lie group. That is, $\mathsf{G}$ is some even dimensional Lie group and $J$ is a bi-invariant complex structure on $\mathsf{G}$. Denote by $e \in \mathsf{G}$ the identity and $\f g \cong T_e\mathsf{G}$ the Lie algebra of $\mathsf{G}$ .

	\subsection{Positive Hermitian Curvature Flow}
	Let $(\mathsf{G}^{2n},J,g)$ be a simply-connected complex Lie group with left-invariant Hermitian metric $g$. Then biholomorphism invariance of $\Theta$ implies that it is also left-invariant, thus determined by a bilinear form on the Lie algebra $\f g$ of $\mathsf{G}$. Let $\f g^{1,0} := \f g \otimes \C \cap T^{1,0}_e\mathsf{G}$ and suppose $\{Z_i\}_{i=1}^n \subset \f g^{1,0}$ is a left-invariant $g$-unitary frame. Then by \cite[~Proposition 1.1]{HCFUni2019},
	\begin{equation}
		\label{eqn:Theta}
	\Theta(g)(Z_i,Z_{\B j}) = \frac{1}{2}g(Z_i,[Z_{\B k},Z_{\B l}])\cdot g([Z_k,Z_l],Z_{\B j}),
	\end{equation}
	where we sum over repeated indices. Note that since $(\mathsf{G},J)$ is a complex Lie group, $[Z_i,Z_{\B j}] = 0$.
	
	Since $\Theta(g)$ is biholomorphism invariant, a left-invariant $\HCFp$ solution on $(G,J)$ is equivalent to an ODE of Hermitian inner products on its Lie algebra $(\f g,J_e)$. Hence, short-time existence and uniqueness among left-invariant solutions immediate.
	\subsection{Varying brackets}
	To examine solitons and the limiting behaviour of the $\HCFp$ on Lie groups, we will utilize the technique of varying brackets. This idea was introduced in \cite{lauret2015CurvatureFlows} for almost-Hermitian Lie groups.
	
	Let $(\mathsf{G},J,g)$ be a simply-connected complex Lie group with left-invariant Hermitian metric $g$. This is determined uniquely by the infinitesimal data $(\f g , [\cdot , \cdot],J, \bgm{\cdot}{\cdot})$, where $\f g$ is a $2n$-dimensional vector space, $J$ is a linear complex structure on $\f g$, $[\cdot,\cdot]$ is a Lie bracket on $\f g$ satisfying $[J\cdot,\cdot] = J[\cdot,\cdot]$ and $\bgm{\cdot}{\cdot}$ is a $J$-Hermitian inner product on $\f g$. The Lie group
	\[
	\mathsf{GL}(\f g,J) := \{A \in \mathsf{GL}(\f g): [J,A] = 0\} \cong \mathsf{GL}_n(\C),
	\]
	acts naturally on two spaces associated to $(\f g, J)$. The first is
	\[
	\mathcal{C}(\f g,J) := \{\mu \in \f g \otimes \Lambda^2 \f g^* \textrm{ satisfying the Jacobi identity and } \mu(J\cdot,\cdot) = J\mu\},
	\]
	the \emph{variety of complex Lie brackets} on $\f g$. The second is
	\[
	\operatorname{Sym}^2_+(\f g,J):= \{\textrm{symmetric and positive definite }k \in \f g^* \otimes \f g^* : k(J\cdot,J\cdot) = k\},
	\]
	the space of Hermitian inner products on $(\f g,J)$. The actions are given respectively by
	\[
	h\cdot \mu := h\mu(h^{-1}\cdot,h^{-1}\cdot), \qquad h\cdot k := k(h^{-1}\cdot,h^{-1}\cdot);\qquad \mu\in \mathcal{C}(\f g,J),k \in \operatorname{Sym}^2_+(\f g,J).
	\]
	With this set up, the map $h\colon (\f g,[\cdot,\cdot],J,h^{-1}\cdot\bgm{\cdot}{\cdot}) \to (\f g,h \cdot[\cdot,\cdot],J,\bgm{\cdot}{\cdot})$ is an isomorphism of complex, metric Lie algebras. Since the action of $\mathsf{GL}(\f g,J)$ on $\operatorname{Sym}^2_+(\f g,J)$ is transitive, any complex metric Lie algebra $(\f g,[\cdot,\cdot],J,g)$ is equivalent to $(\f g,h \cdot[\cdot,\cdot],J,\bgm{\cdot}{\cdot})$ for some $h \in \mathsf{GL}(\f g,J)$ both as a complex Lie algebra and $J$-Hermitian inner product space. Thus, the space of $J$-Hermitian inner products on $(\f g,[\cdot,\cdot],J)$ - and hence the space of left-invariant Hermitian metrics on the corresponding simply-connected complex Lie group $(\mathsf{G},J)$- is parametrised by the orbit $\mathsf{GL}(\f g,J)\cdot [\cdot,\cdot]\subset \mathcal{C}(\f g ,J)$.
	
	We now wish to pull-back $\HCFp$ solutions to the orbit $\mathsf{GL}(\f g,J)\cdot [\cdot,\cdot]$ using the above correspondence. Let 
	\[
	\f{gl}(\f g,J) := \operatorname{Lie}(\mathsf{GL}(\f g,J)) = \{A \in \operatorname{End}(\f g) : [A,J] = 0\}
	\] be the Lie algebra of $\mathsf{GL}(\f g,J)$. Define 
	\[
	\mathsf{U}(\f g,J,\bgm{\cdot}{\cdot}) := \{h \in \mathsf{GL}(\f g,J) : h\cdot\bgm{\cdot}{\cdot} = \bgm{\cdot}{\cdot}\},
	\] the unitary group associated to the $J$-Hermitian inner product $\bgm{\cdot}{\cdot}$ and let \[
	\f{u}(\f g,J,\bgm{\cdot}{\cdot}) := \operatorname{Lie}(\mathsf{U}(\f g,J,\bgm{\cdot}{\cdot})) = \{A \in \f{gl}(\f g,J) : \bgm{A\cdot}{\cdot } + \bgm{\cdot}{A\cdot} = 0\}
	\] be its Lie algebra. For each $A \in \f{gl}(\f g,J)$ and $\mu \in \mathsf{GL}(\f g,J)\cdot [\cdot,\cdot]$, define 
	\[
		\pi(A)\mu := \mathrm{d}(h \mapsto h\cdot \mu)_{\id}A = A\mu - \mu(A\cdot,\cdot)-\mu(\cdot,A\cdot) \in T_\mu\mathsf{GL}(\f g,J)\cdot [\cdot,\cdot].
	\]
	
	Finally, given a complex Lie algebra $(\f g,\mu,J,g)$ with Lie bracket $\mu \in \mathcal{C}(\f g,J)$ and $J$-Hermitian inner product $g \in \operatorname{Sym}^2_+(\f g,J)$, define $P_\mu^g \in \f{gl}(\f g,J)$ to be the $g$-Hermitian endomorphism satisfying $g(P_\mu^g\cdot,\cdot) := \Theta_\mu(g)$, where $\Theta_\mu(g)$ is the torsion-twisted Chern--Ricci form associated to the simply-connected complex Lie group with left-invariant metric defined by the data $(\f g,\mu,J,g)$. Equation (\ref{eqn:Theta}) implies that for any left-invariant unitary frame $\{Z_i\}_{i=1}^n \subset \f g^{1,0}$,
	\begin{equation}
		\label{eqn:TTCR}
	P^g_\mu = \frac{1}{2} \sum_{i,j} g(\cdot,\mu(\B{Z_i},\B{Z_j}))\mu(Z_i,Z_j) = \frac{1}{2}\sum_{i} \operatorname{ad}_{Z_i}\circ \operatorname{ad}_{Z_i}^*,
	\end{equation}
	where $(\cdot)^*$ denotes the adjoint with respect to the Hermitian inner product $g$. When there is no confusion about the inner product being considered, we will write $P_{\mu} := P_\mu^g$. 
%	The $\HCFp$ can be reinterpreted as a curve in $\mathcal{C}(\f g,J)$ using the next result.
%	\begin{thm}[{\cite[~Theorem 2.3]{arrLafHomPlu19}}]
%		Let $(\mathsf{G},J,g)$ be a complex simply-connected Lie group with Hermitian metric $g$ and denote by $(\f g,[\cdot,\cdot],J,\bgm{\cdot}{\cdot} := g|_e)$ the corresponding infinitesimal data. Suppose $(\mathsf{G},J,g_t)_{t\in[0,t_{\max})}$ is the left-invariant solution to the $\HCFp$ with $g_0 = g$. Let $\mathcal{C}\ni \mu \to S_\mu \in \f{u}(\f g,J,\bgm{\cdot}{\cdot})$ be any smooth map. Then there exists a unique solution $\mu\colon [0,T_{\max}) \to \mathsf{GL}(\f g,J)\cdot [\cdot,\cdot]$ to the initial value problem
%		\begin{equation}
%			\label{eqn:gaugedBracketFlow}
%		\dot\mu_t = -\pi(P_{\mu_t}-S_{\mu_t})\mu_t, \qquad \mu_0 = [\cdot,\cdot].
%		\end{equation}
%		Moreover, the simply-connected complex Lie group with left-invariant Hermitian metric $(\mathsf{G}_{\mu_t},J_{\mu_t},g_{\mu_t})$ determined by the data $(\f g,\mu_t,J,\bgm{\cdot}{\cdot})$ is equivariantly, biholomorphically, isometric to $(\mathsf{G},J,g_t)$ for all $t\in [0,T_{\max})$.
% 	\end{thm}
% 	We call (\ref{eqn:gaugedBracketFlow}) the \emph{gauged bracket flow}. Its key advantage is in the detection of \emph{solitons}, which we discuss in the next section.
 	
 	The group $\mathsf{GL}(\f g,J)$ also acts naturally on the space $\f {gl}(\f g,J)$ via the adjoint action given by $h\cdot A = hAh^{-1}$ for all $h\in \mathsf{GL}(\f g,J), A \in \f{gl}(\f g,J)$. Biholomorphism invariance of $\Theta$ yields the following Lemma:
 	\begin{lem}
 		\label{lem:PEquiv}
 		For all $h\in \mathsf{GL}(\f g,J)$, $\mu \in \mathcal{C}(\f g,J)$ and $g \in \operatorname{Sym}^2_+(\f g,J)$, it holds that $h \cdot P_{\mu}^g = P^{h\cdot g}_{h\cdot \mu}$.
 	\end{lem}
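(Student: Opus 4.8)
The plan is to reduce the identity to the biholomorphism invariance of $\Theta$, using the observation recorded just before the definition of $P^g_\mu$ that $h$ is an isomorphism of complex metric Lie algebras $h\colon (\f g,\mu,J,h^{-1}\cdot g)\to(\f g,h\cdot\mu,J,g)$ (this is the displayed isomorphism in the ``varying brackets'' subsection, with $[\cdot,\cdot]$ replaced by the general bracket $\mu$ and the inner product relabelled). First I would check directly that $h$ intertwines the brackets, $(h\cdot\mu)(hX,hY)=h\,\mu(X,Y)$, and is an isometry, since $(h^{-1}\cdot g)(X,Y)=g(hX,hY)$. Integrating $h$ to an isomorphism of the associated simply-connected complex Lie groups — which is then a biholomorphic isometry pulling back $g$ to $h^{-1}\cdot g$ — the naturality of the torsion-twisted Chern--Ricci form under biholomorphisms gives
\[
\Theta_{h\cdot\mu}(g)(hX,hY)=\Theta_\mu(h^{-1}\cdot g)(X,Y)\qquad\text{for all }X,Y\in\f g .
\]

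Next I would translate this into a statement about the endomorphisms $P$. Expanding the left-hand side via the defining relation $g(P^g_{h\cdot\mu}\,\cdot,\cdot)=\Theta_{h\cdot\mu}(g)$ and the right-hand side via $(h^{-1}\cdot g)(P^{h^{-1}\cdot g}_\mu\,\cdot,\cdot)=\Theta_\mu(h^{-1}\cdot g)$ together with $(h^{-1}\cdot g)(X,Y)=g(hX,hY)$, the identity above becomes
\[
g\big(P^g_{h\cdot\mu}\,hX,\,hY\big)=g\big(h\,P^{h^{-1}\cdot g}_\mu\,X,\,hY\big)\qquad\text{for all }X,Y\in\f g .
\]
Since $g$ is nondegenerate and $h$ invertible, this forces $P^g_{h\cdot\mu}=h\,P^{h^{-1}\cdot g}_\mu\,h^{-1}$. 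Finally, substituting $g\mapsto h\cdot g$ and using $h^{-1}\cdot(h\cdot g)=g$ yields $P^{h\cdot g}_{h\cdot\mu}=h\,P^g_\mu\,h^{-1}=h\cdot P^g_\mu$, as claimed.

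As a cross-check, and to keep the argument self-contained, I would also verify the identity directly from formula \eqref{eqn:TTCR}. If $\{Z_i\}_{i=1}^n\subset\f g^{1,0}$ is a $g$-unitary frame then $\{hZ_i\}$ is $(h\cdot g)$-unitary, and $\operatorname{ad}^{h\cdot\mu}_{hZ_i}=h\,\operatorname{ad}^\mu_{Z_i}\,h^{-1}$; a short computation using $(h\cdot g)(X,Y)=g(h^{-1}X,h^{-1}Y)$ shows that the $(h\cdot g)$-adjoint of $h\,A\,h^{-1}$ equals $h\,A^{*}\,h^{-1}$, where $A^{*}$ denotes the $g$-adjoint. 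Plugging these into \eqref{eqn:TTCR} gives $P^{h\cdot g}_{h\cdot\mu}=\tfrac12\sum_i (h\,\operatorname{ad}^\mu_{Z_i}\,h^{-1})(h\,(\operatorname{ad}^\mu_{Z_i})^{*}\,h^{-1})=h\,P^g_\mu\,h^{-1}$. The only genuinely delicate point in either route is keeping the two conventions straight — the geometric pullback $b\mapsto b(h\cdot,h\cdot)$ versus the algebraic action $g\mapsto g(h^{-1}\cdot,h^{-1}\cdot)$ — and making sure the direction of the isomorphism $h$ matches the direction in which $\Theta$ is natural; there is no analytic content, the lemma being essentially a naturality statement, and I would sanity-check the placement of inverses by specialising to a homothety $h=c\,\id$.
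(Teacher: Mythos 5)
Your proposal is correct, and its main argument is exactly the route the paper takes: the paper offers no more than the remark that biholomorphism invariance of $\Theta$ yields the lemma, and your first paragraph is a careful unwinding of precisely that remark via the isometric isomorphism $h\colon (\f g,\mu,J,h^{-1}\cdot g)\to(\f g,h\cdot\mu,J,g)$, with the inverses placed consistently with the paper's conventions. The direct verification from \eqref{eqn:TTCR} is a sound (if redundant) cross-check.
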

 	
	\subsection{$\HCFp$-solitons}
	
	An $\HCFp$-soliton is a Hermitian manifold $(M,J,g)$ satisfying
	\[
	\Theta(g) = \lambda g + \mathcal{L}_Zg,
	\]
	for some $\lambda \in \R$ and holomorphic vector field $Z \in \Gamma(M,T^{1,0}M)$. In this case, there exists a solution $(g_t)_{t\in I}$ to the $\HCFp$ (\ref{eqn:hcfp}) with $g_0 = g$ given by $g_t = (1-\lambda t)\varphi_t^*g$ for some family of biholomorphisms $(\varphi_t)_{t\in I}$. Solitons therefore correspond to self-similar solutions of the $\HCFp$.
	
	On a Lie group, we can consider solitons with more structure. First, define
	\[
		\mathsf{Aut}(\f g,J,\mu) := \{h\in \mathsf{GL}(\f g,J) : h\cdot \mu = \mu\},
	\]
	as the Lie group of Lie algebra automorphisms of $(\f g,J)$. Let \[
		\operatorname{Der}(\f g,J,\mu) := T_{\id}\mathsf{Aut}(\f g,J,\mu) = \{D\in\f{gl}(\f g,J) : \pi(D)\mu = 0\},
	\]
	be its Lie algebra. When the bracket and complex structure are clear, we will write $\operatorname{Der}(\f g) = \operatorname{Der}(\f g,\mu) = \operatorname{Der}(\f g,\mu,J)$. A simply-connected complex Lie group $(\mathsf{G},J,g)$ with left-invariant Hermitian metric $g$ is called a \emph{semi-algebraic} soliton if
	\[
	\Theta(g) = \lambda g + \frac{1}{2}\left(g(D\cdot,\cdot) + g(\cdot , D\cdot)\right),
	\]
	for some $D \in \operatorname{Der}(\f g)$. The left-invariant solution corresponding to an algebraic soliton is a soliton in the usual sense where the biholomorphisms driving the evolution are in addition Lie group automorphisms of $(\mathsf{G},J)$.
	Equivalently, $(\mathsf{G},J,g)$ is a semi-algebraic soliton if its infinitesimal data $(\f g,J,\mu,g)$ satisfies
	\[
	P_\mu^g = \lambda \id_{\f g} + \frac{1}{2}(D + D^*),
	\]
	for some $D\in \operatorname{Der}(\f g,\mu)$, where $(\cdot)^*$ denotes the $g$-adjoint. Finally, a semi-algebraic soliton is called \emph{algebraic} if $D^* \in \operatorname{Der}(\f g,\mu)$.
	
%	(Semi-)Algebraic solitons are easily detected by the (gauged-)bracket flow.
%	
%	
%	\begin{prop}[{\cite[~Proposition 2.9]{arrLafHomPlu19}}]
%		The simply-connected complex Lie group with left-invariant Hermitian metric corresponding to the data $(\f g,J,\mu,g)$ is a semi-algebraic soliton if and only $-\pi(P_{\mu}^g-S_{\mu})\mu = \lambda \mu$ for some choice of gauge map $S$ and $\lambda \in \R$. That is, $\mu$ is a scale-static solution to the gauged bracket flow (\ref{eqn:gaugedBracketFlow}). In this case, it is algebraic if and only if $S = 0$.
%	\end{prop}
%
	\begin{ex}\label{ex:heisenberg}
		Consider the \emph{complex Heisenberg group}:
		\[
				\mathsf{H}_{2n +1}(\C) := \begin{pmatrix}
				1&(\C^n)^*&\C\\0&\id_n&\C^n\\
				0&0&1
				\end{pmatrix}.
			\]
		Any left-invariant metric $g$ on $\mathsf{H}_{2n+1}(\C)$ is an algebraic soliton. To see this, let $\f{h}_{2n+1} := \operatorname{Lie}(H_{2n+1}(\C))$. Then there exists $Z \in \f{h}_{2n+1}$ such that $[\f h_{2n+1},\f h_{2n+1}] = \C Z$ and $[Z,\f h_{2n+1}] = 0$. Let $V = (\C Z)^{\perp_{g_e}}$. Then for all $X\in \f h_{2n+1}$, $g(P_{[\cdot,\cdot]}^gX,X) = C|g(X ,Z)|^2$. Thus, $P_{[\cdot,\cdot]}^g = C\id_{\C Z} = -C\id_{\f h_{2n+1}} + C(2\id_{\C Z} + \id_V)$. It is straight forward to verify that $2\id_{\C Z} + \id_V$ is a derivation of $\f{h}_{2n+1}$.
	\end{ex}

	A Hermitian metric $g$ is called $\HCFp$-\emph{static} if
	\[
	\Theta(g) = \lambda g.
	\]
	for some $\lambda \in \R$. Such metrics are the $\HCFp$ equivalent of Einstein metrics in the Ricci flow setting, in that they correspond to solutions that evolve only by scaling.

	The following necessary condition for the existence of left-invariant $\HCFp$-static metrics on complex Lie groups is known.
	
	\begin{prop}[{\cite[~Proposition 2.5]{stanfield21}}]
		\label{thm:staticImpPerf}
		Any complex Lie group admitting a left-invariant $\HCFp$-static metric is perfect.
	\end{prop}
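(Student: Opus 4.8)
The plan is to read perfectness straight off the algebraic expression $(\ref{eqn:TTCR})$ for the torsion-twisted Chern--Ricci operator, by identifying the image of $P_\mu^g$. Since perfectness is a property of the Lie algebra alone, I would first pass to the simply-connected cover and argue entirely at the infinitesimal level with the data $(\f g,\mu,J,g)$, where $\mu=[\cdot,\cdot]$. Because $g(P_\mu^g\cdot,\cdot)=\Theta_\mu(g)$ by definition of $P_\mu^g$, the static condition $\Theta(g)=\lambda g$ is equivalent to the endomorphism equation $P:=P_\mu^g=\lambda\,\id_{\f g}$ for some $\lambda\in\R$, and it is this equation that I will exploit.

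Next I would record that $P$ is positive semi-definite, so in particular $\lambda\ge 0$: fixing a $g$-unitary frame $\{Z_i\}\subset\f g^{1,0}$, equation $(\ref{eqn:TTCR})$ gives $P=\tfrac12\sum_i\ad_{Z_i}\circ\ad_{Z_i}^{*}$, whence $g(Pv,v)=\tfrac12\sum_i\|\ad_{Z_i}^{*}v\|^2\ge 0$ for all $v$. Thus $P$ is a sum of the positive semi-definite Hermitian operators $B_i:=\ad_{Z_i}\ad_{Z_i}^{*}$.

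The heart of the matter is the identification $\operatorname{Im}(P)=[\f g,\f g]$. For each $i$, the identity $g(B_iv,v)=\|\ad_{Z_i}^{*}v\|^2$ gives $\ker B_i=\ker(\ad_{Z_i}^{*})$, and since $B_i$ is Hermitian, $\operatorname{Im}(B_i)=(\ker B_i)^{\perp}=(\ker\ad_{Z_i}^{*})^{\perp}=\operatorname{Im}(\ad_{Z_i})=[Z_i,\f g^{1,0}]$. As the $B_i$ are positive semi-definite, $\ker\bigl(\sum_iB_i\bigr)=\bigcap_i\ker B_i$, and so $\operatorname{Im}(P)=\sum_i\operatorname{Im}(B_i)=\sum_i[Z_i,\f g^{1,0}]=[\f g^{1,0},\f g^{1,0}]$. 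Transporting along the standard complex Lie algebra isomorphism $\f g\to\f g^{1,0}$ given by $X\mapsto\tfrac12(X-iJX)$, this reads $\operatorname{Im}(P)=[\f g,\f g]$. Now if $\lambda\ne 0$ then $P=\lambda\,\id_{\f g}$ is invertible, so $[\f g,\f g]=\operatorname{Im}(P)=\f g$ and $\mathsf{G}$ is perfect, as claimed. (If $\lambda=0$ then $P=0$ forces $\ad_{Z_i}^{*}=0$, hence $\ad_{Z_i}=0$ for all $i$, so $\f g$ is abelian; this is the flat case, which a genuine scale-static metric does not realise.)

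I do not anticipate a real obstacle: the argument is short. The two points requiring a little care are (i) the step $\operatorname{Im}\bigl(\sum_iB_i\bigr)=\sum_i\operatorname{Im}(B_i)$, which genuinely uses positive semi-definiteness of the $B_i$ and fails for arbitrary operators; and (ii) keeping in mind that ``perfect'' is a statement about $\f g$, which is what licenses both the reduction to the simply-connected cover and the free passage between $\f g$ and $\f g^{1,0}$.
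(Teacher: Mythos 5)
Your argument is correct and is essentially the standard one (the paper itself only cites \cite[Proposition~2.5]{stanfield21} without reproducing the proof): from (\ref{eqn:TTCR}) one reads off $\operatorname{Im}(P^g_\mu)=[\f g^{1,0},\f g^{1,0}]\cong[\f g,\f g]$, so $P^g_\mu=\lambda\,\id$ with $\lambda\neq 0$ forces perfectness. Your detour through $\operatorname{Im}\bigl(\sum_iB_i\bigr)=\sum_i\operatorname{Im}(B_i)$ is valid but heavier than necessary: for the perfectness direction you only need the inclusion $\operatorname{Im}(P^g_\mu)\subseteq[\f g^{1,0},\f g^{1,0}]$, which is immediate from the first expression in (\ref{eqn:TTCR}) since the image is spanned by the brackets $\mu(Z_i,Z_j)$. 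The one point to tighten is your parenthetical on $\lambda=0$: with the definition of static used in this paper ($\Theta(g)=\lambda g$ for \emph{some} $\lambda\in\R$), the flat metric on $\C^n$ is static and $\C^n$ is not perfect, so the dichotomy your computation actually establishes is ``either $\lambda>0$ and $\f g$ is perfect, or $\lambda=0$ and $\f g$ is abelian''; the proposition as stated implicitly excludes the abelian case, and your appeal to ``a genuine scale-static metric'' is not justified by anything in the stated definition. This is a defect of the statement rather than of your argument, but it should be made explicit rather than waved away.
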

	Recall that a Lie group $\mathsf{G}$ is \emph{perfect} if $[\f g,\f g] = \f g$ and that all semisimple Lie groups are perfect. 
	
	We will now use the structure theory of complex semisimple Lie groups to construct a canonical class of left-invariant $\HCFp$-static metrics. To that end, recall a Cartan decomposition of a complex semisimple Lie group $(\mathsf{G},J)$ is given by $\mathsf{G} = \mathsf{K}\exp(J_e \f k)$, where $\mathsf{K}$ is any maximal compact subgroup of $\mathsf{G}$ and $\f k = T_e\mathsf{K}$ is its Lie algebra. At the level of Lie algebras, we have $\f g = \f k \oplus J_e \f k$. With this, we make the following Definition:
	
	\begin{defn}
		\label{def:canonicalMetric}
		Let $(\mathsf{G},J)$ be a semisimple, complex Lie group, $\mathsf{K}\leq \mathsf{G}$ a maximal compact subgroup, and $\kappa\colon \f k \times \f k \to \R$ the Killing form of $\f k$. Define $g^{\mathsf{K}}$ to be the unique left-invariant Hermitian metric on $(\mathsf{G},J)$ such that $g^{\mathsf{K}}_e$ is given by $g_e^{\mathsf{K}}|_{\f k \times \f k} = -\kappa$, and extended as a $J_e$-Hermitian inner product to $\f g = \f k \oplus J_e \f k$.
	\end{defn}
	Note that $g^{\mathsf{K}}_e$ is $\operatorname{Ad}(\mathsf{K})$-invariant. This definition is canonical in the sense that it defines a $\mathsf{G}$-equivariant-isometry class of Hermitian metrics which doesn't depend on the choice of maximal compact subgroup $\mathsf{K}$, as Proposition \ref{prop:canonicalMetric} now shows.
	\begin{prop}
		\label{prop:canonicalMetric}
		Suppose $\mathsf{K},\mathsf{L}\leq (\mathsf{G},J)$ are two maximal compact subgroups of a semisimple, complex Lie group. Then $g^{\mathsf{L}} = \phi^*g^{\mathsf{K}}$ for some inner automorphism $\phi\colon (\mathsf{G},J) \to (\mathsf{G},J)$.
	\end{prop}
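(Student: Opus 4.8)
The plan is to deduce the statement from the classical conjugacy theorem for maximal compact subgroups together with the fact that the Killing form is intrinsic, i.e.\ preserved by any Lie algebra isomorphism. First I would invoke the Cartan--Iwasawa--Malcev theorem: since $\mathsf{K}$ and $\mathsf{L}$ are maximal compact subgroups of $\mathsf{G}$, there is some $x\in\mathsf{G}$ with $\mathsf{L}=x\mathsf{K}x^{-1}$, and hence $\f l=\operatorname{Ad}(x)\f k$. Define $\phi\colon\mathsf{G}\to\mathsf{G}$ by $\phi(g)=x^{-1}gx$; this is an inner automorphism of $\mathsf{G}$, it is a biholomorphism of $(\mathsf{G},J)$ because the group operations of a complex Lie group are holomorphic, and its differential at $e$ is $d\phi_e=\operatorname{Ad}(x^{-1})$, which is $\C$-linear with respect to $J_e$ (again since $\mathsf{G}$ is complex) and satisfies $\operatorname{Ad}(x^{-1})\f l=\f k$.

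Next I would check that $\phi^{*}g^{\mathsf{K}}=g^{\mathsf{L}}$. Both are left-invariant Hermitian metrics: $g^{\mathsf{L}}$ by Definition \ref{def:canonicalMetric}, and $\phi^{*}g^{\mathsf{K}}$ because $\phi$ is a group automorphism (so $L_a^{*}\phi^{*}g^{\mathsf{K}}=(\phi\circ L_a)^{*}g^{\mathsf{K}}=(L_{\phi(a)}\circ\phi)^{*}g^{\mathsf{K}}=\phi^{*}g^{\mathsf{K}}$) and $J$-Hermitian because $\phi$ is holomorphic. It therefore suffices to compare the two inner products at $e$, and by the uniqueness clause of Definition \ref{def:canonicalMetric} it is enough to show that $(\phi^{*}g^{\mathsf{K}})_e$ restricts to $-\kappa_{\f l}$ on $\f l$ and makes $\f l$ orthogonal to $J_e\f l$. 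Orthogonality holds because $\operatorname{Ad}(x^{-1})$ sends $\f l$ into $\f k$ and, being complex-linear, sends $J_e\f l$ into $J_e\f k$, while $\f k\perp_{g^{\mathsf{K}}_e}J_e\f k$ by construction. For the restriction, given $X,Y\in\f l$ one has $(\phi^{*}g^{\mathsf{K}})_e(X,Y)=g^{\mathsf{K}}_e(\operatorname{Ad}(x^{-1})X,\operatorname{Ad}(x^{-1})Y)=-\kappa_{\f k}(\operatorname{Ad}(x^{-1})X,\operatorname{Ad}(x^{-1})Y)$, and since $\operatorname{Ad}(x^{-1})|_{\f l}\colon\f l\to\f k$ is an isomorphism of Lie algebras, it intertwines the Killing forms, giving $-\kappa_{\f l}(X,Y)$. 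Hence $(\phi^{*}g^{\mathsf{K}})_e=g^{\mathsf{L}}_e$ and the two left-invariant metrics coincide.

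I do not expect a real obstacle here; once the conjugacy theorem is in hand the rest is bookkeeping. The only points that need a little care are: (i) that the conjugating element can be chosen inside $\mathsf{G}$, so that $\phi$ is genuinely \emph{inner} — this is exactly what the conjugacy theorem provides for the groups under consideration; (ii) that inner automorphisms of a complex Lie group are holomorphic, so that pulling back preserves the property of being a $J$-Hermitian metric; and (iii) that $\operatorname{Ad}(x^{-1})$ commutes with $J_e$, which is what carries the orthogonal splitting $\f g=\f k\oplus J_e\f k$ to $\f g=\f l\oplus J_e\f l$. All three are immediate from the structure theory of complex semisimple Lie groups recalled just before the statement.
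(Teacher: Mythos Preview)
Your proof is correct and follows essentially the same route as the paper's: invoke conjugacy of maximal compact subgroups to get $\f l=\operatorname{Ad}(x)\f k$, use that $\operatorname{Ad}(x^{-1})\colon\f l\to\f k$ intertwines the Killing forms, and use $[\operatorname{Ad}(x),J_e]=0$ to carry the splitting across. The paper's version is slightly terser (it verifies the Killing-form invariance by an explicit trace computation and skips the checks on left-invariance and orthogonality that you spell out), but the underlying argument is the same.
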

	\begin{proof}
		By standard theory, $L$ is conjugate to $\mathsf{K}$ in $\mathsf{G}$, so $\f l = T_e\mathsf{L} = \operatorname{Ad}_x\f k$ for some $x \in \mathsf{G}$. For $\operatorname{Ad}_xX,\operatorname{Ad}_xY \in \f l$,
		\[
		\begin{split}
		g_e^{\mathsf{L}}(\operatorname{Ad}_xX,\operatorname{Ad}_xY) =&  -\tr(\ad_{\operatorname{Ad}_xX}\circ \ad_{\operatorname{Ad}_xY}|_{\f l})\\ =& -\tr(\operatorname{Ad}_{x^{-1}}\circ\ad_{\operatorname{Ad}_xX}\circ \ad_{\operatorname{Ad}_xY}\circ \operatorname{Ad}_x|_{\f k})\\
		=& -\tr(\ad_{X}\circ \ad_{Y}|_{\f k}) = g_e^{\mathsf{K}}(X,Y).
		\end{split} 
		\]
		Thus, since $J_e$ commutes with $\operatorname{Ad}_x$, $g_e^{\mathsf{L}}(\operatorname{Ad}_x\cdot,\operatorname{Ad}_x\cdot) = g_e^{\mathsf{K}}$ and the result follows.
	\end{proof}
%	Let $(G,J)$ be a simply-connected, semisimple, complex Lie group. By the Cartan decomposition, we may write $G = K\exp(J_e\f k)$, where $K \leq G$ is a maximal compact subgroup and $\f k = T_eK$ is its Lie algebra. Then there exists a left-invariant and $\operatorname{Ad}(K)$-invariant Hermitian metric on $(G,J)$, defined uniquely by $g^K_e|_{\f k} = -\kappa$, where $\kappa\colon \f k\times \f k \to \R$ is the (negative definite) Killing form of $\f k$. Note that $g_e^K|_{\f k}$ can be extended uniquely as a Hermitian inner product to $\f g = \f k \oplus J_e\f k$. Any other maximal compact subgroup $L \leq G$ is conjugate to $K$, which implies that $g^L = \phi^*g^K$ for some inner automorphism $\phi$ of $(G,J)$. Thus, $(G,J)$ carries a canonical equivariant-isometry class of left-invariant metrics.
%	On a complex, semisimple Lie group $(G,J)$, a canonical left-invariant metric $g^{K}$ can be constructed as follows: We write $\operatorname{Lie}(G) =: \f g = \f g_\R \oplus J_e \f g_\R$ where $\f g_\R$ is the unique compact real form of $(\f g,J_e)$. Let $\kappa\colon \f g_\R \times \f g_\R \to \f g_\R$ be the Killing form of $\f g_\R$, which is negative definite. Then let $g_e^{K}$ be the unique $J_e$-Hermitian inner product on $\f g$ such that $g_e^K|_{\f g_\R} = -\kappa$. $g^{K}$ is $\operatorname{Ad}(K)$-invariant, where $K$ is the maximal compact subgroup of $G$ (i.e. $T_eK = \f g_\R$).
	\begin{prop}[{\cite[~Theorem 5.5]{UstHCFHom2017}}]
		\label{prop:canonicalMetricStatic}
		Let $(\mathsf{G},J)$ be a semisimple, complex Lie group and $\mathsf{K} \leq \mathsf{G}$ a maximal compact subgroup. Then the metric $g^{\mathsf{K}}$ is $\HCFp$-static.
		\end{prop}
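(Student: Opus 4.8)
The plan is to compute the torsion-twisted Chern--Ricci operator $P^{g^{\mathsf K}}_{[\cdot,\cdot]}$ directly from (\ref{eqn:TTCR}) and to show it equals $\id_{\f g}$, so that $\Theta(g^{\mathsf K}) = g^{\mathsf K}$ (the soliton constant being $\lambda = 1$). By Proposition \ref{prop:canonicalMetric} the assertion is independent of the choice of $\mathsf K$, so I would fix one maximal compact subgroup and a basis $\{X_i\}_{i=1}^n$ of $\f k$ orthonormal for $-\kappa$. By Definition \ref{def:canonicalMetric}, $\{X_i, J_eX_i\}_{i=1}^n$ is then $g^{\mathsf K}_e$-orthonormal, hence
\[
Z_i := \tfrac{1}{\sqrt 2}\bigl(X_i - i\,J_eX_i\bigr)\in \f g^{1,0},\qquad i=1,\dots,n,
\]
form a $g^{\mathsf K}$-unitary frame, which I feed into (\ref{eqn:TTCR}).

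The first step is a bookkeeping identity relating $\ad_{Z_i}$ to $\ad_{X_i}$. Since $J$ is bi-invariant, $\ad_{J_eX_i} = J_e\circ\ad_{X_i}$ and $\ad_{X_i}$ commutes with $J_e$, hence preserves each eigenspace $\f g^{1,0},\f g^{0,1}$ of $J_e$ on $\f g\otimes\C$; together with $[\f g^{1,0},\f g^{0,1}]=0$ (valid because $\mathsf G$ is a complex Lie group) this gives $\ad_{Z_i}|_{\f g^{0,1}} = 0$ and $\ad_{Z_i}|_{\f g^{1,0}} = \sqrt 2\,\ad_{X_i}|_{\f g^{1,0}}$. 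Next I would check that each $\ad_{X_i}$, being $(-\kappa)$-skew on $\f k$, is $g^{\mathsf K}_e$-skew on all of $\f g$ (using that $g^{\mathsf K}_e$ is the $J_e$-Hermitian extension of $-\kappa$ with $\f k\perp_{g^{\mathsf K}_e} J_e\f k$, and that $\ad_{X_i}$ commutes with $J_e$); passing to the Hermitian extension and using $\f g^{1,0}\perp\f g^{0,1}$ yields $\ad_{Z_i}^* = -\ad_{Z_i}$. Substituting into (\ref{eqn:TTCR}) gives, on $\f g^{1,0}$,
\[
P^{g^{\mathsf K}}_{[\cdot,\cdot]} = -\tfrac12\sum_i \ad_{Z_i}^2 = -\sum_i \ad_{X_i}^2 .
\]

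The heart of the argument is the classical Casimir identity $\sum_i \ad_{X_i}^2 = -\id_{\f k}$, i.e. that the Casimir operator of $\f k$ built from its Killing form acts as the identity on the adjoint representation. I would obtain this by observing that $C := -\sum_i\ad_{X_i}^2$ commutes with $\ad_{\f k}$, decomposing the compact semisimple algebra $\f k$ into $\kappa$-orthogonal simple ideals, invoking Schur's Lemma on each (the adjoint representation of a simple algebra is irreducible) to see that $C$ is a scalar on each ideal, and finally pinning that scalar down to $1$ via the trace computation $\tr(\ad_X\ad_Y|_{\f k_j}) = \kappa(X,Y)$. From $\sum_i\ad_{X_i}^2 = -\id_{\f k}$ and $\ad_{X_i}\circ J_e = J_e\circ \ad_{X_i}$ one gets $\sum_i\ad_{X_i}^2 = -\id_{\f g}$ on $\f g = \f k\oplus J_e\f k$, hence $-\id$ on $\f g^{1,0}$ after $\C$-linear extension. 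Therefore $P^{g^{\mathsf K}}_{[\cdot,\cdot]} = \id_{\f g^{1,0}}$, equivalently $\id_{\f g}$ since it commutes with $J_e$, so $\Theta(g^{\mathsf K}) = g^{\mathsf K}$ and $g^{\mathsf K}$ is $\HCFp$-static.

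I do not anticipate a genuine obstacle: the proof is essentially the remark that, in the unitary frame built from a $(-\kappa)$-orthonormal frame of $\f k$, the operators $\ad_{Z_i}\circ\ad_{Z_i}^*$ assemble into (twice) the Casimir operator of $\f k$ on its adjoint representation. The only points requiring care are the orthogonality/skewness bookkeeping with the complex structure, and --- for a non-simple semisimple $\f g$ --- checking that the Casimir scalar is the same on every simple factor, which the trace normalization provides (equivalently, one may first reduce to the simple case, since both $\Theta$ and $g^{\mathsf K}$ respect the decomposition of $\f g$ into simple complex ideals).
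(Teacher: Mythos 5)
The paper itself gives no proof of this proposition --- it is imported verbatim from \cite[Theorem 5.5]{UstHCFHom2017} --- so there is no internal argument to compare against. Your proof is correct and is essentially the standard one: in the unitary frame built from a $(-\kappa)$-orthonormal basis of $\f k$, the operator $\tfrac12\sum_i\ad_{Z_i}\ad_{Z_i}^*$ of (\ref{eqn:TTCR}) becomes a multiple of the Casimir operator of $\f k$ in the adjoint representation, which is a multiple of the identity; this is also consistent with the paper's explicit computation on $\mathsf{SL}_{n+1}(\C)$ (Lemma \ref{lem:P} at $x=y=z=1$ gives $P=(n+1)\id$ for $\bgm{\cdot}{\cdot}=\tr(XY^*)$, hence a scalar operator for $g^{\mathsf{SU}(n+1)}=2(n+1)\bgm{\cdot}{\cdot}$). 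Two minor points, neither of which affects the conclusion. First, your constant $\lambda=1$ differs by a factor of $2$ from the $\lambda=\tfrac12$ one extracts from Lemma \ref{lem:P}: this traces to whether (\ref{eqn:TTCR}) is evaluated on a unitary frame of $\f g^{1,0}\subset\f g\otimes_{\R}\C$ (your literal reading, under which the induced bracket carries a factor $\sqrt2$, exactly your identity $\ad_{Z_i}|_{\f g^{1,0}}=\sqrt2\,\ad_{X_i}$) or on a unitary basis of $(\f g,J)$ with the original bracket (the reading used in the proof of Lemma \ref{lem:P}); this is pure normalisation and irrelevant to staticity. Second, Schur's Lemma for the \emph{real} irreducible adjoint representation of a simple ideal $\f k_j$ only gives that the commutant is a division algebra, so to conclude that $C=-\sum_i\ad_{X_i}^2$ is scalar on $\f k_j$ you should add that $C$ is $\kappa$-symmetric (each $\ad_{X_i}$ is $\kappa$-skew), hence diagonalisable with $\ad$-invariant eigenspaces, which are ideals and therefore trivial or everything; the trace normalisation then pins the scalar to $1$ as you say. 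With these remarks your argument is complete.
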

%	\begin{proof}
%	At the Lie algebra level, the Cartan decomposition is written $\f g = \f k \oplus J_e \f k$. The $\operatorname{Ad}(K)$-invariant inner product $g_e^K|_{\f k \times \f k}$ agrees with the negative of the Killing form on $\f k$ by uniqueness of bi-invariant metrics on compact simple Lie groups. Thus, $g_e^\mathsf{K}(X,Y) = -\tr(\operatorname{ad}_X\circ \operatorname{ad}_Y)$ for $X,Y \in \f k$. First, note that  
%	\end{proof}
	\begin{ex}
		Let $(\mathsf{G},J) = \mathsf{SL}_n(\C)$, then $\mathsf{K} = \mathsf{SU}(n+1)$ is a maximal compact subgroup and $g^{\mathsf{SU}(n+1)}_A(X,Y) = 2(n+1)\tr(A^{-1}X(A^{-1}Y)^*)$, for all $A\in \mathsf{SL}_n(\C)$ and $X,Y \in T_A\mathsf{SL_n(\C)} \cong A\f{sl}_n(\C)$. Here $(\cdot)^*$ denotes the conjugate transpose.
	\end{ex}
	
	It would be interesting to find examples of perfect, non-semisimple complex Lie groups with $\HCFp$-static metrics.	
	\section{Positive Hermitian Curvature Flow on the special linear groups}
	\label{sec:SLn}
	In this section we study the behaviour of a three parameter family of left-invariant solutions to the $\HCFp$ on $\mathsf{SL}_{n+1}(\C)$. We will show Theorem \ref{thm:SLnUnstable} by constructing solutions that begin arbitrarily close to the canonical static metric $g^{\mathsf{SU}(n+1)}$, yet do not converge to it after appropriate rescaling. We will then analyse the true Cheeger--Gromov limit of such solutions. We remark the choice to study solutions on $\mathsf{SL}_n(\C)$ was made after computing the linearisation of the left-invariant $\HCFp$ on various complex Lie groups and finding a centre eigenspace on $\mathsf{SL}_n(\C)$. We study only the global dynamics in this article.
	
	\subsection{Torsion-twisted Chern--Ricci curvature of $\operatorname{Ad}(\mathsf{SU}(n))$-invariant metrics on $\mathsf{SL}_{n+1}(\C)$}~\\
	Let ${\mathsf{SL}_{n+1}(\C)}$ be the complex Lie group of invertible $(n+1)\times(n+1)$ matrices of unit determinant, and $\f{sl}_{n+1}(\C)$ its Lie algebra of $(n+1) \times (n+1)$ traceless matrices. Denote by $[\cdot,\cdot]$ the usual commutator bracket. Define $\bgm{X}{Y} := \tr(XY^*)$ for all $X,Y \in \f{sl}_{n+1}(\C)$. Up to linear isometry and scaling, this is the unique $\operatorname{Ad}(\mathsf{SU}(n+1))$-invariant Hermitian inner product on $\f {sl}_{n+1}(\C)$. In the notation of Section \ref{sec:prelims}, $g^{\operatorname{Ad}(\mathsf{SU}(n+1))}_e = 2(n+1)\bgm{\cdot}{\cdot}$. Let $e_{ij}$ be the standard matrix with the only non-zero entry being $1$ in the $(i,j)$ position.
	
	There is a natural embedding 
	\[
	\f{sl}_n(\C) \to \begin{pmatrix}
		\f{sl}_n(\C)&0\\
		0&0
	\end{pmatrix} \subset \f{sl}_{n+1}(\C),
	\]
	and so $\f{sl}_n(\C)$ acts naturally on $\f{sl}_{n+1}(\C)$ via the adjoint representation. It follows that
	\[
	\f{sl}_{n+1}(\C) = \f{sl}_n(\C) \oplus  \C I \oplus \f{s},
	\]
	decomposes into $\f{sl}_n(\C)$-invariant subspaces, where
	\begin{equation}
		\label{eqn:basis}
	I := \sqrt{\frac{1}{n(n+1)}}\begin{pmatrix}
		\id_n&0\\
		0&-n
	\end{pmatrix}, \qquad \f{s} := \langle r_i := e_{i(n+1)}, s_i := e_{(n+1)i}\rangle_{i=1}^n.
	\end{equation}
	Note that with respect to $\bgm{\cdot}{\cdot}$ this decomposition is unitary. Let $\{u_k\}_{k=1}^{n^2-1}$ be a unitary basis for $\f{sl}_n(\C)$ such that $u_k^* = \pm u_k$. Then $\{u_k\}_{k=1}^{n^2-1}\cup \{I\}\cup\{r_i\}_{i=1}^n \cup \{s_i\}_{i=1}^n$ is a unitary basis for $\f{sl}_{n+1}(\C)$. We note here that $\f s$ decomposes into two copies of the standard representation of $\f{sl}_{n}(\C)$, but this is not necessary to consider for our ansatz.
	
	Simple computations show that the (non-zero) brackets satisfy
	\begin{equation}
		\label{eqn:slnBracketRelations}
	[\f{sl}_n(\C),\f{sl}_n(\C)] = \f{sl}_n(\C), \quad [\f{sl}_n(\C),\f{s}] = \f{s},
	\end{equation}
\begin{equation}
	\label{eqn:otherBracketRelations}
	[I,\f s] = \f s, \quad [\f s,\f s] = \f{sl}_n(\C) \oplus \C I.
\end{equation}
	For each $x,y,z > 0$, define $\sigma_{x,y,z} := x^{-1}\id_{\f{sl}_n(\C)} + y^{-1}\id_{\C I} + z^{-1}\id_{\f s} \colon \f{sl}_{n+1}(\C) \to \f{sl}_{n+1}(\C)$. Consider the $\operatorname{Ad}(\mathsf{SU}(n))$-invariant inner product $\bgm{\sigma_{x,y,z}\cdot}{\cdot}$. We will compute the torsion-twisted Chern--Ricci operator $P_{x,y,z} := P^{\bgm{\sigma_{x,y,z} \cdot}{\cdot}}_{[\cdot,\cdot]}$.
%	We recall that for an inner product $g$ on a Lie algebra $\f g$, 
%	\begin{equation}
%		\label{}
%	P^g = \sum_{i<j}g(\cdot,[v_i,v_j])[v_i,v_j] = \frac{1}{2}\sum_i \ad_{v_i}\circ \ad_{v_i}^*,
%	\end{equation}
%	where $\{v_i\}_{i=1}^{\dim \f g}$ is a $g$-unitary basis of $\f g$.
	\begin{lem}\label{lem:P} For all $x,y,z > 0$,
		\[
		\begin{split}
		P_{x,y,z} =& (nx+\frac{z^2}{x})\id_{\f{sl}_n(\C)} + (n+1)\frac{z^2}{y}\id_{I} + \frac{n+1}{n}((n-1)x + y)\id_{\f{s}}.
		\end{split} 
		\]
	\end{lem}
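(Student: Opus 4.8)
The plan is to compute $P_{x,y,z}$ directly from the formula \eqref{eqn:TTCR}, namely
\[
P^g_\mu = \frac{1}{2}\sum_i \operatorname{ad}_{Z_i}\circ\operatorname{ad}_{Z_i}^*,
\]
taking the $g$-unitary frame adapted to the orthogonal, $\f{sl}_n(\C)$-invariant decomposition $\f{sl}_{n+1}(\C) = \f{sl}_n(\C)\oplus\C I\oplus\f s$. Since $g = \bgm{\sigma_{x,y,z}\cdot}{\cdot}$ rescales $\bgm{\cdot}{\cdot}$ by $x^{-1},y^{-1},z^{-1}$ on the three summands, a $g$-unitary frame is obtained from the $\bgm{\cdot}{\cdot}$-unitary basis $\{u_k\}\cup\{I\}\cup\{r_i,s_i\}$ by scaling: $Z_k = x^{1/2}u_k$ on $\f{sl}_n(\C)$, $x^{1/2}$ replaced by $y^{1/2}$ for $I$, and $z^{1/2}$ for the $r_i,s_i$. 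First I would record how the $g$-adjoint $\operatorname{ad}^*$ relates to the $\bgm{\cdot}{\cdot}$-adjoint $\operatorname{ad}^\dagger$: because $\sigma_{x,y,z}$ is block-scalar and the bracket relations \eqref{eqn:slnBracketRelations}--\eqref{eqn:otherBracketRelations} respect the grading, $\operatorname{ad}_X^* = \sigma^{-1}\operatorname{ad}_X^\dagger\,\sigma$, and $\operatorname{ad}_X^\dagger = -\operatorname{ad}_{X^*}$ for the matrix conjugate transpose (this is the standard fact $\bgm{[X,Y]}{Z} = -\bgm{Y}{[X^*,Z]}$ for the trace form).

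The main computation then splits $P_{x,y,z}$ into three pieces according to which summand the index $i$ runs over, and on each of the three target blocks one evaluates the resulting operator using only the bracket relations and the fact that each summand is an irreducible (or at least isotypic) $\f{sl}_n(\C)$-module on which the relevant averaged operator must act as a scalar by Schur's lemma. Concretely: the contribution of the $\{u_k\}$ part of the frame to $P$ restricted to $\f{sl}_n(\C)$ is $\tfrac12 x\sum_k \operatorname{ad}_{u_k}\operatorname{ad}_{u_k}^\dagger = -\tfrac12 x\sum_k \operatorname{ad}_{u_k}\operatorname{ad}_{u_k^*} = $ (a multiple of) the Casimir of $\f{sl}_n(\C)$ on its adjoint representation, giving the $nx$ term; the $\{r_i,s_i\}$ part contributes $\tfrac{z^2}{2x}$ times the Casimir of $\f{sl}_n(\C)$ acting on $\f s\oplus\cdots$, which must be normalized against the adjoint-Casimir value $n$ — this is where the $z^2/x$ on $\f{sl}_n(\C)$, the $\frac{n+1}{n}(n-1)x$ on $\f s$, and so on, come from. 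For the $\C I$ block only the $\{r_i,s_i\}$ frame vectors contribute (since $[\f{sl}_n(\C),I]=0$ and $[I,I]=0$), and $[\f s,\f s]$ hitting the $\C I$ component produces the isolated $(n+1)z^2/y$ eigenvalue; for the $\f s$ block both the $\{u_k\}$ frame (giving $(n+1)x$ via the standard-rep Casimir, hence the $(n-1)x$ piece after the $\tfrac{n+1}{n}$ bookkeeping) and the $\{I\}$ frame vector (giving the $y$-term via $[I,\f s]=\f s$ with the explicit eigenvalue of $\operatorname{ad}_I$ read off from \eqref{eqn:basis}) contribute.

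The key structural point making this tractable is that $P_{x,y,z}$ is automatically $\operatorname{Ad}(\mathsf{SU}(n))$-equivariant (by Lemma \ref{lem:PEquiv}, since $\bgm{\sigma_{x,y,z}\cdot}{\cdot}$ and $[\cdot,\cdot]$ are $\operatorname{Ad}(\mathsf{SU}(n))$-invariant), so it preserves the decomposition and acts by a scalar on each isotypic component; thus the whole problem reduces to pinning down three (or four, counting the two halves of $\f s$, which turn out to agree) numbers. I would fix each scalar by testing the formula \eqref{eqn:TTCR} on a single convenient vector in each block — e.g. evaluate $\bgm{\sigma P_{x,y,z}I}{I}$ using $P_{x,y,z}I = \tfrac12\sum \bgm{\sigma\cdot}{[\B{Z_i},\B{Z_j}]}\,[Z_i,Z_j]$ with $Z_i,Z_j$ the scaled $r,s$ vectors, and read off the $\C I$-component of the various $[r_i,s_j]$ and $[s_i,r_j]$, which are explicit by \eqref{eqn:otherBracketRelations} and the definition of $I$ — and similarly test on $r_1$ and on a root vector $u_k$. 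The main obstacle will be bookkeeping the normalization constants: correctly relating the Killing/Casimir constants of $\f{sl}_n(\C)$ on its adjoint versus standard representations, and tracking the $x,y,z$ weights through $\sigma^{-1}\operatorname{ad}^\dagger\sigma$, so that the three scalars come out exactly as $nx+\tfrac{z^2}{x}$, $(n+1)\tfrac{z^2}{y}$, and $\tfrac{n+1}{n}\bigl((n-1)x+y\bigr)$. Everything else is a routine, if slightly lengthy, application of Schur's lemma and the explicit bracket table.
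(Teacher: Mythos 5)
Your proposal follows essentially the same route as the paper's proof: evaluate (\ref{eqn:TTCR}) in the rescaled unitary frame adapted to $\f{sl}_n(\C)\oplus\C I\oplus\f s$, use $\ad_v^* = \sigma^{-1}\ad_{v^*}\sigma$, reduce the sums $\sum\ad_{u_k^*}\ad_{u_k}$ and $\sum(\ad_{r_i^*}\ad_{r_i}+\ad_{s_i^*}\ad_{s_i})$ to Casimir/Killing-form constants on each invariant block, and finish with the explicit brackets of $I$, $r_i$, $s_j$. The one slip is the sign in your adjoint formula: for the sesquilinear form $\bgm{X}{Y}=\tr(XY^*)$ one has $\ad_X^\dagger = +\ad_{X^*}$ (the minus-sign identity belongs to the bilinear Killing form), as is forced in any case by the positive semi-definiteness of $P=\tfrac12\sum\ad_{Z_i}\ad_{Z_i}^*$ and by your own target formula.
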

	\begin{proof}
		The proof is inspired by that of Leite and Dotti De Miatello in \cite{Leite82}. Let $x,y,z > 0$ and $\sigma = \sigma_{x,y,z}$. A straightforward computation shows that for any $v \in \f{sl}_{n+1}(\C)$, the adjoint of $\ad_v$ with respect to the metric $\bgm{\sigma\cdot}{\cdot}$ is given by $\ad_v^* = \sigma^{-1}\ad_{v^*}\sigma$. Moreover, $\{x^{1/2}u_k\}_{k=1}^{n^2-1}\cup\{y^{1/2}I\}\cup\{z^{1/2}s_i\}_{i=1}^n$ is a $\bgm{\sigma \cdot}{\cdot}$-unitary basis of $\f{sl}_{n+1}(\C)$. By (\ref{eqn:slnBracketRelations}), $\ad_v$ commutes with $\sigma$ for all $v\in \f{sl}_n(\C)$. Since $u_k^* = \pm u_k$ and $r_i = s_i^*$, it follows that
		\[
		\begin{split}
			P_{x,y,z} =& \frac{1}{2}x \sum \ad_{u_k^*}\ad_{u_k} + \frac{1}{2}y\ad_I \sigma^{-1}\ad_I\sigma\\
			&+ \frac{1}{2}z\left(\sum \ad_{r_i^*}\sigma^{-1}\ad_{r_i}\sigma + \sum\ad_{s_i^*}\sigma^{-1}\ad_{s_i}\sigma\right).
		\end{split}
		\]
		Recall that the Killing form of $\f{sl}_{n+1}(\C)$ is given by $\tr(\ad_X \circ \ad_Y) = 2(n+1)\tr(XY) = 2(n+1)\bgm{X}{Y^*}$. Thus,
		\begin{equation}
			\label{eqn:SumOfadBasis}
		\sum\ad_{u_k^*}\ad_{u_k} + \ad_{I}^2 + \sum\ad_{r_i^*}\ad_{r_i} + \sum\ad_{s_i^*}\ad_{s_i} = 2(n+1)\id.
		\end{equation}
		Similarly, restricting to $\f{sl}_n(\C)$,
		\begin{equation}
			\sum\ad_{u_k^*}\ad_{u_k}\big|_{\f{sl}_n(\C)} = 2n\id_{\f{sl}_n(\C)}.
		\end{equation}
	Combining these yields
	\[
	\big(\sum \ad_{r_i^*}\ad_{r_i} + \sum \ad_{s_i^*}\ad_{s_i}\big)\big|_{\f{sl}_n(\C)} = 2\id_{\f{sl}_n(\C)}.
	\]
 	Thus, by (\ref{eqn:otherBracketRelations}) and the definition of $\sigma$, for $v\in \f{sl}_n(\C)$,
	\[
	\begin{split}
		P_{x,y,z}v = nxv + z^2x^{-1}v.
	\end{split}
	\]
	Similarly, by (\ref{eqn:SumOfadBasis}),
	\[
	\big(\sum \ad_{r_i^*}\ad_{r_i} + \sum \ad_{s_i^*}\ad_{s_i}\big)\big|_{\C I} = 2(n+1)\id_{\C I}.
	\]
	So, again by (\ref{eqn:otherBracketRelations}),
	\[
	P^{\bgm{\sigma \cdot}{\cdot}}I = (n+1)zI. 
	\]
	Finally we compute $P^{\bgm{\sigma \cdot}{\cdot}}$ on $\f{s}$. By direct computation, we have for all $1 \leq i \leq j$
	\[
	\ad_{I}r_i = \sqrt{\frac{(n+1)}{n}}r_i,\quad \ad_{I}s_i = -\sqrt{\frac{(n+1)}{n}}s_i.
	\]
	In particular, $\ad_I^2|_{\f{s}} = \frac{n+1}{n}\id_{\f{s}}$. For any $1 \leq i,j \leq n$,
	\begin{equation}
		\label{eqn:risjRels}
	\ad_{r_i}r_j = \ad_{s_i}s_j = 0,\qquad \ad_{r_i}s_j = e_{ij} - \frac{1}{n}\delta_{ij}\begin{pmatrix}\id_{\f{sl}_n(\C)}&0\\0&0\end{pmatrix}+ \delta_{ij}\sqrt{\frac{n+1}{n}}I.
	\end{equation}
	Thus,
	\begin{equation*}
		\begin{split}
	\ad_{r_i^*}\sigma^{-1}\ad_{r_i}\sigma s_j &= xz^{-1}\ad_{s_i}\left(e_{ij} - \frac{1}{n}\delta_{ij}\begin{pmatrix}\id_{\f{sl}_n(\C)}&0\\0&0\end{pmatrix}\right) + yz^{-1}\delta_{ij}\sqrt{\frac{n+1}{n}}\ad_{s_i}I\\
	&= xz^{-1}\left(s_j - \frac{1}{n}\delta_{ij}s_i\right) + yz^{-1}\frac{n+1}{n}\delta_{ij}s_i.\\
	\end{split}
	\end{equation*}
	Summing over $i$ yields
	\[
	\sum_{i}\ad_{r_i^*}\sigma^{-1}\ad_{r_i}\sigma s_j = z^{-1} \frac{n+1}{n}((n-1)x + y)s_j.
	\]
	A similar computation for $\ad_{s_i^*}\sigma^{-1}\ad_{s_i}r_j$ gives
	\[
	(\sum\ad_{r_i^*}\sigma^{-1}\ad_{r_i}\sigma + \sum\ad_{s_i}^*\sigma^{-1}\ad_{s_i}\sigma)|_{\f{s}} =  z^{-1} \frac{n+1}{n}((n-1)x + y)) \id_{\f{s}}.
	\]
	Using this, and (\ref{eqn:SumOfadBasis}) once gain, we see that $\sum\ad_{u_k^*}\ad_{u_k}|_{\f s} = \frac{n^2-1}{n}\id_{\f s}$. This yields \[P_{x,y,z}|_{\f{s}} = \frac{n+1}{n}((n-1)x + y)\id_{\f{s}}\] as required.
	\end{proof}

	\begin{cor}
		\label{cor:HCFpEquiv}
		Let $g$ be a left-invariant, Hermitian metric on ${\mathsf{SL}_{n+1}(\C)}$ induced by the inner product $\bgm{\sigma_{x_0,y_0,z_0}\cdot}{\cdot}$ on $\f{sl}_n(\C)$ for some $x_0,y_0,z_0 > 0$. Then the unique left-invariant $\HCFp$ solution $(g_t)_{t\in [0,T_{\max})}$ with $g_0 = g$ is induced by $\bgm{\sigma_{x,y,z}\cdot}{\cdot}$, where $(x,y,z)\colon [0,T_{\max}) \to \R_{>0}^3$ is a smooth solution to the system
	\begin{equation}
			\label{eqn:xyzODE}
		\begin{cases}
			\dot x = nx^2 + z^2,\quad &x(0)= x_0,\\
			\dot y = (n+1)z^2,\quad &y(0) = y_0,\\
			\dot z = \frac{n+1}{n}z((n-1)x + y),\quad &z(0) = z_0.
		\end{cases}
	\end{equation}
	\end{cor}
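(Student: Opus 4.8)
The plan is to show that the three-parameter family $\{\bgm{\sigma_{x,y,z}\cdot}{\cdot}: x,y,z>0\}$ is preserved by the $\HCFp$, that the induced motion of the parameters is precisely (\ref{eqn:xyzODE}), and then invoke uniqueness of left-invariant solutions. Recall from Section \ref{sec:prelims} that a left-invariant $\HCFp$ solution is, at the Lie-algebra level, a curve $g_t$ of $J$-Hermitian inner products on $\f{sl}_{n+1}(\C)$ solving $\partial_t g_t = -\Theta(g_t) = -g_t(P^{g_t}_{[\cdot,\cdot]}\cdot,\cdot)$, and that short-time existence and uniqueness hold among such solutions.

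The first step is to set up the correspondence between curves inside the ansatz and curves $(x,y,z)\colon[0,T)\to\R_{>0}^3$. Write $\sigma=\sigma_{x,y,z}$ and $g=\bgm{\sigma\cdot}{\cdot}$. The operator $\sigma$ is block-diagonal and self-adjoint with respect to the \emph{fixed} background inner product $\bgm{\cdot}{\cdot}$ in the $\bgm{\cdot}{\cdot}$-orthogonal decomposition $\f{sl}_{n+1}(\C)=\f{sl}_n(\C)\oplus\C I\oplus\f s$ from (\ref{eqn:basis}). By Lemma \ref{lem:P}, $P_{x,y,z}:=P^g_{[\cdot,\cdot]}$ is also block-diagonal in this same decomposition; hence $\sigma$ and $P_{x,y,z}$ commute and $\sigma P_{x,y,z}$ is $\bgm{\cdot}{\cdot}$-self-adjoint (this is the point reconciling the two relevant notions of adjoint: $\sigma$ is symmetric for $\bgm{\cdot}{\cdot}$ while $P_{x,y,z}$ is symmetric for the evolving metric $g$). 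Consequently, for $g_t:=\bgm{\sigma_{x(t),y(t),z(t)}\cdot}{\cdot}$ we have $\partial_t g_t=\bgm{\dot\sigma\cdot}{\cdot}$ and $\Theta(g_t)=g_t(P_{x,y,z}\cdot,\cdot)=\bgm{\sigma P_{x,y,z}\cdot}{\cdot}$, so $(g_t)$ solves the $\HCFp$ if and only if $\dot\sigma=-\sigma P_{x,y,z}$ as a curve of $\bgm{\cdot}{\cdot}$-symmetric operators.

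Next I would read this identity off block by block. Since $\sigma=x^{-1}\id_{\f{sl}_n(\C)}+y^{-1}\id_{\C I}+z^{-1}\id_{\f s}$, one has $\dot\sigma=-\dot x x^{-2}\id_{\f{sl}_n(\C)}-\dot y y^{-2}\id_{\C I}-\dot z z^{-2}\id_{\f s}$, and substituting the three eigenvalues of $P_{x,y,z}$ supplied by Lemma \ref{lem:P} turns $\dot\sigma=-\sigma P_{x,y,z}$ into exactly the system (\ref{eqn:xyzODE}). Finally, by standard ODE theory (\ref{eqn:xyzODE}) with the prescribed initial data has a unique maximal smooth solution, which stays in $\R_{>0}^3$ because $\dot z=z\cdot\tfrac{n+1}{n}((n-1)x+y)$ keeps $z>0$ while $\dot x,\dot y>0$ keep $x,y$ bounded below along the flow; this yields a left-invariant $\HCFp$ solution in the ansatz with $g_0=g$, so by uniqueness it coincides with $(g_t)_{t\in[0,T_{\max})}$. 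That the maximal existence times agree follows since the $\HCFp$ solution's interval is at least that of the ODE, and conversely $x,y,z$ are all nondecreasing, so a finite maximal ODE interval forces one of $x,y,z\to\infty$, i.e.\ $g_t$ degenerates, which cannot happen strictly before $T_{\max}$.

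There is no substantial obstacle here: the entire analytic content is Lemma \ref{lem:P}, after which the argument is bookkeeping. The only points needing care are the bookkeeping with the two adjoint conventions (handled by block-diagonality) and the identification of the ODE's maximal interval with $T_{\max}$.
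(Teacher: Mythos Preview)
Your proposal is correct and follows essentially the same route as the paper: use Lemma \ref{lem:P} to see that $P_{x,y,z}$ is block-diagonal with the stated eigenvalues, differentiate $\bgm{\sigma_{x,y,z}\cdot}{\cdot}$ in $t$, equate, and conclude by uniqueness. You are somewhat more explicit than the paper about why $\sigma$ and $P_{x,y,z}$ commute, why the solution stays in $\R_{>0}^3$, and why the maximal ODE interval agrees with $T_{\max}$, all of which the paper leaves implicit; but these are elaborations rather than a different argument.
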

	\begin{proof}
		Suppose that $(x,y,z)$ is a smooth solution to (\ref{eqn:xyzODE}) and consider the family of inner products \[\bgm{\sigma_{x,y,z}\cdot}{\cdot} = x^{-1}\bgm{\cdot}{\cdot}|_{\f{sl}_n(\C)\times\f{sl}_n(\C)} + y^{-1}\bgm{\cdot}{\cdot}|_{\C I \times \C I} + z^{-1}\bgm{\cdot}{\cdot}|_{\f s \times \f s}.\]
		Then,
		\[
		\begin{split}
		\frac{\textrm{d}}{\textrm{d}t} \bgm{\sigma_{x,y,z}\cdot}{\cdot} =& -x^{-1}\left(nx+ \frac{z^2}{x}\right)\bgm{\cdot}{\cdot}|_{\f{sl}_n(\C)\times\f{sl}_n(\C)} -y^{-1}(n+1)\frac{z^2}{y}\bgm{\cdot}{\cdot}|_{\C I \times \C I} \\
		& - z^{-1}\frac{n+1}{n}((n-1)x+y)\bgm{\cdot}{\cdot}|_{\f s \times \f s}\\
		=& -\bgm{\sigma_{x,y,z}P_{x,y,z}\cdot}{\cdot} = -\Theta(\bgm{\sigma_{x,y,z}\cdot}{\cdot}),
		\end{split}
		\]
		and the result follows from uniqueness of ODE solutions.
	\end{proof}

%	$$\frac{1}{2} \sum \ad_{u_k^*}\sigma^{-1}\ad_{u_k}\sigma + \frac{1}{2}\ad_I \sigma^{-1}\ad_I\sigma\\
%	&+ \frac{1}{2}\sum \ad_{r_k^*}\sigma^{-1}\ad_{r_k}\sigma + \frac{1}{2}\sum\ad_{s_k^*}\sigma^{-1}\ad_{s_k}\sigma
%	\end{split}$$
	\subsection{$\HCFp$ of $\operatorname{Ad}(\mathsf{SU}(n))$-invariant metrics on $\mathsf{SL}_{n+1}(\C)$}
	Given the computations of the previous section, we now turn to the analysis of (\ref{eqn:xyzODE}).
	\subsubsection{Blow up}
	We first show that the ODE (\ref{eqn:xyzODE}) blows up in finite time for any initial condition $(x_0,y_0,z_0)\in \R_{>0}^3$.
	
	\begin{prop}
		\label{prop:finTimeEx}
		Let $(x,y,z)$ be a $C^1$ solution to (\ref{eqn:xyzODE}) on $[0,T)$, where $T$ is the maximal time of existence. Then $T \leq \left((n+1)\min\{x_0,y_0,z_0\}\right)^{-1} < \infty$. Moreover,
		\[
		 x,y,z \geq \frac{1}{\min\{x_0,y_0,z_0\}^{-1}-(n+1)t} ,\qquad \forall t \in [0,T).
		\]
	\end{prop}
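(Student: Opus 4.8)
The statement concerns only the scalar system (\ref{eqn:xyzODE}), so the plan is purely dynamical: I would show that the minimum $m(t):=\min\{x(t),y(t),z(t)\}$ of the three components dominates the solution of the scalar Riccati equation $\dot\phi=(n+1)\phi^2$ and therefore inherits its finite-time blow-up. As a preliminary step, note that along any solution on $[0,T)$ one has $\dot x=nx^2+z^2\ge0$ and $\dot y=(n+1)z^2\ge0$, so $x\ge x_0>0$ and $y\ge y_0>0$; then $(n-1)x+y\ge y_0>0$ and $\dot z=\tfrac{n+1}{n}z\big((n-1)x+y\big)$ has the same sign as $z$, forcing $z>0$, hence $\dot z>0$ and $z\ge z_0>0$. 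In particular all three components are positive and non-decreasing, and $m(t)\ge m_0:=\min\{x_0,y_0,z_0\}>0$ on $[0,T)$.

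The heart of the matter is the differential inequality $D_+m(t)\ge(n+1)m(t)^2$ for the lower-right Dini derivative $D_+$. It suffices to check that whenever a component equals $m(t)$, its derivative is at least $(n+1)m(t)^2$: if $x(t)=m(t)$ then $\dot x=nx^2+z^2\ge nm^2+m^2=(n+1)m^2$ (using $z\ge m\ge0$); if $y(t)=m(t)$ then $\dot y=(n+1)z^2\ge(n+1)m^2$; and if $z(t)=m(t)$ then, using $x,y\ge m$, $\dot z=\tfrac{n+1}{n}z\big((n-1)x+y\big)\ge\tfrac{n+1}{n}m\cdot nm=(n+1)m^2$. Since $m$ is the minimum of finitely many $C^1$ functions it is locally Lipschitz, and a short continuity argument (for $h>0$ small, $m(t+h)$ is attained by one of the components realising the minimum at $t$) promotes this to the stated bound on $D_+m$.

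It then remains to compare with $\phi(t):=\big(m_0^{-1}-(n+1)t\big)^{-1}$, the solution of $\dot\phi=(n+1)\phi^2$ with $\phi(0)=m_0$, defined on $[0,T_*)$ for $T_*:=\big((n+1)m_0\big)^{-1}$ and blowing up as $t\to T_*^-$. The standard comparison principle for (Dini) differential inequalities gives $m(t)\ge\phi(t)$ on $[0,\min\{T,T_*\})$. Were $T>T_*$, the $C^1$ functions $x,y,z\ge m\ge\phi$ would be unbounded on the compact interval $[0,T_*]\subset[0,T)$, which is impossible; hence $T\le T_*=\big((n+1)\min\{x_0,y_0,z_0\}\big)^{-1}<\infty$, and for all $t\in[0,T)$ one gets $x(t),y(t),z(t)\ge m(t)\ge\phi(t)=\big(\min\{x_0,y_0,z_0\}^{-1}-(n+1)t\big)^{-1}$, as claimed.

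I expect the only genuinely delicate point to be the non-differentiability of $m$. One can either invoke the Dini-derivative form of the comparison lemma as above, or argue more self-containedly by contradiction at the first contact time $\tau$ of $m$ with a strict subsolution $\phi_\varepsilon$ satisfying $\phi_\varepsilon(0)<m_0$: avoiding an immediate contradiction there forces equality in the inequalities above, hence $x(\tau)=y(\tau)=z(\tau)$, and since $t\mapsto(v,v,v)$ with $\dot v=(n+1)v^2$ solves the full system (\ref{eqn:xyzODE}), backward uniqueness would give $x_0=y_0=z_0=\phi_\varepsilon(0)<\min\{x_0,y_0,z_0\}$, a contradiction. Everything else is elementary once the blow-up of the scalar Riccati comparison equation is in hand.
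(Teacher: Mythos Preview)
Your proof is correct and follows essentially the same approach as the paper: define the pointwise minimum of the three components, show it satisfies the differential inequality $\dot m\ge (n+1)m^2$ (almost everywhere, or in the Dini sense), and compare with the scalar Riccati solution to get both the lower bound and the finite-time blow-up. The paper's proof is considerably terser---it simply asserts that $\alpha=\min\{x,y,z\}$ is $C^1$ a.e.\ with $\dot\alpha\ge(n+1)\alpha^2$ and invokes comparison---whereas you have carefully supplied the positivity check, the case-by-case verification of the differential inequality, and a discussion of how to handle the non-differentiability of $m$; these are exactly the details one would want to see filled in.
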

	\begin{proof}
		Define $\alpha := \min\{x,y,z\}$ which is $C^1$ a.e. in $[0,T)$. It follows from (\ref{eqn:xyzODE}) that
		\[
			\dot \alpha \geq (n+1)\alpha^2, \quad \text{ a.e. in } [0,T).
		\]
		Thus, $\alpha > \left(\alpha_0^{-1} - (n+1)t\right)^{-1}$ by comparison with $f' = (n+1)f^2$, where $\alpha_0 := \min\{x_0,y_0,z_0\} > 0$. It follows that $T \leq \frac{(n+1)}{\alpha_0} < \infty$. The estimate also follows from the definition of $\alpha$.
	\end{proof}
	\subsubsection{Stability}
	Since solutions to (\ref{eqn:xyzODE}) blow up in finite time, we will now study the rescaled system to understand the asymptotic behaviour. In particular, we rescale so that $x \equiv 1$.
	\begin{prop}
		Let $(\tilde x,\tilde y,\tilde z)\in \R_{>0}^3$ be a solution to (\ref{eqn:xyzODE}). Then, after a re-parametrisation in time, $(y,z) = (\tilde y/\tilde x,\tilde z / \tilde x)$ solves
		\begin{equation}
			\label{eqn:yzODE}
			\begin{cases}
				\dot y = z^2(n+1 - y) - ny,\quad &y(0) = y_0 >0\\
				\dot z = \frac{(n+1)}{n}z(n-1 + y) - (n+z^2)z,\quad &z(0) = z_0 > 0.
			\end{cases}
		\end{equation}
	\end{prop}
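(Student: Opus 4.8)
The plan is a direct computation followed by a time reparametrisation. First I would set $y := \tilde y/\tilde x$ and $z := \tilde z/\tilde x$ and observe that $\tilde x,\tilde y,\tilde z>0$ on the whole interval of existence of $(\tilde x,\tilde y,\tilde z)$ — this is immediate from (\ref{eqn:xyzODE}), whose right-hand sides vanish on the coordinate hyperplanes, or directly from Proposition \ref{prop:finTimeEx} — so that $y,z$ are well-defined, smooth and positive. The quotient rule gives
\[
\dot y = \frac{\dot{\tilde y}}{\tilde x} - y\,\frac{\dot{\tilde x}}{\tilde x},\qquad \dot z = \frac{\dot{\tilde z}}{\tilde x} - z\,\frac{\dot{\tilde x}}{\tilde x}.
\]
Substituting (\ref{eqn:xyzODE}), and using $\tilde z^2/\tilde x = z^2\tilde x$, $\dot{\tilde x}/\tilde x = (n+z^2)\tilde x$ and $\tilde y/\tilde x = y$, I would obtain
\[
\dot y = \tilde x\bigl(z^2(n+1-y) - ny\bigr),\qquad \dot z = \tilde x\Bigl(\tfrac{n+1}{n}z(n-1+y) - (n+z^2)z\Bigr).
\]

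The key observation is then that both right-hand sides carry the common \emph{positive} factor $\tilde x$. Define the new time variable $s := \int_0^t \tilde x(\tau)\,\mathrm{d}\tau$; since $\tilde x>0$ is smooth, the map $t\mapsto s$ is a smooth, strictly increasing bijection of $[0,T)$ onto some interval $[0,S)$, with $\mathrm{d}s/\mathrm{d}t = \tilde x$. Applying the chain rule, $\mathrm{d}y/\mathrm{d}s = \tilde x^{-1}\dot y$ and $\mathrm{d}z/\mathrm{d}s = \tilde x^{-1}\dot z$ are exactly the right-hand sides of (\ref{eqn:yzODE}); the initial conditions become $y(0) = \tilde y_0/\tilde x_0 > 0$ and $z(0) = \tilde z_0/\tilde x_0 > 0$, which one relabels $y_0,z_0$.

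There is no real obstacle here: the statement is a routine change of variables. The only point deserving care is the justification of the reparametrisation — that $s(t)$ is a genuine $C^1$, orientation-preserving change of variable (guaranteed by $\tilde x>0$), and the remark that one does not retain explicit control of the new maximal time $S$, which need not be finite even though $T$ is by Proposition \ref{prop:finTimeEx}. One should also note explicitly that such a reparametrisation leaves the phase portrait of the planar system unchanged, since it is precisely the asymptotics and stability of the rescaled system that will be used to analyse $g^{\mathsf{SU}(n+1)}$.
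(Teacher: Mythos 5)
Your proof is correct and follows essentially the same route as the paper's: the paper invokes the standard normalisation of a homogeneous quadratic system (the rescaled system equals the original plus a scalar multiple $\lambda$ of the identity, with $\lambda = -n-z^2$ fixed by $\frac{\mathrm{d}}{\mathrm{d}t}(\tilde x/\tilde x)=0$), while you carry out the equivalent quotient-rule computation and the explicit time change $s=\int_0^t\tilde x\,\mathrm{d}\tau$, arriving at the same system. One small inaccuracy worth fixing: the right-hand sides of (\ref{eqn:xyzODE}) do not all vanish on the coordinate hyperplanes (e.g.\ $\dot x = nx^2+z^2$ need not vanish at $x=0$); positivity instead follows from the monotonicity of $x$ and $y$ together with the factor of $z$ in $\dot z$, or, as you also note, directly from Proposition \ref{prop:finTimeEx}.
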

	\begin{proof}
		By standard theory, after a time re-parametrisation, the rescaled system $(x,y,z) = (\tilde x/\tilde x = 1,\tilde y/\tilde x,\tilde z / \tilde x)$, is given by (\ref{eqn:xyzODE}) up to the addition of a scalar multiple (say $\lambda \in \R$) of the identity. Since $\frac{\mathrm{d}}{\mathrm{d}t} \frac{\tilde x}{\tilde x} = 0$, $\lambda = -n - z^2$. The result immediately follows.
	\end{proof}
	This system has two fixed points $(1,1)$ and $(0,0)$. The first corresponds to the canonical static metric $g^{\mathsf{SU}(n+1)}$ and the second is degenerate.
%	In light of Proposition~\ref{prop:finTimeEx}, we now consider the system (\ref{eqn:xyzODE}) after normalisation. In particular, we rescale so that $x \equiv 1$ while the solution exists. Up to a reparametrisation of time, this is equivalent to the system
%	\begin{equation}
%		\label{eqn:yzODE}
%		\begin{cases}
%			\dot y = z^2(n+1 - y) - ny,\quad &y(0) = y_0 >0\\
%			\dot z = \frac{(n+1)}{n}z(n-1 + y) - (n+z^2)z,\quad &z(0) = z_0 > 0.
%		\end{cases}
%	\end{equation}

	Let $v\colon\R^2 \to \R^2$ be the vector field associated to the system (\ref{eqn:yzODE}). i.e.
	\[
		v(y,z) := (z^2(n+1 - y) - ny, \frac{(n+1)}{n}z(n-1 + y) - (n+z^2)z).
	\]
	We will now study the global stability of the fixed point $(0,0)$.
	\begin{thm}
		\label{thm:stability}
		The point $(0,0)$ is a global attractor for the system (\ref{eqn:yzODE}) on the set 
		\[
			D := \{(y,z)\in\R^2_{>0} : \frac{z^2(n+1)}{z^2 + n} \leq y < 1\}.
		\]
	\end{thm}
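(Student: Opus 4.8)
The plan is to show that $D$ is positively invariant, that $y$ is non-increasing along trajectories in $D$, and then to identify the $\omega$-limit set using the LaSalle invariance principle. Write $f(z):=\frac{(n+1)z^2}{z^2+n}$, so that $D=\{(y,z):z>0,\ f(z)\le y<1\}$ and the lower boundary of $D$ is the $\dot y$-nullcline $\{y=f(z)\}$. Rearranging (\ref{eqn:yzODE}) gives the convenient forms
\[
\dot y=(z^2+n)\bigl(f(z)-y\bigr),\qquad \dot z=z\Bigl(\tfrac{(n+1)y-1}{n}-z^2\Bigr),
\]
so on $D$ (where $y\ge f(z)$) one has $\dot y\le 0$, with equality exactly on $\{y=f(z)\}$.

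The crucial step is the positive invariance of $D$, which reduces to a single algebraic identity: substituting $y=f(z)$ into the second equation above yields
\[
\dot z=-\frac{z\,(z^2-1)^2}{z^2+n}\ \le\ 0\qquad\text{on }\{y=f(z)\},
\]
with equality only at $z\in\{0,1\}$. Since $f'(z)=\frac{2n(n+1)z}{(z^2+n)^2}>0$ for $z>0$, along the curve $\{y=f(z)\}$ we get $\frac{d}{dt}(y-f(z))=\dot y-f'(z)\dot z=-f'(z)\dot z\ge 0$, so trajectories cannot leave $D$ through its lower boundary. On the remaining boundary arc $\{y=1,\ 0<z\le 1\}$ one computes $\dot y=n(z^2-1)\le 0$, and $\{z=0\}$ is an invariant line; hence $D$ is positively invariant.

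Now fix a solution with $(y(0),z(0))=(y_0,z_0)\in D$. It remains in $D$, and since $\dot y\le 0$ it also satisfies $y\le y_0$; moreover $y\ge f(z)$ forces $z^2\le\frac{ny_0}{n+1-y_0}$. Hence the forward orbit lies in the compact set $\Omega:=\{(y,z):z\ge 0,\ f(z)\le y\le y_0\}$, so the solution exists for all $t\ge 0$. The function $V=y$ is non-increasing and bounded below along the solution, and $\dot V=0$ precisely on $\{y=f(z)\}$; by the LaSalle invariance principle the $\omega$-limit set is a non-empty invariant subset of $\{y=f(z)\}\cap\Omega$. If an entire orbit lies on $\{y=f(z)\}$, differentiating this relation and using $\dot y=0$ there forces $f'(z)\dot z=0$, hence $\dot z=0$ (as $f'>0$ for $z>0$, the case $z=0$ being the origin), so the orbit is an equilibrium; the only equilibria of (\ref{eqn:yzODE}) are $(0,0)$ and $(1,1)$, and $(1,1)\notin\Omega$ because $y_0<1$. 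Therefore the $\omega$-limit set is $\{(0,0)\}$, i.e.\ the solution converges to $(0,0)$. Since $(y_0,z_0)\in D$ was arbitrary, this is the assertion.

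The main obstacle is the positive invariance of $D$: one has to notice that the defining curve of $D$ is exactly the $\dot y$-nullcline and that $\dot z$ is forced to be $\le 0$ along it, the sign being governed by the perfect square $(z^2-1)^2$ — this is precisely what makes this particular region work. Once that identity is in place the Lyapunov/LaSalle argument is routine, the only additional care being the passage to the compact region $\Omega$ and the exclusion of the spurious equilibrium $(1,1)$ via $y_0<1$.
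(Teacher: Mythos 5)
Your proof is correct and follows essentially the same route as the paper: the heart of both arguments is the positive invariance of $D$, established through the identity $\dot z = -z(z^2-1)^2/(z^2+n)$ on the nullcline $\{y=f(z)\}$ (the paper phrases the same computation as non-negativity of $\langle N_{y,z}, v(y,z)\rangle$ for the inward normal $N_{y,z}$), combined with the monotonicity of $y$. The only difference is that you conclude with LaSalle's invariance principle for $V=y$ where the paper invokes Poincar\'e--Bendixson, and your version has the minor virtue of explicitly excluding the equilibrium $(1,1)$ from the $\omega$-limit set via $y\le y_0<1$.
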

	\begin{proof}
		Let $(y,z) \in D$. We observe that $\bgm{(1,0)}{ v(y,z)} \leq 0$ if and only if $y \leq \frac{z^2(n+1)}{z^2 + n}$, and the same statement is true when replaced with equalities. The inward pointing normal to the boundary segment $\partial D_1 := \{(y,z)\in D : y = \frac{z^2(n+1)}{z^2 + n}\}\subset \partial D$ is given by $N_{y,z} = ((n+z^2)^2,-2n(n+1)z)$ for $(y,z) \in \partial D_1$. It follows that for all $(y,z) \in \partial D_1$,
		\[
		\bgm{N_{y,z}}{v(y,z)} = \frac{2 n (n+1) z^2 \left(z^2-1\right)^2}{n+z^2} \geq 0,
		\]
		with equality if and only if $z \in \{0,1,-1\}$. Thus, $v(y,z)\in \operatorname{int} \mathcal{T}_{(y,z)}D$, the interior of the tangent cone of $D$ at $(y,z)$, for all $(y,z)\in \partial D$. It follows that $D$ is an invariant subset for the ODE (\ref{eqn:yzODE}).
		
		Now let $(y,z)$ be a solution to (\ref{eqn:yzODE}) in $D$, which exists for all positive times as it remains in the compact set $\B D$. In this case, $\dot z \leq 0$ if and only if $y \leq \frac{1}{n+1}(nz^2 + 1)$. 
		
		Now if $(y,z)$ is a solution to (\ref{eqn:yzODE}) with $(y_0,z_0) \in D$, then it exists for all positive times as it remains in the compact subset $\overline D$. Thus, since $\dot y \leq 0$, the Poincar\'e--Bendixson Theorem implies that the omega limit must be $\{(0,0)\}$, consisting of the unique fixed point inside $D$. 
	\end{proof}
	\begin{rmk}
		Notice that $B_\varepsilon((1,1))\cap D \neq 0$ for all $\varepsilon > 0$, so there exist solutions starting arbitrarily close to the fixed point $(1,1)$ that converge to the fixed point $(0,0)$.
	\end{rmk}
	Since the solutions to (\ref{eqn:yzODE}) correspond to rescaling (\ref{eqn:xyzODE}) so that $x \equiv 1$, it is important to understand the precise asymptotic behaviour of $x$. To that end, we now state and prove the following Lemma:
	\begin{lem}
		\label{lem:xAsymp}
		Let $D$ be as in Theorem \ref{thm:stability} and $(x,y,z)$ a solution to (\ref{eqn:xyzODE}) with initial condition $(1,y_0,z_0) \in D$ on the interval $[0,T_{\max})$. Then $y,z < x$ and there exists a constant $C > 0$ such that 
		\[
		\frac{1}{C(T_{\max} - t)} < x < \frac{C}{T_{\max} - t},
		\]
		for all $t \in [0,T_{\max})$.
	\end{lem}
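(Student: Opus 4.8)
The plan is to extract $y<x$ and $z<x$ from the invariance of the region $D$ already established in the proof of Theorem \ref{thm:stability}, then to show $x\to\infty$ as $t\to T_{\max}$, and finally to squeeze $x$ between explicit constant multiples of $(T_{\max}-t)^{-1}$ by a one-line comparison on $\dot x/x^2$. For the first point, note that by the previous proposition the rescaled curve $t\mapsto\bigl(y(t)/x(t),\,z(t)/x(t)\bigr)$ is, after a monotone reparametrisation of time, a genuine solution of (\ref{eqn:yzODE}); since $x(0)=1$, its initial value is precisely $(y_0,z_0)\in D$. As $D$ is forward-invariant for (\ref{eqn:yzODE}) — this is shown inside the proof of Theorem \ref{thm:stability} — the curve remains in $D$ for all $t\in[0,T_{\max})$. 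One then observes that $D\subseteq(0,1)\times(0,1)$: indeed $\tfrac{z^2(n+1)}{z^2+n}\le y<1$ forces $z^2(n+1)<z^2+n$, i.e.\ $z<1$. Hence $y/x<1$ and $z/x<1$ throughout, that is $y<x$ and $z<x$.

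For the blow-up, since $\dot x=nx^2+z^2>0$ and $\dot y=(n+1)z^2>0$ the functions $x$ and $y$ are strictly increasing, and $z<x$ by the previous step; so were $x$ to stay bounded on $[0,T_{\max})$, all of $x,y,z$ would remain in a compact subset of $\R_{>0}^3$ and the (polynomial) ODE would extend the solution past the finite time $T_{\max}$ (finiteness from Proposition \ref{prop:finTimeEx}), contradicting maximality. Hence $x(t)\to\infty$, and by monotonicity $1/x(t)\to0$ as $t\to T_{\max}$. Now $0<z<x$ gives the pointwise strict bounds $n<\dot x/x^2=n+z^2/x^2<n+1$; integrating over $[t,T_{\max})$ and using $1/x\to0$ yields $n(T_{\max}-t)<1/x(t)<(n+1)(T_{\max}-t)$, i.e.\ $\tfrac{1}{(n+1)(T_{\max}-t)}<x(t)<\tfrac{1}{n(T_{\max}-t)}$. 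Taking $C:=n+1$ and using $1/n\le n+1$ gives the asserted two-sided bound.

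I expect the only non-routine ingredient to be the first step, and even there the real work is done: one simply invokes the forward-invariance of $D$ from the proof of Theorem \ref{thm:stability} together with the transfer of solutions under the rescaling. The remaining steps are a standard non-extendability argument and an elementary integration, so I do not anticipate a genuine obstacle; the only care needed is to keep every inequality strict — which it is, since $z>0$ and $z<x$ strictly — so that $C=n+1$ suffices. (A self-contained alternative for $z<x$ would be to examine the sign of $\dot x-\dot z$ at a hypothetical first time $x$ and $z$ meet, but this brings in the relative sizes of $y$ and $z$ and is messier than quoting invariance of $D$.)
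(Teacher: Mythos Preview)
Your proof is correct and follows essentially the same path as the paper's: invariance of $D$ gives $y,z<x$, a non-extendability argument forces $x\to\infty$, and the two-sided bound on $(x^{-1})'=-n-z^2/x^2$ is then integrated against the endpoint condition $x^{-1}\to0$. Your final step is in fact slightly tidier than the paper's, which invokes the convergence $z/x\to0$ from Theorem~\ref{thm:stability} to argue that $|(x^{-1})'|$ stays bounded and away from zero, whereas you obtain the sharper pointwise bounds $n< -(x^{-1})' <n+1$ directly from $0<z/x<1$ and hence the explicit constant $C=n+1$.
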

	\begin{proof}
		Since $D \subset (0,1)\times (0,1)$ is an invariant subset of (\ref{eqn:xyzODE}), we have that $y,z < x$ as claimed. It follows that $x \to \infty$ as $t \to T_{\max}$. Applying $z < x$ to equation (\ref{eqn:xyzODE}) yields $\dot x < (n+1)x^2$ which implies $x < (1-(n+1)t)^{-1}$ by comparison. In particular $T_{\max} \geq (n+1)^{-1}$. Clearly $x^{-1} \to 0$ as $t\to T_{\max}$. We also have that $(x^{-1})' = -n - \frac{z^2}{x^2} \to -n$ as $t\to T_{\max}$, since $\frac{z}{x} \to 0$ by Theorem \ref{thm:stability}. Thus, since $|(x^{-1})'|$ stays bounded and away from $0$ as $t\to T_{\max}$, there exists $C > 0$ such that $C^{-1}(T_{\max}-t) < x^{-1} < C(T_{\max}-t)$ as required.
	\end{proof}
	\subsubsection{Convergence to a non-trivial soliton}
	We now study the asymptotics of $\HCFp$ solutions given by solutions to (\ref{eqn:yzODE}) which converge to the origin. To do this, we start with a key Lemma.
	\begin{lem}\label{lem:asymyz}
		Suppose $(y,z)$ is a $C^1$ solution to (\ref{eqn:yzODE}) on $[0,\infty)$ such that $(y,z) \to (0,0)$. Then
		\[
			\frac{z^2}{y} \to \frac{n^2-2}{n(n+1)}.
		\]
		as $t \to \infty$.
	\end{lem}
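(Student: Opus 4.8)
The plan is to collapse the two–dimensional system (\ref{eqn:yzODE}) to a single scalar equation for the ``slaving'' ratio $w := z^2/y$. Heuristically, along a solution tending to the origin one has $\dot z/z \to -\tfrac1n$, so $z \sim e^{-t/n}$ and $z^2 \sim e^{-2t/n}$, while $\dot y \approx -ny + (n+1)z^2$; since $2/n < n$ for $n\ge 2$, the decay of $y$ is driven by the forcing term $z^2$, so $y$ should be comparable to $z^2$ with a definite constant. Concretely, a direct computation differentiating $w$ and substituting (\ref{eqn:yzODE}) yields an identity of the form
\[
\dot w \;=\; w\left(\frac{n^2-2}{n} - (n+1)\,w + \delta(t)\right), \qquad \delta(t) := \frac{2(n+1)}{n}\,y - 2z^2 + w\,y,
\]
that is, a logistic-type scalar ODE perturbed by $\delta(t)$. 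For $n\ge 2$ the linear coefficient $c:=\tfrac{n^2-2}{n}$ is strictly positive, the quadratic coefficient is $a:=n+1$, and the unique positive equilibrium of the unperturbed equation is $c/a = \tfrac{n^2-2}{n(n+1)}$ — precisely the claimed limit. (We use $y,z>0$ along the solution; if $z$ vanished at one time it would vanish identically by uniqueness, which is incompatible with the hypothesis.)

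The first step is to show $w$ is bounded on $[0,\infty)$, so that $\delta(t)\to 0$. Since $(y,z)\to(0,0)$, choose $T_0$ with $y(t)<1$ for $t\ge T_0$; then $-(n+1)w^2 + w^2 y \le -nw^2$ and $-2wz^2\le 0$, so on $[T_0,\infty)$ we have $\dot w \le w\bigl(c + \tfrac{2(n+1)}{n} - nw\bigr)$, which is negative once $w > \tfrac{n+2}{n}$. Combined with continuity on the compact interval $[0,T_0]$, this bounds $w$; as $y,z\to 0$ and $w$ is bounded, $\delta(t)\to 0$.

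The second step is a trapping (phase-line) argument for the perturbed logistic equation. Fix a small $\varepsilon>0$, pick $T_1$ with $|\delta(t)|<\varepsilon$ for $t\ge T_1$, and set $\varepsilon' := 2\varepsilon/(n+1)$, so that $0<\varepsilon'<c/a$ once $\varepsilon$ is small. For $0<w<c/a-\varepsilon'$ one has $c-(n+1)w+\delta > a\varepsilon' - \varepsilon = \varepsilon > 0$, hence $\dot w > \varepsilon\,w > 0$; for $w>c/a+\varepsilon'$ one has $c-(n+1)w+\delta < -a\varepsilon'+\varepsilon = -\varepsilon < 0$, hence $\dot w < -\varepsilon\,w < 0$. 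Thus on $[T_1,\infty)$ the solution is driven monotonically into the interval $[c/a-\varepsilon',\,c/a+\varepsilon']$ — in finite time, since the drift is bounded away from $0$ there (using that $w$ stays positive and monotone, together with the a priori bound of Step~1) — and it cannot leave the interval, because $\dot w$ points inward at both endpoints. Hence $c/a-\varepsilon' \le \liminf_t w \le \limsup_t w \le c/a+\varepsilon'$, and letting $\varepsilon\downarrow 0$ gives $w\to c/a = \tfrac{n^2-2}{n(n+1)}$, as claimed.

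The main obstacle is the coupling in Step~1: because $\delta$ depends on $w$ itself through the term $w\,y$, one cannot simply treat the $w$-equation as an autonomous logistic ODE with a prescribed time-dependent forcing and quote a standard asymptotic result; one must first establish the a priori upper bound on $w$, which is exactly what renders the $w\,y$ contribution negligible. The positivity $c>0$, i.e.\ $n\ge 2$, is equally essential — it is what prevents $w$ from collapsing to the degenerate value $0$ in Step~2. Once these two points are secured, the remainder is the elementary one-variable analysis above, and the explicit constant comes out as the equilibrium $c/a$.
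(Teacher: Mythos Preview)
Your proof is correct and follows essentially the same route as the paper: both compute the logarithmic derivative of $w=z^2/y$, obtain a perturbed logistic equation $\dot w = w\bigl(\tfrac{n^2-2}{n}-(n+1)w+\delta\bigr)$ with $\delta\to 0$, and then use a comparison/trapping argument to force $w\to \tfrac{n^2-2}{n(n+1)}$. One small simplification worth noting: since $wy=z^2$ identically, your perturbation $\delta=\tfrac{2(n+1)}{n}y-2z^2+wy$ equals $\tfrac{2(n+1)}{n}y-z^2$, which is exactly the paper's error term and does not involve $w$ at all; hence the ``coupling obstacle'' you flag is illusory and Step~1 (the a priori bound on $w$) is not actually needed to conclude $\delta\to 0$.
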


	\begin{proof}
		From (\ref{eqn:yzODE}), the quantity $\frac{z^2}{y}$ satisfies
		\[
		\begin{split}
		\frac{\mathrm{d}}{\mathrm{d}t}\ln\left(\frac{z^2}{y}\right) 
%=& 4\frac{n + 1}{n}(n-1 + y) - 4(n+z^2) - 2\frac{z^2}{y}(n+1-y) + 2n\\
%		=& 4\frac{n + 1}{n}(n-1 + y) - 2(n+1)\frac{z^2}{y} - 2(n+z^2)\\
		=& (n+1)\left(\frac{n^2-2}{n(n+1)}- \frac{z^2}{y}\right) + 2\frac{n+1}{n}y - z^2.
		\end{split}
		\]
		It follows that for all $t \geq 0$,
		\[
		(n+1)\left(\frac{n^2-2}{n(n+1)}- \frac{z^2}{y}\right) - z^2 \leq \ln\left(\frac{z^2}{y}\right)' \leq  (n+1)\left(\frac{n^2-2}{n(n+1)}- \frac{z^2}{y}\right) + 2\frac{n+1}{n}y.
		\]
		Fix $0 < \varepsilon < \frac{n^2-2}{n(n+1)}$ and define $D_\varepsilon = \{t \geq 0  : 2\frac{n+1}{n}|y| + |z|^2 < \varepsilon(n+1)\}$ which contains a subset of the form $[r,\infty)$ for some $r > 0$ since $y,z \to 0$. It then follows from the differential inequality that
		\[
		(n+1)\frac{z^2}{y}\left(\frac{n^2 - 2}{n(n+1)} - \varepsilon - \frac{z^2}{y}\right)\leq\left(\frac{z^2}{y}\right)' \leq 	(n+1)\frac{z^2}{y}\left(\frac{n^2 - 2}{n(n+1)} + \varepsilon - \frac{z^2}{y}\right)
		\]
		on $D_\varepsilon$. By comparison, and noting that $f = c > 0$ is a global attractor of the ODE $f' = (n+1)f(c-f)$ on $\R_{>0}$, we see that $\liminf_{t\to\infty}\frac{z^2}{y} \geq \frac{n^2-2}{n(n+1)} - \varepsilon$ and $\limsup_{t\to\infty}\frac{z^2}{y} \leq \frac{n^2-2}{n(n+1)} + \varepsilon$. Sending $\varepsilon \to 0$ gives the result.
		\end{proof}
	
		It will be helpful to consider the flow from the varying brackets point of view. To that end, for $y,z > 0$, let $h_{y,z} := \sigma^{1/2}_{1,y,z}$ and $\mu_{y,z} := h_{y,z}\cdot [\cdot,\cdot]$. It follows that
		\[
		h_{y,z}\colon  (\f g,[\cdot,\cdot], \bgm{\sigma_{1,y,z}\cdot}{\cdot} = \bgm{h_{y,z}\cdot}{h_{y,z}\cdot})\to (\f g, \mu_{y,z}, \bgm{\cdot}{\cdot})
		\]
		is an isomorphism of complex metric Lie algebras. Moreover, this implies that the unique simply-connected complex Lie groups with left-invariant metrics corresponding to $(\f g,[\cdot,\cdot], \bgm{\sigma_{1,y,z}\cdot}{\cdot})$ and $(\f g, \mu_{y,z}, \bgm{\cdot}{\cdot})$ respectively are biholomorphically, equivariantly isometric. Let $P_{y,z} := P_{\mu_{y,z}}^{\bgm{\cdot}{\cdot}}$ be the torsion-twisted Chern--Ricci operator associated to this latter group. By Lemma \ref{lem:PEquiv},
		\[
		P_{y,z} = h_{y,z}P^{\bgm{\sigma_{1,y,z}\cdot}{\cdot}}h_{y,z}^{-1} = P^{\bgm{\sigma_{1,y,z}\cdot}{\cdot}}_{[\cdot,\cdot]}.
		\]
		We can now compute the explicit form of $\mu_{y,z}$.
		\begin{prop}\label{prop:muInf}
			For all $y,z > 0$,
			\[
			\begin{split}
			\mu_{y,z} = [\cdot,\cdot]\Big|_{\f{sl}_n(\C)\wedge\f{sl}_n(\C)} &+ [\cdot,\cdot]\Big|_{\f{sl}_n(\C) \wedge \f s} + \sqrt{y}[\cdot,\cdot]\Big|_{\C I \wedge \f s} \\&+ z\Pr{}_{\f {sl}_n(\C)} \circ [\cdot,\cdot]\Big|_{\f s \wedge \f s}+ \frac{z}{\sqrt{y}} \Pr{}_{\C I}\circ[\cdot,\cdot]\Big|_{\f s \wedge \f s},
			\end{split}
			\]
			where $\Pr{}_V$ denotes orthogonal projection onto any subspace $V \subset \f g$.
		\end{prop}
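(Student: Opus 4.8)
The plan is to compute $\mu_{y,z}=h_{y,z}\cdot[\cdot,\cdot]$ directly from the definition of the $\mathsf{GL}(\f g,J)$-action, $h_{y,z}\cdot[\cdot,\cdot]=h_{y,z}[h_{y,z}^{-1}\cdot,h_{y,z}^{-1}\cdot]$, taking advantage of the fact that $h_{y,z}=\sigma_{1,y,z}^{1/2}$ is diagonal with respect to the decomposition $\f{sl}_{n+1}(\C)=\f{sl}_n(\C)\oplus\C I\oplus\f s$: it acts as the identity on $\f{sl}_n(\C)$, as multiplication by $y^{-1/2}$ on $\C I$, and as multiplication by $z^{-1/2}$ on $\f s$ (so that $h_{y,z}^{-1}$ scales by $1$, $y^{1/2}$, $z^{1/2}$ respectively). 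First I would treat the pairs of summands one at a time, using the bracket relations (\ref{eqn:slnBracketRelations}) and (\ref{eqn:otherBracketRelations}) to identify in which summand each bracket lands, and hence which scalar $h_{y,z}$ contributes; the summand $\C I$ is one-dimensional, so $\Lambda^2\C I=0$ and only five pairs are relevant.

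For $X,Y\in\f{sl}_n(\C)$ one has $[X,Y]\in\f{sl}_n(\C)$, on which $h_{y,z}$ is the identity, so this restriction is unchanged. For $X\in\f{sl}_n(\C)$ and $W\in\f s$, $[X,W]\in\f s$, and the factor $z^{1/2}$ coming from $h_{y,z}^{-1}W$ cancels the factor $z^{-1/2}$ from applying $h_{y,z}$, leaving $[X,W]$. The pair $\f{sl}_n(\C)\wedge\C I$ contributes nothing, since $I$ commutes with the $\f{sl}_n(\C)$-block. For $W\in\f s$ we have $[I,W]\in\f s$, so the scalars combine to $y^{1/2}z^{1/2}\cdot z^{-1/2}=\sqrt y$, producing the $\C I\wedge\f s$ term $\sqrt y\,[\cdot,\cdot]$.

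The one case needing a little care — and the step I would flag as the crux — is $\f s\wedge\f s$: by (\ref{eqn:otherBracketRelations}) a bracket $[W,W']$ with $W,W'\in\f s$ lies in $\f{sl}_n(\C)\oplus\C I$, so it must be decomposed as $[W,W']=\Pr{}_{\f{sl}_n(\C)}[W,W']+\Pr{}_{\C I}[W,W']$, on which $h_{y,z}$ acts by the distinct scalars $1$ and $y^{-1/2}$, while $h_{y,z}^{-1}$ applied to the two arguments contributes an overall factor $z^{1/2}\cdot z^{1/2}=z$. This gives $z\,\Pr{}_{\f{sl}_n(\C)}[W,W']+(z/\sqrt y)\,\Pr{}_{\C I}[W,W']$, i.e. the last two terms of the stated formula. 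Collecting the five contributions yields the claimed expression for $\mu_{y,z}$; there is no analytic difficulty here, only careful bookkeeping of which $h_{y,z}$-eigenvalue acts on each summand.
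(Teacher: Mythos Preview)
Your proposal is correct and follows exactly the same approach as the paper: the paper's proof simply recalls that $h_{y,z}=\id_{\f{sl}_n(\C)}+y^{-1/2}\id_{\C I}+z^{-1/2}\id_{\f s}$ and states that the result follows immediately from the bracket relations (\ref{eqn:slnBracketRelations}), (\ref{eqn:otherBracketRelations}) and the definition of $\mu_{y,z}$. You have spelled out the bookkeeping that the paper leaves implicit, including the observation that $[\f{sl}_n(\C),\C I]=0$ and the need to split $[\f s,\f s]$ into its $\f{sl}_n(\C)$ and $\C I$ components before applying $h_{y,z}$.
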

		\begin{proof}
			Recalling that $h_{y,z} = \id_{\f{sl}_n(\C)} + y^{-1/2}\id_{\C I} + z^{-1/2}\id_{\f s}$, the result immediately follows from (\ref{eqn:slnBracketRelations}), (\ref{eqn:otherBracketRelations}), and the definition of $\mu_{y,z}$.
		\end{proof}
		\begin{cor}
			\label{cor:limitBracketSLn}
			Let $(y,z)$ be a solution to (\ref{eqn:yzODE}) for which $y,z \to 0$, then $\mu_{y,z} \in \f g \otimes \Lambda^2 \f g^*$ converges as $t \to \infty$ to a bracket $\mu_\infty \in \f g \otimes \Lambda^2 \f g^*$ given by
			\[
			\begin{split}
				\mu_\infty := 	 [\cdot,\cdot]\Big|_{\f{sl}_n(\C)\wedge\f{sl}_n(\C)} &+ [\cdot,\cdot]\Big|_{\f{sl}_n(\C) \wedge \f s} + \sqrt{\frac{n^2-2}{n(n+1)}} \Pr{}_{\C I} \circ[\cdot,\cdot]\Big|_{\f s \wedge \f s}.
			\end{split}
			\]
			Moreover, the torsion-twisted Chern-Ricci operator $P_\infty = \lim_{t\to\infty}P_{y,z}$ of $\mu_\infty$ is given by
			\[
			P_\infty = n\id - \frac{1}{n}D,
			\]
			where $D := 2\id_{\C I} +\id_{\f s} \in \operatorname{Der}(\f g,\mu_\infty)$ is a derivation of the Lie bracket $\mu_\infty$. In particular, the simply-connected complex Lie group with left-invariant Hermitian metric corresponding to $(\f g,\mu_\infty,\bgm{\cdot}{\cdot})$ is a shrinking algebraic $\HCFp$-soliton.
		\end{cor}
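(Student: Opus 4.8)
The plan is to read off the convergence $\mu_{y,z}\to\mu_\infty$ directly from the closed formula of Proposition \ref{prop:muInf}, then to compute the limiting operator from Lemma \ref{lem:P} and recognise it as an algebraic soliton. Since $\mu_{y,z}=h_{y,z}\cdot[\cdot,\cdot]$ lies in the $\mathsf{GL}(\f g,J)$-orbit of $[\cdot,\cdot]$, it lies in the subvariety $\vcla(\f g,J)$, which is cut out of $\f g\otimes\Lambda^2\f g^*$ by the (polynomial) Jacobi identity and the (linear) condition $\mu(J\cdot,\cdot)=J\mu$, hence is closed; so any limit of $\mu_{y,z}$ is again a complex Lie bracket and the only content in the first assertion is the value of the limiting coefficients. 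By Proposition \ref{prop:muInf}, $\mu_{y,z}$ is a sum of five fixed tensors with coefficients $1$, $1$, $\sqrt y$, $z$ and $z/\sqrt y$. Along a solution of (\ref{eqn:yzODE}) with $(y,z)\to(0,0)$ one has $\sqrt y\to 0$ and $z\to 0$, killing the $\C I\wedge\f s$ term and the $\Pr{}_{\f{sl}_n(\C)}\circ[\cdot,\cdot]|_{\f s\wedge\f s}$ term, while $z/\sqrt y=\sqrt{z^2/y}\to\sqrt{(n^2-2)/(n(n+1))}$ by Lemma \ref{lem:asymyz}. As $\f g\otimes\Lambda^2\f g^*$ is finite dimensional this is ordinary coefficient-wise convergence, and the limit is exactly the stated $\mu_\infty$ (whose nonzero brackets are the $\f{sl}_n(\C)$-action on itself and on $\f s$, together with $[\f s,\f s]\subseteq\C I$, i.e.\ $\mu_\infty$ realises $\f{sl}_n(\C)\ltimes\f h_{2n+1}(\C)$).

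Next I would identify $P_\infty$. By the discussion preceding Proposition \ref{prop:muInf}, $P_{y,z}=P^{\bgm{\sigma_{1,y,z}\cdot}{\cdot}}_{[\cdot,\cdot]}=P_{1,y,z}$, so specialising Lemma \ref{lem:P} to $x=1$ gives
\[
P_{y,z}=(n+z^2)\,\id_{\f{sl}_n(\C)}+(n+1)\tfrac{z^2}{y}\,\id_{\C I}+\tfrac{n+1}{n}\big((n-1)+y\big)\,\id_{\f s}.
\]
Letting $t\to\infty$ and substituting $z\to 0$, $y\to 0$ and $z^2/y\to(n^2-2)/(n(n+1))$ yields
\[
P_\infty=n\,\id_{\f{sl}_n(\C)}+\tfrac{n^2-2}{n}\,\id_{\C I}+\tfrac{n^2-1}{n}\,\id_{\f s}.
\]
Because $\mu\mapsto P^{\bgm{\cdot}{\cdot}}_\mu$ is, by (\ref{eqn:TTCR}), a fixed quadratic polynomial in $\mu$ (the inner product $\bgm{\cdot}{\cdot}$ and a unitary frame being held fixed), it is continuous, so $P_\infty$ genuinely is the torsion-twisted Chern--Ricci operator of $\mu_\infty$. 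Comparing eigenvalues on each of the three summands gives $P_\infty=n\,\id-\tfrac1n D$ with $D:=2\,\id_{\C I}+\id_{\f s}$.

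It then remains to verify $D\in\operatorname{Der}(\f g,\mu_\infty)$ and to conclude. For the derivation identity $D\mu_\infty(u,v)=\mu_\infty(Du,v)+\mu_\infty(u,Dv)$ I would run through each pair of summands of $\f g=\f{sl}_n(\C)\oplus\C I\oplus\f s$: $D$ has eigenvalues $0$, $2$, $1$ on these pieces, and from the shape of $\mu_\infty$ one has $\mu_\infty(\f{sl}_n(\C)\wedge\f{sl}_n(\C))\subseteq\f{sl}_n(\C)$, $\mu_\infty(\f{sl}_n(\C)\wedge\f s)\subseteq\f s$, $\mu_\infty(\f s\wedge\f s)\subseteq\C I$, and $\mu_\infty$ annihilates any pair involving $\C I$; the identity then reduces to scalar bookkeeping ($0=0+0$ on $\f{sl}_n(\C)\wedge\f{sl}_n(\C)$, $1=0+1$ on $\f{sl}_n(\C)\wedge\f s$, $2=1+1$ on $\f s\wedge\f s$, and $0=0$ on all pairs involving $\C I$). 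Finally, $D$ is a real scalar on each of the mutually $\bgm{\cdot}{\cdot}$-orthogonal subspaces $\f{sl}_n(\C)$, $\C I$, $\f s$, hence $D^*=D$; therefore $-\tfrac1n D\in\operatorname{Der}(\f g,\mu_\infty)$ is self-adjoint and
\[
P^{\bgm{\cdot}{\cdot}}_{\mu_\infty}=n\,\id+\tfrac12\big((-\tfrac1n D)+(-\tfrac1n D)^*\big),
\]
which by the definition in Section \ref{sec:prelims} exhibits $(\f g,\mu_\infty,\bgm{\cdot}{\cdot})$ as an algebraic $\HCFp$-soliton; the constant $\lambda=n>0$ makes it shrinking. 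I do not anticipate a genuine obstacle here: all the analytic input already lives in Lemmas \ref{lem:P}, \ref{lem:PEquiv} and \ref{lem:asymyz}, and the only points needing care are that the convergence $P_{y,z}\to P_{\mu_\infty}$ is legitimate (continuity of $\mu\mapsto P_\mu$) and that it is precisely the limiting ratio $z^2/y\to(n^2-2)/(n(n+1))$ of Lemma \ref{lem:asymyz} which forces the limiting operator to differ from $n\,\id$ by a derivation.
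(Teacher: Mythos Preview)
Your proposal is correct and follows essentially the same route as the paper: both deduce $\mu_{y,z}\to\mu_\infty$ from Proposition~\ref{prop:muInf} together with Lemma~\ref{lem:asymyz}, compute $P_\infty$ by specialising Lemma~\ref{lem:P} to $x=1$ and passing to the limit, and then check that $D=2\,\id_{\C I}+\id_{\f s}$ is a derivation. Your write-up is in fact more detailed than the paper's (explicitly noting closedness of $\vcla(\f g,J)$, continuity of $\mu\mapsto P_\mu$, the eigenvalue bookkeeping for the derivation identity, and $D^*=D$ for the algebraic condition), but there is no substantive difference in approach.
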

	\begin{proof}
		The claim that $\mu_{y,z} \to \mu_\infty$ is an immediate consequence of Proposition \ref{prop:muInf} and Lemma \ref{lem:asymyz}. By Lemma \ref{lem:P} and again Lemma \ref{lem:asymyz}, we see that
		\[
		P_\infty = n\id_{\f{sl}_n(\C)} + \frac{n^2 - 2}{n}\id_{\C I} + \frac{n^2-1}{n}\id_{\f s} = n\id - \frac{1}{n}D.
		\]
		Finally, it is a straightforward computation to check that $D \in \operatorname{Der}(\f g,\mu_\infty)$. 
	\end{proof}
	The next Lemma describes the structure of the Lie algebra $(\f g,\mu_\infty)$.
	\begin{lem}
		\label{lem:limitAlgebra}
		The Lie algebra $(\f g,\mu_\infty) = \f{sl}_n(\C) \ltimes \f{h}_{2n + 1}$ is given by a semi-direct product, where $\f{h}_{2n+1}$ is the complex Heisenberg Lie algebra. 
		%Hence the simply-connected complex Lie group corresponding to this Lie algebra is ${SL_n(\C)}\ltimes H_{2n+1}(\C)$.
	\end{lem}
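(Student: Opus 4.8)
The plan is to recognise $\f h := \C I \oplus \f s$ as a Heisenberg ideal of $(\f g,\mu_\infty)$ with $\f{sl}_n(\C)$ as a complementary subalgebra, and then to pin down the isomorphism type of $\f h$ by inspecting the surviving structure constants.

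First I would read off, from Corollary \ref{cor:limitBracketSLn} together with the bracket relations (\ref{eqn:slnBracketRelations})--(\ref{eqn:otherBracketRelations}) and the formula (\ref{eqn:risjRels}), that $\f{sl}_n(\C)$ is a $\mu_\infty$-subalgebra (its self-bracket is unchanged in the limit) and that $\f h = \C I \oplus \f s$ is a $\mu_\infty$-ideal: one has $\mu_\infty(\f{sl}_n(\C),\f s) = [\f{sl}_n(\C),\f s]\subset\f s$, then $\mu_\infty(\f{sl}_n(\C),\C I) = 0$ since $I$ commutes with block-diagonal matrices, then $\mu_\infty(\C I,\f g)\subset\mu_\infty(\C I,\f s) = 0$ because the $\sqrt y\,[\cdot,\cdot]|_{\C I\wedge\f s}$ term of $\mu_{y,z}$ has disappeared in the limit $y\to 0$, and finally $\mu_\infty(\f s,\f s)\subset\C I$ by construction. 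Since $\f g = \f{sl}_n(\C)\oplus\f h$ as vector spaces, it follows at once that $(\f g,\mu_\infty)$ is the semidirect product $\f{sl}_n(\C)\ltimes\f h$, the $\f{sl}_n(\C)$-action on $\f h$ being $A\mapsto\mu_\infty(A,\cdot)$; that this action is by derivations of $(\f h,\mu_\infty|_{\f h\wedge\f h})$ is automatic from the Jacobi identity, which holds since $\mathcal{C}(\f g,J)$ is closed and $\mu_\infty = \lim_{t\to\infty}\mu_{y,z}$ lies in it.

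It then remains to identify $(\f h,\mu_\infty|_{\f h\wedge\f h})$ with the complex Heisenberg algebra $\f h_{2n+1}$ of Example \ref{ex:heisenberg}. Using (\ref{eqn:risjRels}) and the coefficient from Corollary \ref{cor:limitBracketSLn} one computes $\mu_\infty(r_i,s_j) = \delta_{ij}\sqrt{\tfrac{n^2-2}{n(n+1)}}\,\sqrt{\tfrac{n+1}{n}}\,I = \delta_{ij}\tfrac{\sqrt{n^2-2}}{n}\,I$, while $\mu_\infty(r_i,r_j) = \mu_\infty(s_i,s_j) = 0$ and $\mu_\infty$ vanishes on $\C I\wedge\f h$. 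For $n\ge 2$ the constant $c := \tfrac{\sqrt{n^2-2}}{n}$ is strictly positive, so $\f h$ is $2$-step nilpotent with one-dimensional centre $\C I$ and nondegenerate bracket pairing $\f s\times\f s\to\C I$; replacing $I$ by $c^{-1}I$ turns the relations into the defining relations of $\f h_{2n+1}$, which gives $\f h\cong\f h_{2n+1}$ and hence $(\f g,\mu_\infty)\cong\f{sl}_n(\C)\ltimes\f h_{2n+1}$.

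There is no genuine obstacle here; the only points requiring attention are the two algebraic facts that make the identification exact -- that the $\C I\wedge\f s$ component of $\mu_\infty$ is zero (so that $I$ is truly central in $\f h$ and $\f h$ is an ideal, not merely an $\f{sl}_n(\C)$-invariant subspace) and that the residual coefficient $\sqrt{(n^2-2)/(n(n+1))}$ is nonzero precisely because $n\ge 2$ (so that $\f h$ is genuinely Heisenberg rather than abelian). Both are immediate consequences of Lemma \ref{lem:asymyz} and (\ref{eqn:risjRels}).
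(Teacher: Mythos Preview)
Your proof is correct and follows essentially the same route as the paper's: both identify $(\C I\oplus\f s,\mu_\infty)$ with $\f h_{2n+1}$ by computing $\mu_\infty(r_i,s_j)=\tfrac{\sqrt{n^2-2}}{n}\delta_{ij}I$ from (\ref{eqn:risjRels}) and the coefficient in Corollary \ref{cor:limitBracketSLn}, and noting $\mu_\infty(I,\f s)=0$. Your write-up is in fact more complete than the paper's, since you explicitly verify that $\f h$ is a $\mu_\infty$-ideal and $\f{sl}_n(\C)$ a complementary subalgebra (justifying the semidirect-product claim in the statement) and you flag the $n\ge 2$ requirement for the coefficient to be nonzero; the paper leaves these points implicit.
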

	\begin{proof}
		We will show that $(\C I \oplus \f s,\mu_\infty)$ is isomorphic to $\f h_{2n+1}$. Indeed, if $\{r_i,s_i\}_{i=1}^n$ denotes the basis defined in (\ref{eqn:basis}), then by (\ref{eqn:risjRels}), $
		\mu_{\infty}(r_i,s_j) = \frac{\sqrt{n^2-2}}{n}\delta_{ij}I$. Moreover, $\mu_{\infty}(I,\f s) = \{0\}$. These are precisely the bracket relations for the Heisenberg Lie algebra $\f{h}_{2n+1}$. 
	\end{proof}

	\begin{proof}[Proof of Theorem \ref{thm:SLnUnstable}]
		Let $n\geq 2$ and $\varepsilon > 0$. Consider $({\mathsf{SL}_n(\C)},g^{\mathsf{SU}(n+1)})$, where $g^{\mathsf{SU}(n+1)}$ is as in Definition \ref{def:canonicalMetric}. Note that in the notation of Section \ref{sec:intro}, $g^{\operatorname{can}} = g^{\mathsf{SU}(n+1)}$ up to pull-back by an inner automorphism. Then, $g^{\mathsf{SU}(n+1)}_e = 2(n+1)\bgm{\sigma_{1,1,1}\cdot}{\cdot}$. Take $(y_0,z_0) \in B_{\varepsilon}((1,1))\cap D$, where $D$ is as in Theorem \ref{thm:stability}. Let $(x,y,z)$ solve (\ref{eqn:xyzODE}) on the interval $[0,T_{\max})$ with initial condition $(1,y_0,z_0)$. The corresponding left-invariant $\HCFp$ solution is $g_t = \bgm{\sigma_{x,y,z}\cdot}{\cdot}$ by Corollary \ref{cor:HCFpEquiv}. By Corollary \ref{cor:limitBracketSLn}, $\mu_{\frac yx,\frac zx} \to \mu_{\infty}$ as $t\to T_{\max}$. Now, by Lemma \ref{lem:xAsymp}, $x \sim (T_{\max} - t)^{-1}$ as $t\to T_{\max}$. Hence, for any sequence of times $t_n \to T_{\max}$, after passing to a subsequence, $\mu_{y(t_n)(T_{\max} - t_n),z(t_n)(T_{\max}-t_n)} \to c \mu_\infty$ as $n\to \infty$ for some $c\in \R$. By making $\varepsilon$ arbitrarily small, the theorem follows from \cite[~Corollary 6.20]{lauretConvHomMfds} and Lemma \ref{lem:limitAlgebra} since $(T_{\max}-t_n)^{-1} g_{t_n} = \bgm{\sigma_{x(t_n)(T_{\max} - t_n),y(t_n)(T_{\max} - t_n),z(t_n)(T_{\max} - t_n)}\cdot}{\cdot}$.
	\end{proof}

	%		Suppose for the sake of contradiction that $l := \liminf_{t\to\infty}\frac{z^2}{y} < \frac{n^2-2}{n(n+1)}$. Let $\delta > 0$ be such that $l + \delta < \frac{n^2-2}{n(n+1)}$ and define the half-ray $D_\delta := \{t \geq 0  : 2C^2e^{-2rt} < 2\delta(n+1)\}$.	
	%		Finally, let $(t_1,t_2) \subset D_\delta$ be an interval on which $\frac{z^2}{y} < \frac{n^2 - 2}{n(n+1)} - \delta$ and such that $\frac{z(t_1)^2}{y(t_1)} = \frac{n^2-2}{n(n+1)}- \delta$. Then $\left(\frac{z^2}{y}\right)'|_{t=t_1} < 0$, but the differential inequality implies that at $t_1$
	%		\[
	%		\ln\left(\frac{z^2}{y}\right)' \geq 
	%		\]
	\section{Perfect solitons}
	\label{sec:perfSol}
	In this section, we construct new examples of $\HCFp$-solitons on simply-connected, perfect Lie groups. Let $(\f h,\mu, J_0,\bgm{\cdot}{\cdot}_0)$ be a complex Lie algebra with Lie bracket $\mu \in \f h \otimes \Lambda^2 \f h^*$, complex structure $J_0$ and $J_0-$Hermitian inner product $\bgm{\cdot}{\cdot}_0$ that is $\HCFp$-static. Without loss of generality, we rescale the inner-product so that $P^{\bgm{\cdot}{\cdot}_0} = \mu\mu^* = \id$. By Theorem \ref{thm:staticImpPerf}, the Lie algebra $(\f h,\mu)$ is perfect. That is, $\mu(\f h \wedge \f h ) = \f h$. Define
	\[
	(\f g,\nu) := (\f h,\mu)\ltimes(\f h,0),
	\]
	where the first factor acts via the adjoint representation on the second.  Explicitly, for $X,Y \in \f h$,
	\[
	\nu(X\oplus 0,Y\oplus 0) = \mu(X,Y)\oplus 0,\qquad \nu(X\oplus 0,0\oplus Y) = 0\oplus \mu(X,Y).
	\]
%	\[
%	\begin{split}
%	\nu\left(\begin{pmatrix}X\\0\end{pmatrix},\begin{pmatrix}Y\\0\end{pmatrix}\right) =\begin{pmatrix}\mu(X,Y)\\0\end{pmatrix},\quad \nu\left(\begin{pmatrix}X\\0\end{pmatrix},\begin{pmatrix}0\\Y\end{pmatrix}\right) =\begin{pmatrix}0\\\mu(X,Y)\end{pmatrix}.
%	\end{split} 
%	\]
	We denote by $\bgm{\cdot}{\cdot}$ the unique inner product on $\f g$ induced by $\bgm{\cdot}{\cdot}_0$ making the two factors orthogonal and define $J := J_0 \oplus J_0$. Consider the embedding $\R^\times \ltimes \R \subset \mathsf{GL}(\f g,J)$ given by
	\[
	(a,b)\mapsto h_{a,b} := \begin{pmatrix}a&b\\0&1\end{pmatrix},
	\]
	where the blocks correspond to the splitting $\f h \oplus \f h$ and by scalar entries we mean scalar multiples of the identity. Define $\nu_{a,b} := h_{a,b}\cdot \nu \in  \R^\times \ltimes \R\cdot \nu$. Then for $X,Y \in \f h$,
	\[
	\begin{split}
		\nu_{a,b}(X\oplus0,Y\oplus 0) =& a^{-1}\mu(X,Y)\oplus 0,\\ \nu_{a,b}(X\oplus 0,0\oplus Y) =& 0\oplus a^{-1}\mu(X,Y), \text{ and} \\ \nu_{a,b}(0\oplus X,0\oplus Y) =& -\frac{b^2}{2a}\mu(X,Y)\oplus \frac{-2b}{a}\mu(X,Y).
		\end{split}
	\]
%	\[
%		\begin{split}
%			\nu_{a,b}\left(\begin{pmatrix}X\\0\end{pmatrix},\begin{pmatrix}Y\\0\end{pmatrix}\right) =\begin{pmatrix}a^{-1}\mu(X,Y)\\0\end{pmatrix},
%			&\quad\nu_{a,b}\left(\begin{pmatrix}X\\0\end{pmatrix},\begin{pmatrix}0\\Y\end{pmatrix}\right) =\begin{pmatrix}0\\a^{-1}\mu(X,Y)\end{pmatrix},\\
%			\nu_{a,b}\left(\begin{pmatrix}0\\X\end{pmatrix},\begin{pmatrix}0\\Y\end{pmatrix}\right) &= \begin{pmatrix}-\frac{b^2}{a} \mu(X,Y)\\-\frac{2b}{a}\mu(X,Y)\end{pmatrix}.
%		\end{split}
%	\]
	We can now compute the torsion-twisted Chern--Ricci operator $P_{\nu_{a,b}}$.
	\begin{prop}
		For all $(a,b) \in \R^\times \ltimes \R$,
		\[
		P_{\nu_{a,b}} = \begin{pmatrix}a^{-2}+ 2 a^{-2}b^4&4a^{-2}b^3\\4a^{-2}b^3&2a^{-2}+8a^{-2}b^2\end{pmatrix}
		\]
		
	\end{prop}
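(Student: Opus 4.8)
The plan is to compute $P_{\nu_{a,b}}$ directly from the formula \eqref{eqn:TTCR}, namely $P_\mu = \frac12\sum_i \operatorname{ad}_{Z_i}\circ\operatorname{ad}_{Z_i}^*$ where $\{Z_i\}$ is a $\bgm{\cdot}{\cdot}$-unitary frame, applied to the explicit bracket $\nu_{a,b}$ described just before the statement. First I would fix a $\bgm{\cdot}{\cdot}_0$-unitary basis $\{e_i\}$ of $\f h$ and use $\{e_i\oplus 0,\,0\oplus e_i\}$ as a unitary basis of $\f g=\f h\oplus\f h$ (the two factors are orthogonal by construction). The key structural input is that $(\f h,\mu)$ is perfect and normalized so that $\mu\mu^\ast=\operatorname{Id}_{\f h}$, i.e. $\sum_i \operatorname{ad}_{e_i}^{\mu}\circ(\operatorname{ad}_{e_i}^{\mu})^\ast=\operatorname{Id}_{\f h}$ where the adjoints are taken in $(\f h,\bgm{\cdot}{\cdot}_0)$; this is precisely the $\HCFp$-static normalization assumed at the start of Section~\ref{sec:perfSol}.

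The main computation splits into the four block-contributions coming from the three types of brackets of $\nu_{a,b}$: the $\f h\wedge\f h\to\f h$ (first factor) piece scaled by $a^{-1}$, the $\f h\wedge\f h\to\f h$ (second factor, mixed arguments) piece scaled by $a^{-1}$, and the $\f h\wedge\f h\to\f h\oplus\f h$ piece on the second factor scaled by $-\tfrac{b^2}{2a}\oplus\tfrac{-2b}{a}$. For each unitary basis vector $Z$ of $\f g$ I would write out $\operatorname{ad}_Z$ as a $2\times2$ block operator on $\f h\oplus\f h$ with entries built from $\operatorname{ad}^{\mu}$, take the $\bgm{\cdot}{\cdot}$-adjoint block by block (adjoint of the operators on $\f h$ being $(\operatorname{ad}^{\mu})^\ast$), compose, and sum over the $2\dim_{\C}\f h$ basis vectors. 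Every such sum collapses via the identity $\sum_i\operatorname{ad}^{\mu}_{e_i}(\operatorname{ad}^{\mu}_{e_i})^\ast=\operatorname{Id}$ (and its adjoint form $\sum_i(\operatorname{ad}^{\mu}_{e_i})^\ast\operatorname{ad}^{\mu}_{e_i}$, which by the static property is also a multiple of the identity — indeed one checks it equals $\operatorname{Id}$ since $\operatorname{tr}(\mu\mu^\ast)=\operatorname{tr}(\mu^\ast\mu)$ and the Jacobi identity forces the relevant endomorphism to be scalar on a perfect algebra, matching the $g^{\mathsf K}$ situation), leaving only the scalar bookkeeping of the powers of $a$ and $b$. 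Collecting the $(1,1)$, $(1,2)=(2,1)$, and $(2,2)$ entries then yields $a^{-2}+2a^{-2}b^4$, $4a^{-2}b^3$, and $2a^{-2}+8a^{-2}b^2$ respectively.

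The part I expect to require the most care is keeping the cross-terms straight: $\operatorname{ad}_{0\oplus X}$ acts nontrivially only into the second factor but its output involves $\mu$, so its adjoint feeds back into the first factor, generating the off-diagonal $b^3$ term and part of the $b^4$ and $b^2$ diagonal terms; one must be careful that the three bracket components of $\nu_{a,b}$ acting on a given pair of basis vectors are summed coherently before taking adjoints, and that the factor $\tfrac12$ in \eqref{eqn:TTCR} together with the doubling from summing over both copies of each $e_i$ is tracked correctly. An alternative, perhaps cleaner, route is to invoke Lemma~\ref{lem:PEquiv}: since $\nu_{a,b}=h_{a,b}\cdot\nu$, we have $P_{\nu_{a,b}}=h_{a,b}\,P^{h_{a,b}^{-1}\cdot\bgm{\cdot}{\cdot}}_{\nu}\,h_{a,b}^{-1}$, so it suffices to compute $P^{k}_\nu$ for the inner product $k=h_{a,b}^{-1}\cdot\bgm{\cdot}{\cdot}$ once and conjugate; this reduces the dependence on $(a,b)$ to explicit matrix algebra with the fixed semidirect-product bracket $\nu$. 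Either way the verification is a finite, representation-theory-free calculation, and I would present it via the first (direct) method with the static normalization doing all the heavy lifting.
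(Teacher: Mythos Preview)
Your overall plan is essentially the paper's: compute $P_{\nu_{a,b}}$ directly from a unitary basis $\{e_i\oplus 0,\,0\oplus e_i\}$ of $\f g$ and collapse every sum using the static normalisation $\mu\mu^*=\id_{\f h}$. The paper does this via the quadratic form $\bgm{P_{\nu_{a,b}}X}{X}=\tfrac12\sum_{\alpha,\beta}|\bgm{\nu_{a,b}(E_\alpha,E_\beta)}{X}|^2$, whereas you propose the equivalent operator form $P=\tfrac12\sum_\alpha \ad_{E_\alpha}\ad_{E_\alpha}^*$; either route works.

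There is, however, a genuine wrinkle in your write-up. You assert that you will also need $\sum_i(\ad^{\mu}_{e_i})^*\ad^{\mu}_{e_i}$ to be a scalar multiple of the identity, and justify this by a vague appeal to the Jacobi identity on a perfect algebra. Two problems: first, this identity is \emph{not needed}. If you actually write each $\ad^{\nu_{a,b}}_{E_\alpha}$ as a $2\times 2$ block matrix with entries proportional to $\ad^{\mu}_{e_i}$ (for instance $\ad^{\nu_{a,b}}_{0\oplus e_i}=a^{-1}\begin{pmatrix}0&-b^2\ad_{e_i}^\mu\\ \ad_{e_i}^\mu&-2b\,\ad_{e_i}^\mu\end{pmatrix}$), then in $\ad_{E_\alpha}\ad_{E_\alpha}^*$ every block is a scalar multiple of $\ad^{\mu}_{e_i}(\ad^{\mu}_{e_i})^*$; the combination $(\ad^{\mu}_{e_i})^*\ad^{\mu}_{e_i}$ never appears. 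Summing over $i$ then uses only $\sum_i\ad^{\mu}_{e_i}(\ad^{\mu}_{e_i})^*=2\,\id_{\f h}$, i.e.\ exactly the hypothesis $\mu\mu^*=\id$. Second, your justification for $\sum_i(\ad^{\mu}_{e_i})^*\ad^{\mu}_{e_i}$ being scalar is not valid: perfectness together with $\mu\mu^*=\id$ does not force this operator to be scalar by any Jacobi-identity argument (it is true for the Killing-form metric on a semisimple algebra because the Casimir is central, but the proposition is stated for an arbitrary static $(\f h,\mu,\bgm{\cdot}{\cdot}_0)$). So drop that claim entirely; the computation closes without it. The paper's quadratic-form version makes this transparent, since each summand factors through inner products $\bgm{\mu(e_i,e_j)}{X_k}$ and the sum over $(i,j)$ is exactly $\bgm{\mu\mu^*X_k}{X_\ell}=\bgm{X_k}{X_\ell}$.
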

	\begin{proof}
		Let $X = X_1 \oplus X_2 \in \f h \oplus \f h$. Let $\{e_i\}_{i=1}^n$ be a unitary basis of $\f h$. Then, using the structure of $\nu_{a,b}$ and the fact that $\mu\mu^* = \id$,
		\[
			\begin{split}
				\bgm{P_{\nu_{a,b}}X}{X} =& \sum_{i<j}|\bgm{\nu_{a,b}(e_i\oplus 0,e_j\oplus 0)}{X}|^2 + \sum_{i,j}|\bgm{\nu_{a,b}(e_i\oplus 0,0\oplus e_j)}{X}|^2 \\
				&+\sum_{i<j}|\bgm{\nu_{a,b}(0\oplus e_i,0\oplus e_j)}{X}|^2\\
				=& a^{-2}\|X_1\|^2 + 2a^{-2}\|X_2\|^2 + \sum_{i,j} \left|\frac{b^2}{a}\bgm{\mu(e_i,e_j)}{X_1}+ \frac{2b}{a}\bgm{\mu(e_i,e_j)}{X_2}\right|^2\\
				=& a^{-2}\|X_1\|^2 + 2a^{-2}\|X_2\|^2 + 2 a^{-2}b^4\|X_1\|^2\\ &+ 4a^{-2}b^3(\bgm{X_1}{X_2}+\bgm{X_2}{X_1}) + 8a^{-2}b^2\|X_2\|^2\\
				=& \left\langle\begin{pmatrix}a^{-2}+ 2 a^{-2}b^4&4a^{-2}b^3\\4a^{-2}b^3&2a^{-2}+8b^2a^{-2}\end{pmatrix}X,X\right\rangle,
			\end{split}
		\]
		and the result follows since $P_{\nu_{a,b}}$ is $\bgm{\cdot}{\cdot}$-Hermitian.
	\end{proof}
	We now investigate the existence of solitons in this orbit. Note that $P_{\nu_{a,b}} = a^{-2}P_{\nu_{1,b}}$, so it suffices to study $P_{\nu_t}$, where $\nu_t := \nu_{1,t}$.
%	\begin{lem}
%		The operator given by orthogonal projection onto the second factor,
%		\[
%			\begin{pmatrix}0&0\\0&1\end{pmatrix} \in \f{gl}(\f h \oplus \f h)
%		\]
%		is a derivation of $\nu$.
%	\end{lem}
%	
	\begin{thm}
		\label{thm:PerfAlsSols}
		The following hold:
		\begin{enumerate}
			\item $\nu_t$ is a semi-algebraic soliton  bracket for $t \in \{0,2^{-1/4},-2^{-1/4}\}$ and $\nu_0$ is algebraic.
			\item $\nu_{\pm 2^{-1/4}}$ are not algebraic.
			\item $\nu_{-2^{-1/4}} \in \mathsf{U}(\f g,J,\bgm{\cdot}{\cdot})\cdot \nu_{2^{-1/4}}$.
		\end{enumerate}
	\end{thm}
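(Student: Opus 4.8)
The plan is to work throughout with the rescaled brackets $\nu_t := \nu_{1,t}$, so that the preceding Proposition reads, in block form adapted to the splitting $\f g = \f h \oplus \f h$,
\[
P_{\nu_t} = \begin{pmatrix} (1+2t^4)\,\id_{\f h} & 4t^3\,\id_{\f h} \\ 4t^3\,\id_{\f h} & (2+8t^2)\,\id_{\f h}\end{pmatrix}.
\]
The first step is to extract what we need about $\operatorname{Der}(\f g,\nu_t)$. Writing a derivation of $\nu$ as a block matrix $\bigl(\begin{smallmatrix}A & B\\ C & E\end{smallmatrix}\bigr)$ and evaluating the derivation identity on pairs from $\f h\oplus 0$ and $0\oplus\f h$ in all three combinations, one reads off that the lower-left block $C$ must lie in $\operatorname{Der}(\f h,\mu)$, and one checks directly that $D_0 := 0\oplus \id_{\f h}$ is a derivation of $\nu$. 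Since $\mathsf{Aut}(\f g,J,\nu_t) = h_{1,t}\,\mathsf{Aut}(\f g,J,\nu)\,h_{1,t}^{-1}$ we get $\operatorname{Der}(\f g,\nu_t) = h_{1,t}\operatorname{Der}(\f g,\nu)h_{1,t}^{-1}$, and the elementary identity $h_{1,t}\bigl(\begin{smallmatrix}A & B\\ C & E\end{smallmatrix}\bigr)h_{1,t}^{-1} = \bigl(\begin{smallmatrix}A+tC & \ast\\ C & E-tC\end{smallmatrix}\bigr)$ shows that the lower-left block of every element of $\operatorname{Der}(\f g,\nu_t)$ is again a derivation of $(\f h,\mu)$.

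For part (1) I would set $D := 8t^2\,h_{1,t}D_0h_{1,t}^{-1} = \bigl(\begin{smallmatrix}0 & 8t^3\,\id_{\f h}\\ 0 & 8t^2\,\id_{\f h}\end{smallmatrix}\bigr)\in \operatorname{Der}(\f g,\nu_t)$. Because $t$ is real and $\bgm{\cdot}{\cdot} = \bgm{\cdot}{\cdot}_0\oplus\bgm{\cdot}{\cdot}_0$ is orthogonal on the two summands, $D^* = \bigl(\begin{smallmatrix}0 & 0\\ 8t^3\,\id_{\f h} & 8t^2\,\id_{\f h}\end{smallmatrix}\bigr)$ and
\[
P_{\nu_t} - \tfrac12\bigl(D + D^*\bigr) = \begin{pmatrix} (1+2t^4)\,\id_{\f h} & 0\\ 0 & 2\,\id_{\f h}\end{pmatrix},
\]
which is a scalar multiple of $\id_{\f g}$ exactly when $t^4 = \tfrac12$, i.e. $t = \pm 2^{-1/4}$, in which case $\lambda = 2$; thus $\nu_{\pm 2^{-1/4}}$ are semi-algebraic soliton brackets. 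For $t = 0$ one has $P_{\nu_0} = \id_{\f g} + D_0$ with $D_0 \in \operatorname{Der}(\f g,\nu)$, and $D_0$ is $\bgm{\cdot}{\cdot}$-self-adjoint by orthogonality of the splitting, so $\nu_0$ is algebraic.

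For part (2), suppose $\nu_t$ were algebraic. Then $D, D^* \in \operatorname{Der}(\f g,\nu_t)$, and since this is a linear subspace of $\f{gl}(\f g,J)$ its element $\tfrac12(D+D^*) = P_{\nu_t} - \lambda\,\id_{\f g}$ would be a derivation of $\nu_t$. But the lower-left block of $P_{\nu_t} - \lambda\,\id_{\f g}$ is $4t^3\,\id_{\f h}$, which has non-zero trace for $t\neq 0$, whereas by the first paragraph the lower-left block of any derivation of $\nu_t$ lies in $\operatorname{Der}(\f h,\mu)$ — a space of trace-free endomorphisms whenever $\f h$ is semisimple, since then all its derivations are inner. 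As Theorem \ref{thm:TwoSolitons} only asks for the existence of one example, take $\mathsf{H}$ semisimple (e.g. $\mathsf{SL}_2(\C)$); then $\nu_{\pm 2^{-1/4}}$ are not algebraic.

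For part (3) I would exhibit $u := \id_{\f h}\oplus(-\id_{\f h})$: it commutes with $J = J_0\oplus J_0$ and preserves $\bgm{\cdot}{\cdot} = \bgm{\cdot}{\cdot}_0\oplus\bgm{\cdot}{\cdot}_0$, so $u\in \mathsf{U}(\f g,J,\bgm{\cdot}{\cdot})$, and substituting $u$ into the explicit bracket formulas for $\nu_{a,b}$ on the three types of pairs gives $u\cdot\nu_{a,b} = \nu_{a,-b}$, so in particular $\nu_{-2^{-1/4}} = u\cdot\nu_{2^{-1/4}} \in \mathsf{U}(\f g,J,\bgm{\cdot}{\cdot})\cdot\nu_{2^{-1/4}}$. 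The step I expect to require the most care is the analysis of $\operatorname{Der}(\f g,\nu_t)$ in the first paragraph — specifically, establishing that the off-diagonal block of every derivation is trace-free, which is exactly what drives the non-algebraicity in (2); the block-adjoint bookkeeping for $(\cdot)^*$ in (1)–(2) and the identity $u\cdot\nu_{a,b}=\nu_{a,-b}$ in (3) are routine.
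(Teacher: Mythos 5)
Your proof is correct, and parts (1) and (3) follow essentially the paper's route: the paper likewise conjugates the derivation $\bigl(\begin{smallmatrix}0&0\\0&1\end{smallmatrix}\bigr)$ of $\nu$ by $h_{1,t}$ to get $D_t$, splits $P_{\nu_t}$ into a diagonal part plus a multiple of $D_t+D_t^*$ (so that the semi-algebraic condition becomes $1+2t^4=2$), and for (3) uses the same unitary $\id_{\f h}\oplus(-\id_{\f h})$, via the identity $k h_{1,-1}=h_{1,1}k$ rather than your direct check on the bracket formulas. The genuine difference is in (2). The paper computes $h_t^{-1}D_t^*h_t = \bigl(\begin{smallmatrix}-t^2&-t(1+t^2)\\t&1+t^2\end{smallmatrix}\bigr)$ and observes it fails to preserve the nilradical of $\nu$ for $t\neq 0$, which works for any perfect $\f h$. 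You instead prove the structural fact that the lower-left block of every element of $\operatorname{Der}(\f g,\nu_t)$ lies in $\operatorname{Der}(\f h,\mu)$, and then exclude the block $4t^3\id_{\f h}$ of $P_{\nu_t}-\lambda\id$ by a trace argument, which forces you to restrict to semisimple $\mathsf{H}$. That restriction is harmless for Theorem \ref{thm:TwoSolitons} (an existence statement), but it is not needed for Theorem \ref{thm:PerfAlsSols} as stated, and your own block lemma already gives the general case for free: a nonzero scalar multiple of $\id_{\f h}$ is never a derivation of a nonzero bracket, since $\pi(c\id_{\f h})\mu = -c\mu$ and $\mu\neq 0$ because $\f h$ is perfect and nontrivial. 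With that one-line replacement your argument for (2) covers the same generality as the paper's, and is arguably more robust, since it rules out algebraicity through \emph{any} realising derivation (only the symmetric part $P_{\nu_t}-\lambda\id$, which is independent of the choice of $D$, enters), rather than through the particular $D_t$.
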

	\begin{proof}
		It is straightforward to check that, $D := \begin{pmatrix}0&0\\0&1\end{pmatrix}$ is a derivation of $\nu$.  Thus, since $\operatorname{Der}(\f g,h\cdot\nu) = h\operatorname{Der}(\f g,\nu)h^{-1}$, if we define $h_t := h_{1,t}$, then
		\[
		D_t := h_tDh_t^{-1} = \begin{pmatrix}0&t\\0&1\end{pmatrix}
		\]
		is a derivation of $\nu_t = h_t \cdot \nu$ for all $t \in \R$. We can then write
		\[
		\begin{split}
			P_{\nu_t} =& \begin{pmatrix}1+2t^4&0\\0&2\end{pmatrix} + 4t^2\bigg(\begin{pmatrix}0&t\\0&1\end{pmatrix} + \begin{pmatrix}0&0\\t&1\end{pmatrix}\bigg)\\
			=& \begin{pmatrix}1+2t^4&0\\0&2\end{pmatrix} + 2t^2(D_t + D_t^*).
		\end{split}
		\]
		Thus, $P_{\nu_0} = \id + D$. Moreover, $P_{\nu_{\pm 2^{-1/4}}} = 2\id + D_{\pm 2^{-1/4}}+D_{\pm2^{-1/4}}^*$, so $\nu_{2^{-1/4}}$ and $\nu_{-2^{-1/4}}$ are both semi-algebraic solitons, so (1) follows. To see that they are not algebraic, it suffices to show that $D_t^*$ is not a derivation of $\nu_t$ for $t \neq 0$. To this end, observe that
		\[
		h_t^{-1}D_t^*h_t = \begin{pmatrix}1&-t\\0&1\end{pmatrix}\begin{pmatrix}0&0\\t&1\end{pmatrix}\begin{pmatrix}1&t\\0&1\end{pmatrix} = \begin{pmatrix}-t^2&-t(1+t^2)\\t&1+t^2\end{pmatrix},
		\]
		which does not preserve the nilradical of $\nu$ unless $t = 0$. So $D_t^*$ is not a derivation of $\nu_t$ for $t\neq 0$, which proves (2). To see (3), define $k := (1) \oplus (-1)$, which one can check lies in $\mathsf{U}(\f g,J,\bgm{\cdot}{\cdot})\cap \operatorname{Aut}(\f g,\nu)$, then $k\cdot \nu_{-1} = (kh_{-1})\cdot \nu = (h_1k)\cdot \nu = h_1\cdot \nu = \nu_1,$
		as required.
	\end{proof}
	For each $t$, let $(\mathsf{G}_t,g_t)$ be the unique simply-connected complex Lie group with left-invariant Hermitian metric corresponding to the infinitesimal data $(\f g,\nu_t,J,\bgm{\cdot}{\cdot})$. Clearly, $\mathsf{G}_t$ is isomorphic to $\mathsf{G}_\nu$, the simply-connected complex Lie group corresponding to $(\f g,\nu)$ for all $t\in\R$. The next result demonstrates non-uniqueness of solitons on $\mathsf{G}_\nu$.
%	Clearly $G_t$ is isomorphic as a complex Lie group to $G_s$ for all $t,s \in \R$. Item (1) of Theorem \ref{thm:PerfAlsSols} implies that $(G_1,g_1)$ is not equivariantly, biholomorphically isometric to $(G_0,g_0)$. They may still be equivalent as Hermitian manifolds (up to scaling). However, this is not the case as the next result shows.
	\begin{prop}
		\label{prop:distinctSols}
		$(\mathsf{G}_0,g_0)$ and $(\mathsf{G}_1,g_1)$ are not equivalent up to homothety.
	\end{prop}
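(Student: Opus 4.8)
To prove Proposition~\ref{prop:distinctSols}, the plan is to tell $(\mathsf{G}_0,g_0)$ and $(\mathsf{G}_1,g_1)$ apart by the eigenvalues of the torsion-twisted Chern--Ricci operator $P$, which form an invariant of the underlying Hermitian manifold that is merely rescaled under a homothety. The point is that $\Theta(g)$ is built intrinsically from the Chern connection, so $\phi^{*}\Theta(g')=\Theta(g)$ for any biholomorphic isometry $\phi\colon(M,J,g)\to(M',J',g')$, and that replacing $g$ by $cg$ with $c>0$ leaves the Chern connection, its curvature, and its torsion unchanged, so that $P=g^{-1}\Theta(g)$ is only multiplied by a positive constant. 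Hence any scale-invariant spectral quantity of $P$ --- the ratio of its distinct eigenvalues, or $\tr(P^{2})/(\tr P)^{2}$ --- is an invariant of $(M,J,g)$ up to homothety and biholomorphic isometry. Since $g_0$ and $g_1$ are left-invariant, the associated operators are left-invariant and equal at the identity to the operators $P_{\nu_0}$ and $P_{\nu_1}$ computed above.

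The remaining steps are short. First, specialise the formula for $P_{\nu_{a,b}}$ to $a=1$ and $b\in\{0,1\}$, obtaining in block form with respect to $\f g=\f h\oplus\f h$ (scalar entries meaning scalar multiples of $\id_{\f h}$)
\[
P_{\nu_0}=\begin{pmatrix}1&0\\0&2\end{pmatrix},\qquad
P_{\nu_1}=\begin{pmatrix}3&4\\4&10\end{pmatrix}.
\]
Second, read off the spectra: $P_{\nu_0}$ has exactly the two distinct eigenvalues $1$ and $2$, while $P_{\nu_1}$ is positive definite with exactly the two distinct eigenvalues $\tfrac{13\pm\sqrt{113}}{2}$, the roots of $\lambda^{2}-13\lambda+14$; in both cases the two eigenvalues occur with equal multiplicity $\dim_{\R}\f h$. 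Third, compare: were $(\mathsf{G}_0,g_0)$ and $(\mathsf{G}_1,g_1)$ equivalent up to homothety, these spectra would be proportional, so the eigenvalue ratios would coincide; but one ratio is $2$ and the other is $\tfrac{13+\sqrt{113}}{13-\sqrt{113}}=\tfrac{141+13\sqrt{113}}{28}$, which is irrational (since $113$ is not a perfect square) and in particular $\neq 2$. Alternatively, $\tr(P^{2})/(\tr P)^{2}$ equals $\tfrac{5}{9\dim_{\R}\f h}$ for $g_0$ but $\tfrac{141}{169\dim_{\R}\f h}$ for $g_1$, and $5/9\neq 141/169$. This contradiction proves the claim.

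I do not expect any real obstacle here. The only point worth a sentence of care is that the spectrum of $P$ up to scale is a genuine invariant for \emph{arbitrary} biholomorphic isometries, not merely automorphism-induced ones; this is immediate from the biholomorphism invariance of $\Theta$ exploited throughout the paper (cf.\ Lemma~\ref{lem:PEquiv}) together with its elementary scaling behaviour. Should one also wish to exclude non-holomorphic Riemannian isometries, the identical strategy applies with $\Theta$ replaced by a scale-invariant Riemannian curvature quantity of the underlying metrics; but the Hermitian statement is the pertinent one, and it makes transparent the contrast between the algebraic soliton $g_0$ and the metric $g_1$ lying in the same $\R^{\times}\ltimes\R$-orbit.
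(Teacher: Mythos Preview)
Your proof is correct and follows essentially the same strategy as the paper: both exploit that a biholomorphic homothety makes $P_{\nu_0}$ and $P_{\nu_1}$ conjugate up to a scalar, and then compare scale-invariant spectral data to reach a contradiction --- the paper uses $\tr$ and $\det$ where you use eigenvalue ratios (equivalently $\tr(P^2)/(\tr P)^2$). Your explicit values $\tr P_{\nu_1}=13$, $\det P_{\nu_1}=14$ are in fact the correct ones from the formula for $P_{\nu_{a,b}}$; the paper's stated values $8$ and $8$ appear to be typos, though the contradiction goes through either way.
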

	\begin{proof}
		Suppose $f\colon (\mathsf{G}_0,g_0) \to (\mathsf{G}_1,g_1)$ is a biholomorphism such that $f^*g_1 = \lambda^2 g_0$ for some $\lambda \in \R$. We can assume $f(e) = e$, since otherwise we can replace $f$ with $L_{f(e)^{-1}}\circ f$. Then biholomorphism invariance of the torsion twisted Chern--Ricci tensor implies $\lambda^{-2}P_{\nu_0} = \mathrm{d}f_e^{-1} P_{\nu_1}\mathrm{d}f_e$. Thus, $\lambda^{-2}\tr P_{\nu_0} = \tr P_{\nu_1}$ and $\lambda^{-2\dim \mathsf{G}_0}\det(P_{\nu_0}) = \det(P_{\nu_1})$. But $\tr P_{\nu_0} = 3$, $\tr P_{\nu_1} = 8$, $\det P_{\nu_0} = 2$ and $\det P_{\nu_1} = 8$. It follows that $\frac{8}{3} = (\frac{8}{2})^{\dim \mathsf{G}_0}$, which is a contradiction.
	\end{proof}
	Theorem \ref{thm:TwoSolitons} now follows from Theorem \ref{thm:PerfAlsSols} and Proposition \ref{prop:distinctSols}.
	
	It is natural to consider the stability of the solitons corresponding to $\nu_0$ and $\nu_{\pm 2^{-1/4}}$ under the $\HCFp$. Indeed, analysis of the \emph{gauged bracket flow} as constructed in \cite{arrLafHomPlu19} and \cite{bohmLafImmRicFl} on the orbit $\R^\times \ltimes \R \cdot \nu$ yields that the soliton corresponding to $\nu_0$ is unstable, and the one corresponding to $\nu_{\pm 2^{-1/4}}$ is an attractor for the (suitably normalised) $\HCFp$ restricted to metrics on the orbit $\R^\times\ltimes \R\cdot \bgm{\cdot}{\cdot} = \{h_{a,b}\cdot \bgm{\cdot}{\cdot}: (a,b)\in \R^\times \ltimes \R\}$.

\bibliography{Bibliography.bib}
\bibliographystyle{plain}

\end{document}